\documentclass[sn-mathphys-num]{sn-jnl}

\usepackage{graphicx}%
\usepackage{multirow}%
\usepackage{amsmath,amssymb,amsfonts,bm,mathtools}%
\usepackage{amsthm}%
\usepackage{mathrsfs}%
\usepackage[title]{appendix}%
\usepackage{xcolor}%
\usepackage{textcomp}%
\usepackage{manyfoot}%
\usepackage{booktabs}%
\usepackage{algorithm}%
\usepackage{algorithmicx}%
\usepackage{algpseudocode}%
\usepackage{listings}%
\usepackage{tikz}
\usepackage{tikz-cd}
\usepackage{pgfplots}
\usepackage{planarforest}
\newcommand{\forestA}{
\tikz[planar forest] {

\node [b] at (0.0, 0.0) {  } 
;
}}

\newcommand{\forestB}{
\tikz[planar forest] {

\node [b] at (0.0, 0.0) {  } 
child {node [b] at (0.0, 1.0) {  }  
child {node [b] at (-0.5, 1.0) {  }  
}
child {node [b] at (0.5, 1.0) {  }  
}
}
;
}}

\newcommand{\forestC}{
\tikz[planar forest] {

\node [b] at (0.0, 0.0) {  } 
child {node [b] at (-0.5, 1.0) {  }  
}
child {node [b] at (0.5, 1.0) {  }  
child {node [b] at (0.0, 1.0) {  }  
}
}
;
}}

\newcommand{\forestD}{
\tikz[planar forest] {

\node [b] at (0.0, 0.0) {  } 
child {node [b] at (-1.0, 1.0) {  }  
child {node [b] at (0.0, 1.0) {  }  
}
}
child {node [b] at (0.0, 1.0) {  }  
}
child {node [b] at (1.0, 1.0) {  }  
child {node [b] at (-0.5, 1.0) {  }  
}
child {node [b] at (0.5, 1.0) {  }  
}
}
;
}}

\newcommand{\forestE}{
\tikz[planar forest] {

\node [b] at (0.0, 0.0) {  } 
;
}}

\newcommand{\forestF}{
\tikz[planar forest] {

\node [b] at (0.0, 0.0) {  } 
child {node [b] at (-0.5, 1.0) {  }  
}
child {node [b] at (0.5, 1.0) {  }  
}
;
}}

\newcommand{\forestG}{
\tikz[planar forest] {

\node [b] at (0.0, 0.0) {  } 
child {node [b] at (0.0, 1.0) {  }  
child {node [b] at (-0.5, 1.0) {  }  
}
child {node [b] at (0.5, 1.0) {  }  
}
}
;
}}

\newcommand{\forestH}{
\tikz[planar forest] {

\node [b] at (0.0, 0.0) {  } 
;
\node [b] at (1.0, 0.0) {  } 
child {node [b] at (0.0, 1.0) {  }  
}
;
}}

\newcommand{\forestI}{
\tikz[planar forest] {

\node [b] at (0.0, 0.0) {  } 
child {node [b] at (-0.5, 1.0) {  }  
}
child {node [b] at (0.5, 1.0) {  }  
child {node [b] at (0.0, 1.0) {  }  
}
}
;
}}

\newcommand{\forestJ}{
\tikz[planar forest] {

\node [b] at (0.0, 0.0) {  } 
child {node [b] at (0.0, 1.0) {  }  
}
;
\node [b] at (1.0, 0.0) {  } 
;
\node [b] at (2.0, 0.0) {  } 
child {node [b] at (-0.5, 1.0) {  }  
}
child {node [b] at (0.5, 1.0) {  }  
}
;
}}

\newcommand{\forestK}{
\tikz[planar forest] {

\node [b] at (0.0, 0.0) {  } 
child {node [b] at (-1.0, 1.0) {  }  
child {node [b] at (0.0, 1.0) {  }  
}
}
child {node [b] at (0.0, 1.0) {  }  
}
child {node [b] at (1.0, 1.0) {  }  
child {node [b] at (-0.5, 1.0) {  }  
}
child {node [b] at (0.5, 1.0) {  }  
}
}
;
}}

\newcommand{\forestL}{
\tikz[planar forest] {

\node [b] at (0.0, 0.0) {  } 
;
}}

\newcommand{\forestM}{
\tikz[planar forest] {

\node [b] at (0.0, 0.0) {  } 
;
}}

\newcommand{\forestN}{
\tikz[planar forest] {

\node [b] at (0.0, 0.0) {  } 
;
}}

\newcommand{\forestO}{
\tikz[planar forest] {

\node [b] at (0.0, 0.0) {  } 
child {node [b] at (0.0, 1.0) {  }  
}
;
}}

\newcommand{\forestP}{
\tikz[planar forest] {

\node [b] at (0.0, 0.0) {  } 
child {node [b] at (-0.5, 1.0) {  }  
}
child {node [b] at (0.5, 1.0) {  }  
child {node [b] at (0.0, 1.0) {  }  
}
}
;
}}

\newcommand{\forestQ}{
\tikz[planar forest] {

\node [b] at (0.0, 0.0) {  } 
;
}}

\newcommand{\forestR}{
\tikz[planar forest] {

\node [b] at (0.0, 0.0) {  } 
child {node [b] at (0.0, 1.0) {  }  
}
;
}}

\newcommand{\forestS}{
\tikz[planar forest] {

\node [b] at (0.0, 0.0) {  } 
child {node [b] at (-0.5, 1.0) {  }  
}
child {node [b] at (0.5, 1.0) {  }  
}
;
}}

\newcommand{\forestT}{
\tikz[planar forest] {

\node [b] at (0.0, 0.0) {  } 
child {node [b] at (-1.0, 1.0) {  }  
}
child {node [b] at (0.0, 1.0) {  }  
}
child {node [b] at (1.0, 1.0) {  }  
}
;
}}

\newcommand{\forestU}{
\tikz[planar forest] {

\node [b] at (0.0, 0.0) {  } 
child {node [b] at (-1.5, 1.0) {  }  
}
child {node [b] at (-0.5, 1.0) {  }  
}
child {node [b] at (0.5, 1.0) {  }  
}
child {node [b] at (1.5, 1.0) {  }  
}
;
}}

\newcommand{\forestV}{
\tikz[planar forest] {

\node [b] at (0.0, 0.0) {  } 
child {node [b] at (0.0, 1.0) {  }  
child {node [b] at (0.0, 1.0) {  }  
}
}
;
}}

\newcommand{\forestW}{
\tikz[planar forest] {

\node [b] at (0.0, 0.0) {  } 
child {node [b] at (0.0, 1.0) {  }  
child {node [b] at (-0.5, 1.0) {  }  
}
child {node [b] at (0.5, 1.0) {  }  
}
}
;
}}

\newcommand{\forestX}{
\tikz[planar forest] {

\node [b] at (0.0, 0.0) {  } 
child {node [b] at (-0.5, 1.0) {  }  
}
child {node [b] at (0.5, 1.0) {  }  
child {node [b] at (0.0, 1.0) {  }  
}
}
;
}}

\newcommand{\forestY}{
\tikz[planar forest] {

\node [b] at (0.0, 0.0) {  } 
child {node [b] at (-1.0, 1.0) {  }  
child {node [b] at (0.0, 1.0) {  }  
}
}
child {node [b] at (0.0, 1.0) {  }  
}
child {node [b] at (1.0, 1.0) {  }  
}
;
}}

\newcommand{\forestAB}{
\tikz[planar forest] {

\node [b] at (0.0, 0.0) {  } 
child {node [b] at (-0.5, 1.0) {  }  
child {node [b] at (0.0, 1.0) {  }  
}
}
child {node [b] at (0.5, 1.0) {  }  
child {node [b] at (0.0, 1.0) {  }  
}
}
;
}}

\newcommand{\forestBB}{
\tikz[planar forest] {

\node [b] at (0.0, 0.0) {  } 
child {node [b] at (0.0, 1.0) {  }  
child {node [b] at (-1.0, 1.0) {  }  
}
child {node [b] at (0.0, 1.0) {  }  
}
child {node [b] at (1.0, 1.0) {  }  
}
}
;
}}

\newcommand{\forestCB}{
\tikz[planar forest] {

\node [b] at (0.0, 0.0) {  } 
child {node [b] at (-0.5, 1.0) {  }  
child {node [b] at (-0.5, 1.0) {  }  
}
child {node [b] at (0.5, 1.0) {  }  
}
}
child {node [b] at (0.5, 1.0) {  }  
}
;
}}

\newcommand{\forestDB}{
\tikz[planar forest] {

\node [b] at (0.0, 0.0) {  } 
child {node [b] at (0.0, 1.0) {  }  
child {node [b] at (0.0, 1.0) {  }  
child {node [b] at (0.0, 1.0) {  }  
}
}
}
;
}}

\newcommand{\forestEB}{
\tikz[planar forest] {

\node [b] at (0.0, 0.0) {  } 
child {node [b] at (0.0, 1.0) {  }  
child {node [b] at (0.0, 1.0) {  }  
child {node [b] at (-0.5, 1.0) {  }  
}
child {node [b] at (0.5, 1.0) {  }  
}
}
}
;
}}

\newcommand{\forestFB}{
\tikz[planar forest] {

\node [b] at (0.0, 0.0) {  } 
child {node [b] at (0.0, 1.0) {  }  
child {node [b] at (-0.5, 1.0) {  }  
child {node [b] at (0.0, 1.0) {  }  
}
}
child {node [b] at (0.5, 1.0) {  }  
}
}
;
}}

\newcommand{\forestGB}{
\tikz[planar forest] {

\node [b] at (0.0, 0.0) {  } 
child {node [b] at (-0.5, 1.0) {  }  
child {node [b] at (0.0, 1.0) {  }  
child {node [b] at (0.0, 1.0) {  }  
}
}
}
child {node [b] at (0.5, 1.0) {  }  
}
;
}}

\newcommand{\forestHB}{
\tikz[planar forest] {

\node [b] at (0.0, 0.0) {  } 
child {node [b] at (0.0, 1.0) {  }  
child {node [b] at (0.0, 1.0) {  }  
child {node [b] at (0.0, 1.0) {  }  
child {node [b] at (0.0, 1.0) {  }  
}
}
}
}
;
}}

\newcommand{\forestIB}{
\tikz[planar forest] {

\node [b] at (0.0, 0.0) {  } 
;
}}

\newcommand{\forestJB}{
\tikz[planar forest] {

\node [b] at (0.0, 0.0) {  } 
;
}}

\newcommand{\forestKB}{
\tikz[planar forest] {

\node [b] at (0.0, 0.0) {  } 
child {node [b] at (0.0, 1.0) {  }  
}
;
}}

\newcommand{\forestLB}{
\tikz[planar forest] {

\node [b] at (0.0, 0.0) {  } 
child {node [b] at (0.0, 1.0) {  }  
}
;
}}

\newcommand{\forestMB}{
\tikz[planar forest] {

\node [b] at (0.0, 0.0) {  } 
child {node [b] at (-0.5, 1.0) {  }  
}
child {node [b] at (0.5, 1.0) {  }  
}
;
}}

\newcommand{\forestNB}{
\tikz[planar forest] {

\node [b] at (0.0, 0.0) {  } 
child {node [b] at (-0.5, 1.0) {  }  
}
child {node [b] at (0.5, 1.0) {  }  
}
;
}}

\newcommand{\forestOB}{
\tikz[planar forest] {

\node [b] at (0.0, 0.0) {  } 
child {node [b] at (0.0, 1.0) {  }  
child {node [b] at (0.0, 1.0) {  }  
}
}
;
}}

\newcommand{\forestPB}{
\tikz[planar forest] {

\node [b] at (0.0, 0.0) {  } 
child {node [b] at (0.0, 1.0) {  }  
child {node [b] at (0.0, 1.0) {  }  
}
}
;
}}

\newcommand{\forestQB}{
\tikz[planar forest] {

\node [b] at (0.0, 0.0) {  } 
child {node [b] at (-1.0, 1.0) {  }  
}
child {node [l] at (0.0, 1.0) { $\cdot \cdot \cdot$ }  
}
child {node [b] at (1.0, 1.0) {  }  
}
;
}}

\newcommand{\forestRB}{
\tikz[planar forest] {

\node [b] at (0.0, 0.0) {  } 
child {node [b] at (-1.0, 1.0) {  }  
}
child {node [l] at (0.0, 1.0) { $\cdot \cdot \cdot$ }  
}
child {node [b] at (1.0, 1.0) {  }  
}
;
}}

\newcommand{\forestSB}{
\tikz[planar forest] {

\node [a] at (0.0, 0.0) {  } 
child {node [b] at (0.0, 1.0) {  }  
child {node [b] at (-1.0, 1.0) {  }  
}
child {node [l] at (0.0, 1.0) { $\cdot \cdot \cdot$ }  
}
child {node [b] at (1.0, 1.0) {  }  
}
}
;
}}

\newcommand{\forestTB}{
\tikz[planar forest] {

\node [a] at (0.0, 0.0) {  } 
child {node [b] at (-1.5, 1.0) {  }  
}
child {node [b] at (-0.5, 1.0) {  }  
}
child {node [l] at (0.5, 1.0) { $\cdot \cdot \cdot$ }  
}
child {node [b] at (1.5, 1.0) {  }  
}
;
}}

\newcommand{\forestUB}{
\tikz[planar forest] {

\node [a] at (0.0, 0.0) {  } 
;
}}

\newcommand{\forestVB}{
\tikz[planar forest] {

\node [b] at (0.0, 0.0) {  } 
child {node [b] at (-1.5, 1.0) {  }  
}
child {node [l] at (-0.5, 1.0) { $\cdot \cdot \cdot$ }  
}
child {node [b] at (0.5, 1.0) {  }  
}
child {node [b] at (1.5, 1.0) {  }  
child {node [a] at (0.0, 1.0) {  }  
}
}
;
}}

\newcommand{\forestWB}{
\tikz[planar forest] {

\node [b] at (0.0, 0.0) {  } 
child {node [a] at (0.0, 1.0) {  }  
}
;
}}

\newcommand{\forestXB}{
\tikz[planar forest] {

\node [b] at (0.0, 0.0) {  } 
child {node [b] at (-2.0, 1.0) {  }  
}
child {node [l] at (-1.0, 1.0) { $\cdot \cdot \cdot$ }  
}
child {node [b] at (0.0, 1.0) {  }  
}
child {node [b] at (1.0, 1.0) {  }  
}
child {node [a] at (2.0, 1.0) {  }  
}
;
}}

\newcommand{\forestYB}{
\tikz[planar forest] {

\node [a] at (0.0, 0.0) {  } 
;
}}

\newcommand{\forestAC}{
\tikz[planar forest] {

\node [b] at (0.0, 0.0) {  } 
;
}}

\newcommand{\forestBC}{
\tikz[planar forest] {

\node [b] at (0.0, 0.0) {  } 
child {node [b] at (0.0, 1.0) {  }  
}
;
}}

\newcommand{\forestCC}{
\tikz[planar forest] {

\node [b] at (0.0, 0.0) {  } 
child {node [b] at (0.0, 1.0) {  }  
child {node [b] at (0.0, 1.0) {  }  
}
}
;
}}

\newcommand{\forestDC}{
\tikz[planar forest] {

\node [b] at (0.0, 0.0) {  } 
child {node [b] at (0.0, 1.0) {  }  
child {node [b] at (0.0, 1.0) {  }  
child {node [b] at (0.0, 1.0) {  }  
}
}
}
;
}}

\newcommand{\forestEC}{
\tikz[planar forest] {

\node [b] at (0.0, 0.0) {  } 
child {node [b] at (0.0, 1.0) {  }  
child {node [b] at (0.0, 1.0) {  }  
child {node [b] at (0.0, 1.0) {  }  
child {node [b] at (0.0, 1.0) {  }  
}
}
}
}
;
}}

\newcommand{\forestFC}{
\tikz[planar forest] {

\node [b] at (0.0, 0.0) {  } 
child {node [l] at (0.0, 1.0) { $\cdot \cdot \cdot$ }  
child {node [b] at (0.0, 1.0) {  }  
}
}
;
}}

\newcommand{\forestGC}{
\tikz[planar forest] {

\node [b] at (0.0, 0.0) {  } 
child {node [l] at (0.0, 1.0) { $\cdot \cdot \cdot$ }  
child {node [b] at (0.0, 1.0) {  }  
}
}
;
}}

\newcommand{\forestHC}{
\tikz[planar forest] {

\node [b] at (0.0, 0.0) {  } 
child {node [l] at (0.0, 1.0) { $\cdot \cdot \cdot$ }  
child {node [b] at (0.0, 1.0) {  }  
}
}
;
}}

\newcommand{\forestIC}{
\tikz[planar forest] {

\node [b] at (0.0, 0.0) {  } 
child {node [l] at (0.0, 1.0) { $\cdot \cdot \cdot$ }  
child {node [b] at (0.0, 1.0) {  }  
}
}
;
}}

\newcommand{\forestJC}{
\tikz[planar forest] {

\node [b] at (0.0, 0.0) {  } 
child {node [l] at (0.0, 1.0) { $\cdot \cdot \cdot$ }  
child {node [b] at (0.0, 1.0) {  }  
}
}
;
}}

\newcommand{\forestKC}{
\tikz[planar forest] {

\node [b] at (0.0, 0.0) {  } 
child {node [l] at (0.0, 1.0) { $\cdot \cdot \cdot$ }  
child {node [b] at (0.0, 1.0) {  }  
}
}
;
}}

\newcommand{\forestLC}{
\tikz[planar forest] {

\node [b] at (0.0, 0.0) {  } 
child {node [l] at (0.0, 1.0) { $\cdot \cdot \cdot$ }  
child {node [b] at (0.0, 1.0) {  }  
}
}
;
}}

\newcommand{\forestMC}{
\tikz[planar forest] {

\node [b] at (0.0, 0.0) {  } 
child {node [l] at (0.0, 1.0) { $\cdot \cdot \cdot$ }  
child {node [b] at (0.0, 1.0) {  }  
}
}
;
}}

\newcommand{\forestNC}{
\tikz[planar forest] {

\node [b] at (0.0, 0.0) {  } 
child {node [l] at (0.0, 1.0) { $\cdot \cdot \cdot$ }  
child {node [b] at (0.0, 1.0) {  }  
}
}
;
}}

\newcommand{\forestOC}{
\tikz[planar forest] {

\node [b] at (0.0, 0.0) {  } 
child {node [l] at (0.0, 1.0) { $\cdot \cdot \cdot$ }  
child {node [b] at (0.0, 1.0) {  }  
}
}
;
}}

\newcommand{\forestPC}{
\tikz[planar forest] {

\node [b] at (0.0, 0.0) {  } 
child {node [l] at (0.0, 1.0) { $\cdot \cdot \cdot$ }  
child {node [b] at (0.0, 1.0) {  }  
}
}
;
}}

\newcommand{\forestQC}{
\tikz[planar forest] {

\node [b] at (0.0, 0.0) {  } 
child {node [l] at (0.0, 1.0) { $\cdot \cdot \cdot$ }  
child {node [b] at (0.0, 1.0) {  }  
}
}
;
}}

\newcommand{\forestRC}{
\tikz[planar forest] {

\node [b] at (0.0, 0.0) {  } 
child {node [l] at (0.0, 1.0) { $\cdot \cdot \cdot$ }  
child {node [b] at (0.0, 1.0) {  }  
}
}
;
}}

\newcommand{\forestSC}{
\tikz[planar forest] {

\node [b] at (0.0, 0.0) {  } 
child {node [l] at (0.0, 1.0) { $\cdot \cdot \cdot$ }  
child {node [b] at (0.0, 1.0) {  }  
}
}
;
}}

\newcommand{\forestTC}{
\tikz[planar forest] {

\node [b] at (0.0, 0.0) {  } 
child {node [l] at (0.0, 1.0) { $\cdot \cdot \cdot$ }  
child {node [b] at (0.0, 1.0) {  }  
}
}
;
}}

\newcommand{\forestUC}{
\tikz[planar forest] {

\node [b] at (0.0, 0.0) {  } 
child {node [l] at (0.0, 1.0) { $\cdot \cdot \cdot$ }  
child {node [b] at (0.0, 1.0) {  }  
}
}
;
}}

\newcommand{\forestVC}{
\tikz[planar forest] {

\node [b] at (0.0, 0.0) {  } 
child {node [l] at (0.0, 1.0) { $\cdot \cdot \cdot$ }  
child {node [b] at (0.0, 1.0) {  }  
}
}
;
}}

\newcommand{\forestWC}{
\tikz[planar forest] {

\node [b] at (0.0, 0.0) {  } 
child {node [l] at (0.0, 1.0) { $\cdot \cdot \cdot$ }  
child {node [b] at (0.0, 1.0) {  }  
}
}
;
}}

\newcommand{\forestXC}{
\tikz[planar forest] {

\node [b] at (0.0, 0.0) {  } 
child {node [l] at (0.0, 1.0) { $\cdot \cdot \cdot$ }  
child {node [b] at (0.0, 1.0) {  }  
}
}
;
}}

\newcommand{\forestYC}{
\tikz[planar forest] {

\node [b] at (0.0, 0.0) {  } 
child {node [l] at (0.0, 1.0) { $\cdot \cdot \cdot$ }  
child {node [b] at (0.0, 1.0) {  }  
}
}
;
}}

\newcommand{\forestAD}{
\tikz[planar forest] {

\node [b] at (0.0, 0.0) {  } 
child {node [b] at (0.0, 1.0) {  }  
}
;
}}

\newcommand{\forestBD}{
\tikz[planar forest] {

\node [b] at (0.0, 0.0) {  } 
child {node [b] at (0.0, 1.0) {  }  
}
;
}}

\newcommand{\forestCD}{
\tikz[planar forest] {

\node [b] at (0.0, 0.0) {  } 
child {node [b] at (0.0, 1.0) {  }  
}
;
}}

\newcommand{\forestDD}{
\tikz[planar forest] {

\node [b] at (0.0, 0.0) {  } 
;
}}

\newcommand{\forestED}{
\tikz[planar forest] {

\node [b] at (0.0, 0.0) {  } 
;
}}

\newcommand{\forestFD}{
\tikz[planar forest] {

\node [b] at (0.0, 0.0) {  } 
;
}}

\newcommand{\forestGD}{
\tikz[planar forest] {

\node [b] at (0.0, 0.0) {  } 
;
}}

\newcommand{\forestHD}{
\tikz[planar forest] {

\node [b] at (0.0, 0.0) {  } 
;
}}

\newcommand{\forestID}{
\tikz[planar forest] {

\node [b] at (0.0, 0.0) {  } 
child {node [b] at (0.0, 1.0) {  }  
}
;
}}

\newcommand{\forestJD}{
\tikz[planar forest] {

\node [b] at (0.0, 0.0) {  } 
;
}}

\newcommand{\forestKD}{
\tikz[planar forest] {

\node [b] at (0.0, 0.0) {  } 
;
\node [b] at (1.0, 0.0) {  } 
;
}}

\newcommand{\forestLD}{
\tikz[planar forest] {

\node [b] at (0.0, 0.0) {  } 
child {node [b] at (0.0, 1.0) {  }  
child {node [b] at (0.0, 1.0) {  }  
}
}
;
}}

\newcommand{\forestMD}{
\tikz[planar forest] {

\node [b] at (0.0, 0.0) {  } 
child {node [b] at (0.0, 1.0) {  }  
}
;
}}

\newcommand{\forestND}{
\tikz[planar forest] {

\node [b] at (0.0, 0.0) {  } 
child {node [b] at (0.0, 1.0) {  }  
child {node [b] at (0.0, 1.0) {  }  
}
}
;
}}

\newcommand{\forestOD}{
\tikz[planar forest] {

\node [b] at (0.0, 0.0) {  } 
child {node [b] at (0.0, 1.0) {  }  
child {node [b] at (0.0, 1.0) {  }  
}
}
;
}}

\newcommand{\forestPD}{
\tikz[planar forest] {

\node [b] at (0.0, 0.0) {  } 
child {node [b] at (0.0, 1.0) {  }  
child {node [b] at (0.0, 1.0) {  }  
}
}
;
}}

\newcommand{\forestQD}{
\tikz[planar forest] {

\node [b] at (0.0, 0.0) {  } 
child {node [b] at (0.0, 1.0) {  }  
child {node [b] at (0.0, 1.0) {  }  
}
}
;
}}

\newcommand{\forestRD}{
\tikz[planar forest] {

\node [b] at (0.0, 0.0) {  } 
child {node [b] at (0.0, 1.0) {  }  
}
;
}}

\newcommand{\forestSD}{
\tikz[planar forest] {

\node [b] at (0.0, 0.0) {  } 
child {node [b] at (0.0, 1.0) {  }  
}
;
}}

\newcommand{\forestTD}{
\tikz[planar forest] {

\node [b] at (0.0, 0.0) {  } 
child {node [b] at (0.0, 1.0) {  }  
}
;
}}

\newcommand{\forestUD}{
\tikz[planar forest] {

\node [b] at (0.0, 0.0) {  } 
;
\node [b] at (1.0, 0.0) {  } 
;
}}

\newcommand{\forestVD}{
\tikz[planar forest] {

\node [b] at (0.0, 0.0) {  } 
child {node [b] at (0.0, 1.0) {  }  
}
;
\node [ b] at (1.0, 0.0) {  } 
;
}}

\newcommand{\forestWD}{
\tikz[planar forest] {

\node [b] at (0.0, 0.0) {  } 
child {node [b] at (0.0, 1.0) {  }  
}
;
\node [ b] at (1.0, 0.0) {  } 
;
}}

\newcommand{\forestXD}{
\tikz[planar forest] {

\node [b] at (0.0, 0.0) {  } 
;
\node [b] at (1.0, 0.0) {  } 
;
}}

\newcommand{\forestYD}{
\tikz[planar forest] {

\node [b] at (0.0, 0.0) {  } 
;
\node [b] at (1.0, 0.0) {  } 
;
\node [b] at (2.0, 0.0) {  } 
;
}}

\newcommand{\forestAE}{
\tikz[planar forest] {

\node [b] at (0.0, 0.0) {  } 
child {node [b] at (0.0, 1.0) {  }  
}
;
\node [b] at (1.0, 0.0) {  } 
;
}}

\newcommand{\forestBE}{
\tikz[planar forest] {

\node [b] at (0.0, 0.0) {  } 
;
\node [b] at (1.0, 0.0) {  } 
;
}}

\newcommand{\forestCE}{
\tikz[planar forest] {

\node [b] at (0.0, 0.0) {  } 
;
\node [b] at (1.0, 0.0) {  } 
;
\node [b] at (2.0, 0.0) {  } 
;
}}

\newcommand{\forestDE}{
\tikz[planar forest] {

\node [b] at (0.0, 0.0) {  } 
child {node [b] at (0.0, 1.0) {  }  
}
;
\node [b] at (1.0, 0.0) {  } 
;
}}

\newcommand{\forestEE}{
\tikz[planar forest] {

\node [b] at (0.0, 0.0) {  } 
child {node [b] at (-0.5, 1.0) {  }  
}
child {node [b] at (0.5, 1.0) {  }  
}
;
}}

\newcommand{\forestFE}{
\tikz[planar forest] {

\node [b] at (0.0, 0.0) {  } 
;
\node [b] at (1.0, 0.0) {  } 
;
\node [b] at (2.0, 0.0) {  } 
;
}}

\newcommand{\forestGE}{
\tikz[planar forest] {

\node [b] at (0.0, 0.0) {  } 
child {node [b] at (0.0, 1.0) {  }  
}
;
\node [b] at (1.0, 0.0) {  } 
;
}}

\newcommand{\forestHE}{
\tikz[planar forest] {

\node [b] at (0.0, 0.0) {  } 
child {node [b] at (0.0, 1.0) {  }  
child {node [b] at (0.0, 1.0) {  }  
}
}
;
}}

\newcommand{\forestIE}{
\tikz[planar forest] {

\node [b] at (0.0, 0.0) {  } 
child {node [b] at (0.0, 1.0) {  }  
child {node [b] at (0.0, 1.0) {  }  
child {node [b] at (0.0, 1.0) {  }  
}
}
}
;
}}

\newcommand{\forestJE}{
\tikz[planar forest] {

\node [b] at (0.0, 0.0) {  } 
child {node [b] at (0.0, 1.0) {  }  
child {node [b] at (0.0, 1.0) {  }  
}
}
;
}}

\newcommand{\forestKE}{
\tikz[planar forest] {

\node [b] at (0.0, 0.0) {  } 
child {node [b] at (0.0, 1.0) {  }  
child {node [b] at (0.0, 1.0) {  }  
}
}
;
}}

\newcommand{\forestLE}{
\tikz[planar forest] {

\node [b] at (0.0, 0.0) {  } 
child {node [b] at (0.0, 1.0) {  }  
child {node [b] at (0.0, 1.0) {  }  
}
}
;
}}

\newcommand{\forestME}{
\tikz[planar forest] {

\node [b] at (0.0, 0.0) {  } 
child {node [b] at (0.0, 1.0) {  }  
child {node [b] at (0.0, 1.0) {  }  
}
}
;
}}

\newcommand{\forestNE}{
\tikz[planar forest] {

\node [b] at (0.0, 0.0) {  } 
child {node [b] at (0.0, 1.0) {  }  
}
;
}}

\newcommand{\forestOE}{
\tikz[planar forest] {

\node [b] at (0.0, 0.0) {  } 
child {node [b] at (0.0, 1.0) {  }  
}
;
}}

\newcommand{\forestPE}{
\tikz[planar forest] {

\node [b] at (0.0, 0.0) {  } 
child {node [b] at (0.0, 1.0) {  }  
}
;
}}

\newcommand{\forestQE}{
\tikz[planar forest] {

\node [b] at (0.0, 0.0) {  } 
child {node [b] at (0.0, 1.0) {  }  
}
;
}}

\definecolor{C0}{HTML}{1F77B4}
\definecolor{C0L}{HTML}{AEC7E8}
\definecolor{C1}{HTML}{FF7F0E}
\definecolor{C1L}{HTML}{FFBB78}
\definecolor{C2}{HTML}{2CA02C}
\definecolor{C2L}{HTML}{98DF8A}
\definecolor{C3}{HTML}{D62728}
\definecolor{C3L}{HTML}{FF9896}
\definecolor{C4}{HTML}{9467BD}
\definecolor{C4L}{HTML}{C5B0D5}
\definecolor{C5}{HTML}{8C564B}
\definecolor{C5L}{HTML}{C49C94}
\definecolor{C6}{HTML}{E377C2}
\definecolor{C6L}{HTML}{F7B6D2}
\definecolor{C7}{HTML}{7F7F7F}
\definecolor{C7L}{HTML}{C7C7C7}
\definecolor{C8}{HTML}{BCBD22}
\definecolor{C8L}{HTML}{DBDB8D}
\definecolor{C9}{HTML}{17BECF}
\definecolor{C9L}{HTML}{9EDAE5}

\newcommand*\diff{\mathop{}\!\mathrm{d}}

\newcommand{\todo}[1]{\textcolor{red}{TODO: #1}}

\newcommand{\coloneq}{\mathrel{\mathop:}=}

\DeclareMathOperator{\diag}{diag}

\theoremstyle{thmstyleone}%
\newtheorem{theorem}{Theorem}
\newtheorem{corollary}{Corollary}
\newtheorem{lemma}{Lemma}
\newtheorem{proposition}[theorem]{Proposition}%

\theoremstyle{thmstyletwo}%
\newtheorem{example}{Example}%
\newtheorem{remark}{Remark}%

\theoremstyle{thmstylethree}%
\newtheorem{definition}{Definition}%

\newtheorem{innercustomthm}{Theorem}
\newenvironment{customthm}[1]
  {\renewcommand\theinnercustomthm{#1}\innercustomthm}
  {\endinnercustomthm}

\begin{document}

\title[Spectral Deferred Corrections in the framework of Runge-Kutta methods]{Spectral Deferred Corrections in the framework of Runge-Kutta methods}


\author*[1]{\fnm{Eugen} \sur{Bronasco}}\email{bronasco@chalmers.se}
\equalcont{These authors contributed equally to this work.}

\author*[2]{\fnm{Joscha} \sur{Fregin}}\email{joscha.fregin@tuhh.de}
\equalcont{These authors contributed equally to this work.}

\author[2]{\fnm{Daniel} \sur{Ruprecht}}\email{ruprecht@tuhh.de}

\author[3]{\fnm{Gilles} \sur{Vilmart}}\email{gilles.vilmart@unige.ch}

\affil*[1]{\orgdiv{Department of Mathematical Sciences}, \orgname{Chalmers University of Technology and University of Gothenburg}, \orgaddress{\street{Chalmersplatsen 4}, \city{Gothenburg}, \postcode{SE-412 96}, \state{Gothenburg}, \country{Sweden}}}

\affil[2]{\orgdiv{Chair Computational Mathematics, Institute of Mathematics}, \orgname{Hamburg University of Technology}, \orgaddress{\street{Am Schwarzenberg-Campus 3}, \city{Hamburg}, \postcode{21073}, \state{Hamburg}, \country{Germany}}}

\affil*[3]{\orgdiv{Section de mathématiques}, \orgname{Université de Genève}, \orgaddress{\street{CP 64}, \city{Genève 4}, \postcode{1211}, \state{Genève}, \country{Switzerland}}}


\abstract{
We interpret a wide range of flavors of Spectral Deferred Corrections (SDC) as Runge-Kutta methods (RKM). Using Butcher series, we show that the considered class of SDC methods achieve at least order $p$ after $p$ iterations compared to the underlying RKM, independently of the error discretisation chosen and the choice of nodes.
    For all collocation RKM, we analyse the phenomenon of order jumps in SDC iterations, where the order is increased by two at each iteration. We prove that it can be obtained by using appropriate inconsistent, implicit, parallelisable error discretisations.
We also investigate the stability properties of the new SDC methods which can in general reduce to that of explicit RKM, but it can be improved by suitable combinations of error discretisations.
We confirm the convergence analysis with numerical experiments and we apply relaxation RKM to derive SDC variants that conserve quadratic invariants. 
}

\keywords{Spectral Deferred Corrections, Runge-Kutta, B-Series}



\maketitle

\section{Introduction}\label{sec:intro} 
Spectral deferred corrections (SDC) have received significant amounts of attention since their introduction by Dutt et al. in~\cite{DuttEtAl2000}. 
Many different variants have emerged, including semi-implicit SDC~\citep{Minion2003}, fast-wave slow-wave SDC~\cite{RuprechtSpeck2016}, GMRES-SDC~\cite{HuangEtAl2006}, parallel SDC~\cite{Speck2017}, low storage SDC~\citep{Crockatt2018}, SDC with penalization of the correction term~\citep{ShuEtAl2007}, fast converging SDC~\citep{Weiser2014}, SDC for second order initial value problems~\cite{WinkelEtAl2015} and Integral Deferred Corrections (IDC)~\citep{ChristliebEtAl2009}. 
The name IDC is sometimes used equivalently to SDC, but, in~\citep{ChristliebEtAl2009}, IDC describes a deferred correction variant that uses higher order integrators for the corrections and introduces new stages in between existing ones. 
For an overview of SDC methods see \cite{OngEtAl2020sdc}.

There are at least four critical design choices during the construction of an SDC method. 
\begin{enumerate}
 \item The number and type of quadrature nodes of the underlying collocation method (e.g. Gauss, equidistant or some other set of nodes).
 \item The procedure to calculate the initial guesses at the quadrature nodes that SDC requires to start iterating (e.g. simply copying the value brought forward from the previous time step or performing one initial prediction using the sweeper, see next item).
 \item The type of sweeper (also sometimes called preconditioner) that updates SDC's stages in every iteration.
To construct the sweeper, there exist two main approaches. 
Originally, SDC used a discretization consistent with a differential equation for the error~\cite{DuttEtAl2000,ChristliebEtAl2009}. 
However, taking an algebraic perspective of SDC, it was later shown that inconsistent discretizations (that do not correspond to a discretisation of order at least one) can be used to improve convergence~\cite{Weiser2014}. We will refer to the sweeper or \emph{error equation discretization} as EED. Note that some variants of SDC use an EED that change in each iteration~\cite{Weiser2014, CaklovicEtAl2025}.
\item The last step of SDC is to calculate the solution at the endpoint from the stage values. 
Extrapolation is proposed in~\cite{DuttEtAl2000} where the interpolating polynomial defined by the stage values is evaluated at the end point of the time step. 
An alternative is to use collocation nodes that where the last stage is equal to the end point and to simply copy the solution.
Quadrature uses the weights provided by the underlying quadrature rule and applies them to function evaluations of the stages as usual in RKM.
\end{enumerate}

Some links between SDC and RKM have already been established. 
A relation between IDC and RKM is shown in~\cite{ChristliebEtAl2009}.
Butcher series were used in~\cite{HairerOID78} to study the convergence of deferred correction schemes.
For select SDC methods, a Shu-Osher form has been described in~\cite{Gottlieb2009,Gottlieb2011}. 
The structure of Butcher tableaus that emerges from this approach has first been described in~\cite{VanDerHouwen1991,Houwen1992,Houwen1993} which predates the original introduction of SDC in~\cite{DuttEtAl2000} by almost a decade. 
Butcher tableaus for IMEX-SDC have been derived in~\cite{BoscarinoEtAl2018}.

The present paper incorporates all those design choices in the RKM framework. 
We derive SDC from the Picard iteration perspective and prove, using Butcher series, that all SDC variants with $K$ iterations have at least order $K$. 
Importantly, our proof shows that this is independent of the chosen nodes or error equation discretisation (EED). 
Our theory also shows that, in some cases, we can guarantee a gain of two orders of convergence per iteration for parallelizable EEDs.
This was observed numerically before~\cite{CaklovicEtAl2025} but we provide a rigorous proof.
We also show that initialising SDC with a given EED is equivalent to initializing SDC by copying the initial value to all nodes and performing one iteration using the EED.
As one additional example how our RKM interpretation can be useful, we build an S-conservative SDC method based on relaxation approaches for RKM which conserves quadratic invariants and favourable long term integration properties for periodic, time reversible systems~\citep{Calvo2011,Cano1997,Ranocha2020b}.

\paragraph{Related work on the theory of SDC.}

Most convergence results for SDC consider either equidistant nodes and consistent EED~\cite{HairerOID78,HansenStrain2006,HansenStrain2011,CausleySeal2019,ChristliebEtAl2010_MoC,ChristliebEtAl2009} or arbitrary nodes and inconsistent EED~\cite{HagstromZhou2006,HuangEtAl2006,ShuEtAl2007,VanDerHouwen1991}. 
Few works address the combination of arbitrary nodes and consistent EEDs, one example being~\cite{RuprechtSpeck2016}.
Consistent, high order EED are also discussed in \cite{HairerOID78,HansenStrain2006,HansenStrain2011,ChristliebEtAl2010_MoC,ChristliebEtAl2009,TangEtAl2013}.

SDC can be initialised using a low order method~\cite{DuttEtAl2000} or by copying the initial state~\cite{Houwen1992,RuprechtSpeck2016}.
\cite{ShuEtAl2007} prove that using quadrature to obtain the end point can increase the order of the method by one if the order of the underlying method is not yet reached. 
\cite{HansenStrain2011,HansenStrain2006} analyse SDC for arbitrary order one-step and multi-step methods for equidistant nodes and find that, in contrast to non-equidistant nodes, the order of SDC increases by the order of the EED scheme with each iteration.
They also seem to be the first to suggest to change the EED in every iteration, which is later found necessary to construct parallel SDC variants that converge robustly for stiff problems~\cite{CaklovicEtAl2025}.

\cite{HagstromZhou2006} consider non-equidistant grids and show that the maximum order is bounded by the underlying method and not the number of nodes. 
Their findings hold also hold for inconsistent EED. 
\cite{HuangEtAl2006} prove that GMRES-SDC with $s$ Gauss nodes can achieve order $2s$. 
A convergence proof for scaled EED, i.e. SDC where the coefficient matrix for the error discretisation is scaled by a constant factor,  can be found in \cite{ShuEtAl2007}. 
Examples of efficient inconsistent EED's are given in~\cite{Weiser2014, Speck2017}.
\cite{CausleySeal2019} consider SDC methods where the error is discretised either with implicit, explicit Euler or implicit trapezoidal rule. 
They prove that SDC's order increases by one per iteration for Euler or two for the trapezoidal rule and also find that the error equation does not need to be discretised with a consistent method. 
\cite{ChristliebEtAl2010_MoC} discretise the error equation using arbitrary order RKM to construct consistent EED when using equidistant nodes.

\cite{ChristliebEtAl2009} expand the IDC methods introduced in~\cite{ChristliebEtAl2010_MoC} by allowing multistep methods to discretise the error equation. 
They cast their IDC as RKM and build Butcher tableaux to find the local truncation error and show that for non-equidistant node a high order ($p \geq 2$) EED does not necessarily increase the order of SDC by more than one per iteration. \cite{HairerOID78} interprets deferred correction methods with $n$ equidistant nodes as $n$ steps of an integrator which can be expaned as a Butcher series and uses the theory of Butcher series to prove the order increase per iteration.

\cite{RuprechtSpeck2016} prove that SDC gains one order per iteration when using implicit-explicit methods as EED, but the prove works for standard SDC as well. 
Their proof breaks down for inconsistent EED because their bound on the supremums norm of the matrix containing the collocation weights is no longer guaranteed to be true.
Finally, \cite{VanDerHouwen1991} give a proof that after $k$ iterations SDC yields at least order $p = \text{min}(k,r)$, where $r$ is the minimal stage order of the underlying collocation method.

\paragraph{Order jumps in SDC.}
An \emph{order jump} refers to a phenomenon in SDC methods where the order of accuracy of the solution increases by more than one in a single iteration.
Such order jumps have been discussed already in~\cite{HansenStrain2011,HansenStrain2006,ChristliebEtAl2009,ChristliebEtAl2010_MoC}. 
They prove that order jumps by order $p$ occur, when the error equation is discretised using a $p$ order method and is applied within an SDC method using equidistant node distributions (see e.g. Lemma 3.9, Lemma 3.10 in \cite{ChristliebEtAl2009}).
Order jumps with non-equidistant grids are briefly discussed in~\cite{ChristliebEtAl2009} (Example 3.13) and observed numerically for parallel SDC methods in~\cite{CaklovicEtAl2025}.
To our knowledge, there is no theory available that predicts order jumps for non-equidistant nodes for general EEDs.

We use the Julia package described in \cite{Ketcheson2023} and translate SDC into Runge-Kutta methods to calculate the order of SDC methods based on their Butcher tableaux and analyse the behaviour of order jumps further.
Tables \ref{table:TRAPLOBATTO}--\ref{table:JUMPERRADAU} show the orders of SDC methods with Gauss, Radau IIA, and Lobatto nodes using the trapezoidal rule EED considered in~\cite{ChristliebEtAl2009}, an EED optimised for non stiff problems considered in~\cite{CaklovicEtAl2025}, and a novel EED which we introduce in Theorem \ref{thm:order_jump}, Section \ref{sec:order_jumps}. Tables \ref{table:TRAPLOBATTO} and \ref{table:TRAPGAUSS} show that considering an EED derived from an order $2$ method is not enough to obtain an order jump when the nodes are non-equidistant. Moreover, Table \ref{table:TRAPLOBATTO} illustrates that an iteration of SDC fails to increase the order by $1$ after an order jump occurs which agrees with the observations from \cite{ChristliebEtAl2009}. This is discussed in more detail in Remark \ref{rmk:trap_order_jump_Lobatto}, Section \ref{sec:convergence_order_sdc}.
Tables \ref{table:MINGAUSS}, \ref{table:MINRADAU}, and \ref{table:MINLOBATTO} show that the SDC methods with Gauss, Radau IIA, and Lobatto nodes with the EED optimised for non stiff problems  perform an order jump when the iteration count is one below the number of nodes used. This phenomenon is explained in Corollary \ref{corr:MIN-SR-NS_jump}, Section \ref{sec:collocation_order_jump}. Table \ref{table:JUMPERRADAU} demonstrates that the novel, diagonal EED introduced in Theorem \ref{thm:order_jump} performs an order jump on each iteration using a non equidistant node set.

This work is organised as follows. In chapter 2 we derive SDC based on a perturbed initial value problem and bridge a connection to RKM. We introduce Butcher series and their connection to SDC. Chapter 3 contains the main theoretical results of this work. Using Butcher series, we prove that an SDC method with $K$ iterations has at least order $K$. We proceed by using linear theory to find an EED that increases the order of SDC by two in each iteration, independent of the node set. We then derive conditions on EED that increases the order of SDC by two for nonlinear problems if a collocation method is used as the underlying method. In Chapter 4 we discuss linear stability properties of the new methods and possible improvements of the stability domain. We numerically illustrate that the convergence order predicted in our theory is achieved. We then discuss convergence of the iterations of the SDC methods to the underlying method with a given timestep. Finally, we use the theory of relaxation RKM to force conservation of the Hamiltonian of Eulers rigid body equations when solved using SDC to further motivate SDC in the RKM framework. The resulting methods can achieve favourable long term integration properties compared to standard SDC.

\section{Spectral deferred corrections}\label{sec:sdc}
Consider the system of ordinary differential equations (ODE),
\begin{equation}
    \frac{\diff u(t)}{\diff t} = f(u(t)),
	\label{IVP}
\end{equation}
with vector field\footnote{For simplicity of the presentation, we assume the ODE to be autonomous. Note that this is without loss of generality by considering the augmented system with time equation $\frac{ds(t)}{dt} = 1$, see \cite[Section III.1.1]{HairerEtAl2002_geometric}.} $f: \mathbb{R}^d \to \mathbb{R}^d$ assumed sufficiently differentiable and with initial condition $u(0) = u_0$.
In this paper we are interested in the implementaion of the classical implicit Runge-Kutta methods (RKM) of the form,
\begin{subequations}
\begin{align}
		u_{n+1} &= u_n + \Delta t \sum_{i=1}^s b_i f(u_n^{[i]} )\label{ButcherUpdate} \,, \\
		u_n^{[i]} &=u_n + \Delta t \sum_{j=1}^s a_{ij} f(u_n^{[j]} ) \,, \quad i = 1, \dots, s \,,
	\label{ButcherForm}
\end{align}
\end{subequations}
where $\Delta t = t_{n+1}- t_n$ is the time step and the coefficients $b_i$ and $a_{ij}$  define the RKM and we define $c_i := \sum_{j=1}^s a_{ij}$. 
The coefficients of the RKM can be compactly arranged in a Butcher tableau
\begin{equation*}
\renewcommand\arraystretch{1.2}
\begin{array}
{c|c}
\bm{c} & \bm{A}\\
\hline
&\bm{b}^\intercal
\end{array},
\label{ButcherTableau}
\end{equation*}
where $\bm{c} = (c_i)_{i=1}^s$, $\bm{b} = (b_i)_{i=1}^s$, and $\bm{A} = (a_{ij})_{i,j=1}^s$ \citep{Butcher1964a}. In this paper, we shall also focus on the case of \emph{collocation} Runge-Kutta methods \cite[Section II.1.2]{HairerEtAl2002_geometric} with arbitrary collocation nodes $c_i$, $i = 1, \dots, s$, and where we recall that Runge-Kutta coefficients are given by,
\[ b_i = \int_{0}^1 l_i(x) dx \,, \quad a_{ij} = \int_{0}^{c_i} l_j(x) dx \,, \quad i, j = 1, \dots, s \,, \]
where $l_i(x) = \prod_{j\neq i} \frac{x - c_j}{c_i - c_j}$ are the Lagrange interpolation polynomials related to the nodes $c_i, i=1, \dots, s$. It includes, in particular, the widely used Gauss, Radau IIA, Lobatto IIIA methods which are implicit Runge-Kutta methods of order $2s$, $2s - 1$, and $2s - 2$ with the favourable A-stability property which is desirable for stiff problems.

Consider the Dahlquist's test equation $\frac{\diff u(t)}{\diff t} = \lambda u(t)$ with $\lambda \in \mathbb{C}$ and $Re(\lambda) \leq 0$. 
For any method, we can find the stability function $R(z)$, with $z = \lambda \Delta t$, that advances the numerical solution of the Dahlquist equation forward in time $u_{n+1} = R(z)u_n$.
Following \cite{HairerEtAl1996_stiff} (Definition 2.1) we introduce the set $S = \{z \in \mathbb{C}: |R(z)| \leq 1 \}$ as the stability domain of a given method. If $z \in S$, then the numerical solution $\{u_n\}$ is bounded and is called stable. A method is called A($\alpha$) stable if $\{z: |\arg(-z)| \leq \alpha \} \in S$ with $\alpha \in [0, \pi/2]$ (\cite{HairerEtAl1996_stiff} Definition 3.9, \cite{Widlund1967}).  A method is called A-stable if it is A($\alpha$)-stable with $\alpha=\pi/2$ \cite{Dahlquist1963} \cite{HairerEtAl1996_stiff} (Definition 3.3). A method is called L-stable if it is A-stable and additionally satisfies \cite{Ehle1969}
\begin{equation}
	\lim_{z \to \infty} R(z) = 0 .
\label{Lstable}
\end{equation}
A method is L($\alpha$)-stable if it is A($\alpha$)-stable and its stability function fulfills the limit above. Finally, for a method that satisfies $a_{sj} = b_j$, $j = 1, \dots, s$, Equation \ref{Lstable} holds true \cite{HairerEtAl1996_stiff} (Proposition 3.8). These methods are called stiffly accurate \cite{Prothero1974}.
For instance, Radau IIA and Lobatto IIIA are L-stable and stiffly accurate RKM.

\begin{proposition}
Let $(\tilde{\bm{A}}, \bm{b}, \bm{c}) = (\bm{A}_\Delta^0, \ldots, \bm{A}^K_\Delta, \bm{A}, \bm{b}, \bm{c})$ define an implicit SDC method. If $\tilde{\bm{A}}$ is not singular and satisfies $a_{si} = b_i$, the method is stiffly accurate, then $\lim_{z \to \infty} R(z) = 0$.
\end{proposition}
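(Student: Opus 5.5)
The plan is to reduce the statement to the classical argument behind \cite{HairerEtAl1996_stiff} (Proposition 3.8), applied to the large Butcher tableau $(\tilde{\bm{A}}, \tilde{\bm{b}}, \tilde{\bm{c}})$ of the SDC method. Here $\tilde{\bm{b}}$ denotes the weight vector of the full SDC scheme, i.e.\ $\bm{b}$ placed on the last block and zeros elsewhere. The hypothesis $a_{si}=b_i$ should be read as: the last row of $\tilde{\bm{A}}$ coincides with $\tilde{\bm{b}}^\intercal$.

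\textbf{Step 1 (stability function).} Applying the method to Dahlquist's equation gives the stages
\[
\bm{U} = \mathbb{1} u_n + z\tilde{\bm{A}}\bm{U},
\]
hence the standard formula
\[
R(z) = 1 + z\tilde{\bm{b}}^\intercal (I - z\tilde{\bm{A}})^{-1}\mathbb{1}.
\]
This is well defined for $|z|$ large, since $I - z\tilde{\bm{A}} = -z(\tilde{\bm{A}} - z^{-1} I)$ and $\tilde{\bm{A}}$ is nonsingular.

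\textbf{Step 2 (stiff accuracy).} Since $\tilde{\bm{b}}^\intercal = e_N^\intercal \tilde{\bm{A}}$, with $N$ the total number of stages, we have
\[
R(z) = 1 + e_N^\intercal z\tilde{\bm{A}}(I - z\tilde{\bm{A}})^{-1}\mathbb{1}.
\]
Using the identity $z\tilde{\bm{A}}(I-z\tilde{\bm{A}})^{-1} = (I - z\tilde{\bm{A}})^{-1} - I$ and $e_N^\intercal\mathbb{1}=1$, this simplifies to
\[
R(z) = e_N^\intercal (I - z\tilde{\bm{A}})^{-1}\mathbb{1}.
\]
Put differently, $u_{n+1}$ equals the last stage value.

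\textbf{Step 3 (limit).} Write
\[
(I - z\tilde{\bm{A}})^{-1} = -z^{-1}\bigl(\tilde{\bm{A}} - z^{-1}I\bigr)^{-1}.
\]
As $z\to\infty$, $(\tilde{\bm{A}} - z^{-1}I)^{-1} \to \tilde{\bm{A}}^{-1}$ by continuity of inversion at the nonsingular matrix $\tilde{\bm{A}}$. The factor $z^{-1}$ then forces $R(z) \to 0$; more precisely, $R(z) = -z^{-1} e_N^\intercal\tilde{\bm{A}}^{-1}\mathbb{1} + \mathcal{O}(z^{-2})$.

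\textbf{Main obstacle.} The only real issue is bookkeeping: identifying the correct block tableau for the SDC method (initial EED block $\bm{A}^0_\Delta$, the sweep blocks involving $\bm{A}-\bm{A}^k_\Delta$ and $\bm{A}^k_\Delta$, and the final quadrature or copy row). One must check that the hypothesis "last row equals $\bm{b}$" is exactly the condition $\tilde{\bm{b}}^\intercal = e_N^\intercal\tilde{\bm{A}}$ for this augmented tableau, and that "not singular" refers to $\tilde{\bm{A}}$ itself. If the final update is given by a separate quadrature row, I would first absorb it as an extra stage, or observe that when it coincides with the last stage row the update equals the last stage. Once that identification is made, Steps 1–3 are purely linear algebra.
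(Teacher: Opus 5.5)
Your argument is correct and is essentially the proof of Proposition 3.8 in \cite{HairerEtAl1996_stiff}, which is all the paper gives. There one writes $R(\infty)=1-\tilde{\bm b}^\intercal\tilde{\bm A}^{-1}\bm{1}$, with $\bm{1}$ the vector of ones, and uses $\tilde{\bm b}^\intercal\tilde{\bm A}^{-1}=e_N^\intercal$; you do the same computation in the other order, first identifying $R(z)$ with the last stage. The one caveat is your aside about absorbing a separate quadrature row as an extra stage: that enlarged matrix has a zero last column and is singular, so the hypothesis must be read, as in your main argument, as the last row of $\tilde{\bm A}$ itself equalling $\tilde{\bm b}^\intercal$.
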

\begin{proof}
See Proposition 3.8 in \cite{HairerEtAl1996_stiff} and the proof therein.
\end{proof}

This result is mentioned in \cite{VanDerHouwen1991} and is independent of the EED's chosen. We restate it here since the equivalence of parallel RKM and SDC wasn't recognised until recently~\cite{CaklovicEtAl2025}.

An alternative to using the classical Newton or quasi-Newton method for solving the non-linear system \eqref{ButcherForm} and compute the internal stages $u^{[i]}_n$ is to consider SDC methods. As emphasized in the introduction, there are many possible formulations of SDC method. Here we consider the following derivation based on an integral formulation of \eqref{IVP}.
Consider the partitioned problem 
\begin{align}
  \frac{\diff\tilde{u}(t)}{\diff t} &= \tilde{f}(t, \tilde{u}, \tilde{\delta}) \label{eq:dgl_sol} \\
  \frac{\diff\tilde{\delta}(t)}{\diff t} &= \tilde{f}_\delta(t, \tilde{u}, \tilde{\delta}) \label{eq:dgl_error}
\end{align}
and let $u(t) = \tilde{u}(t) + \tilde{\delta}(t)$, where $ \tilde{u}(t)$ is an approximate solution and $\tilde{\delta}(t)$ is the error where we assume that $\tilde{\delta}(t_n) = 0$. 
Furthermore, let $\tilde{f} \coloneq f$ and $\tilde{f}_\delta(t, \tilde{u},\tilde{\delta}) \coloneq f(t, \tilde{u} + \tilde{\delta}) - f(t, \tilde{u})$.
Based on this partitioning, \eqref{IVP} can then be written in integral form
\begin{equation}
	u(t) = u(t_n) + \int_{t_n}^t f(u(s)) \diff s = u(t_n) + \int_{t_n}^t {f}(\tilde{u}(s)) \diff s + \int_{t_n}^t ({f}(u(s))-{f}(\tilde{u}(s)))\diff s \,,
\end{equation}
where $\tilde{u}(s)$ stands for the approximation of the solution $u(s)$.
Performing a Picard iteration, we obtain the following iteration $u^{k+1}(t)$ to approximate $u(t)$,
\begin{equation}
    u^{k+1}(t) = u_n + \int_{t_n}^t {f}(u^{k}(s)) \diff s + \int_{t_n}^t (f(u^{k+1}(s))-f(u^{k}(s)))\diff s \,, \quad k = 0, 1, 2, \dots \,,
\end{equation}
where $u_n$ is a known numerical solution at time $t_n$.
Applying Runge-Kutta discretizations to each of the two above integrals results in the following SDC method,
\begin{equation}
    \label{eq:SDC_internal}
    u_n^{[k+1,i]} = u_n + \underbrace{\Delta{t}\sum_{j=1}^{s} a_{ij}f(u_n^{[k,j]})}_{\text{collocation term}} + \underbrace{\Delta{t}\sum_{j=1}^{s} \tilde{a}^k_{ij} (f(u_n^{[k+1,j]}) - f(u_n^{[k,j]}))}_{\text{correction term}} \,,
\end{equation}
where $k = 1, \dots, K$ for a fixed number of iterations $K$ and where $u_n^{[K,i]}$ approximates $u_n^{[i]}$. The Runge-Kutta coefficients $\bm{A}_\Delta^k = (\tilde{a}_{ij}^k)_{i,j=1}^s$ correspond to the so called \emph{error equation discretization} (EED) where the corresponding term in \eqref{eq:SDC_internal} can be interpreted as a correction term in the fixed point Picard iterations. The output of the SDC method is then given by
\begin{equation}
    \label{eq:SDC_final}
    u_{n+1} = u_n + \Delta t \sum_{i=1}^s b_i f(u^{[K,i]}_n) \,.
\end{equation}
The iterative method \eqref{eq:SDC_internal} and \eqref{eq:SDC_final} defines the class of SDC methods $(\bm{A}^0_\Delta, \dots, \bm{A}^K_\Delta, \bm{A}, \bm{b})$ with $\bm{A}^k_\Delta = (\tilde{a}^k_{ij})_{i,j=1}^s$, that we shall analyse in this paper.

We emphasize that the SDC method \eqref{eq:SDC_internal} and \eqref{eq:SDC_final} is itself a Runge-Kutta method with the following augmented Butcher tableau with $(K+1)s$ internal stages,
\begin{equation}
        \label{eq:SDCButcherTableau}
\begin{array}
{c|ccccc}
\tilde{\bm{c}}& \bm{A}_{\Delta}^0  &  &     &  \\
\bm{c}& \bm{A}-\bm{A}_{\Delta}^1 & \bm{A}_{\Delta}^1 &   &  \\
\vdots&     & \ddots & \ddots &   \\
\bm{c} & &    & \bm{A}-\bm{A}_{\Delta}^K & \bm{A}_{\Delta}^K \\
\hline
&  &  &   & \bm{b}^\intercal 
\end{array} \,,
\end{equation}
where $\bm{A}$ is the coefficients of the original Runge-Kutta method \eqref{ButcherForm} and $\tilde{\bm{c}} = (\tilde{c}_{i})_{i=1}^s$ with $\tilde{c}_i = \sum_{j=1}^s \tilde{a}^0_{ij}$.
Natural choices for choosing EED $\bm{A}^k_\Delta$ are based on implicit or explicit Euler methods or diagonal form, which is equivalent to setting, respectively,
\begin{equation}
\label{eq:IEEulerED}
\bm{A}_\Delta^k= \begin{pmatrix}
\Delta\tau_1 \\
\vdots & \ddots & \\
\Delta\tau_1 & \dots & \Delta\tau_{s} 
\end{pmatrix}\,,
\quad
\bm{A}_\Delta^k = \begin{pmatrix}
0 & \\
\Delta\tau_2 &  0\\
\vdots & \ddots & \ddots\\
\Delta\tau_2 & \dots & \Delta\tau_{s} & 0
\end{pmatrix} \,,
\quad
\bm{A}_\Delta^k = \alpha_k \diag(\bm{c}) \,,
\end{equation} 
with $\Delta \tau_i = c_{i} - c_{i-1}$ and $\Delta \tau_1  = c_{1}$, for some constants $\alpha_k \in \mathbb{R}$. Note that the above diagonal form is suitable for a parallel implementation and reveals to be a key choice for faster convergence of the SDC iterations (order jumps, see Theorem \ref{thm:order_jump} in Section \ref{sec:order_jumps}).

\begin{remark}
    \label{rmk:init}
    For the initialization of the SDC method \eqref{eq:SDC_internal}, \eqref{eq:SDC_final} we consider for simplicity the choice that $u^{[0,i]}_n = u_n$, $i = 1, \dots, s$, of a constant initial guess. We emphasize, however, that other choices are possible as considered in \cite{Ascher1997} in the context of implicit-explicit RKM, 
    \begin{equation*}
        u_n^{[0,i]} = u_n + \Delta t \sum_{j=1}^{s} \tilde{q}_{ij} f(u_n^{[0,j]} ) \,,
    \end{equation*}
    for a choice of coefficients $\tilde{q}_{ij}$ yielding a higher order of approximation of $u_{n+1}$ and satisfying $\bm{c} = \tilde{\bm{c}}$ to approximate the internal stages. Note that this is included in the considered class of SDC methods by considering an appropriate choice for the initial EED, in particular $\bm{A}^0_\Delta = (\tilde{q}_{ij})_{i,j=1}^s$. Indeed, using \eqref{eq:SDC_internal}, we have,
    \begin{equation*}
        u_n^{[1, i]} = u_n + \Delta{t}\sum_{j=1}^{s} a_{ij} f(u_n) + \Delta{t}\sum_{j=1}^{s} \tilde{q}_{ij} (f(u_n^{[1,j]}) - f(u_n)) = u_n + \Delta t \sum_{j=1}^s \tilde{q}_{ij} f(u_n^{[1,j]}) \,,
    \end{equation*}
    where we use $\sum_{j=1}^{s} a_{ij} = c_i = \tilde{c}_i = \sum_{j=1}^{s} \tilde{q}_{ij}$.
\end{remark}

\begin{remark}
    We emphasize that the assumption $\sum_{j=1}^s \tilde{a}^k_{ij} = c_i$ used in the Remark \ref{rmk:init} for $k = 0$ is not needed for defining EED $\bm{A}^k_\Delta$ with $k \geq 1$. For instance, the second and third choices in \eqref{eq:IEEulerED} do not satisfy this assumption that the EED is consistent with the nodes $\bm{c}$ of the original Runge-Kutta method.
\end{remark}

\begin{remark}
    \label{rmk:final_update}
    There are different possible choices for the output of the SDC method compared to the choice \eqref{eq:SDC_final} considered for simplicity. For instance, following \cite[Eq. 4.2]{DuttEtAl2000} one can consider the extrapolation formula $u_{n+1} = \sum_{i=1}^s \ell_i(1) u_n^{[K,i]}$, with $\ell_i$ being the Lagrange polynomials to the nodes $c_i$.
    For stiffly accurate Runge-Kutta methods where $a_{sj} = b_j$ like Radau IIA or Lobatto IIIA methods,
    the choice $u_{n+1} = u_n^{[K,s]}$ can produce $L(\alpha)$-stable SDC methods \cite[Theorem 3.1]{LaytonMinion2005}.
    The choice \eqref{eq:SDC_final} can be advantageous compared to $u_{n+1} = u^{[K,s]}_n$, because it permits to gain one order of accuracy without extra cost since $f(u^{[K,i]}_n)$ is available from the last iteration of the SDC method \eqref{eq:SDC_internal}. Note, that all outlined choices to obtain $u_{n+1}$ require different coefficients $\bm{b}$ \cite{Fregin2026}. The coefficients $\bm{A}, \bm{A}_\Delta^k, \bm{c}$ remain the same.
\end{remark}

\subsection{Butcher series for SDC}  \label{sec:Bseries}

Butcher series are a powerful tool for the numerical analysis of Runge-Kutta methods and hence they are a natural choice for the analysis of SDC methods which can be themselves interpreted as Runge-Kutta methods as emphasized in Section \ref{sec:sdc}.
Expanding in Taylor series as $h \to 0$ the exact solution and numerical solution after one step, we obtain assuming $u(t_n) = u_n$,
\begin{align}
	u_{n+1} &= u_n + \Delta t \sum_{i=1}^s b_i f(u_n) + \Delta t^2 \sum_{i,j=1}^s b_i a_{ij} f^\prime (u_n) f (u_n) + \mathcal{O}(\Delta t^{3}), \nonumber \\
    u(t_{n+1}) &= u_n + \Delta t f(u_n) + \Delta t^2 \frac{1}{2} f^\prime (u_n) f (u_n) + \mathcal{O}(\Delta t^{3}),
\label{eq:Taylor_expansion_example}
\end{align}
and the difference $u_{n+1} - u(t_{n+1}) = \mathcal{O}(h^{p+1})$ corresponds to the \emph{local error} of an order $p$ method. In particular, we see from \eqref{eq:Taylor_expansion_example} that Runge-Kutta method with coefficients $(\bm{A},\bm{b})$ is of order (at least) $1$ if and only if $\sum_{i=1}^s b_i = 1$ and of order (at least) $2$ if and only if
\[ \sum_{i=1}^s b_i = 1, \quad \sum_{i,j=1}^s b_i a_{ij} = \frac{1}{2}. \]
Rooted trees~\cite{Cayley1857} systematically represent the elementary differentials which arise in the Taylor expansion of the exact solution and the Runge-Kutta method. The advantage of Butcher series is that they are able to represent both the exact and numerical solution as introduced in \cite{hairerButcherGroupGeneral1974} based on the seminal work of J. Butcher \cite{Butcher1964a}. We follow here the presentation in \cite[Section III]{HairerEtAl2002_geometric}.
\begin{definition}[rooted tree]
A \emph{rooted tree} is defined recursively as a vertex called the \emph{root} and an unordered monomial of rooted trees called the \emph{children} of the root.
\end{definition}
The set of all rooted trees is denoted by $T$.
Examples of trees with the roots at the bottom are
\[ \forestA, \quad \forestB, \quad \forestC, \quad \forestD. \]
Monomials of trees are called \emph{forests} and denoted by $S(T)$. 
The empty monomial is written as $\mathbf{1}$. 
We use the map $B^+ : S(T) \to T$ to construct a tree by attaching all roots of a forest to a new vertex which then becomes a new root. 
Examples would be
\[ B^+(\mathbf{1}) = \forestE, \quad B^+(\forestF) = \forestG, \quad B^+(\forestH) = \forestI, \quad B^+(\forestJ) = \forestK. \]
We also define several functions over $T$ that are useful for a systematic representation of order conditions.
\begin{definition}[tree functions]
    \label{def:tree_functions}
    The size, factorial, symmetry, and elementary differential of a tree $\tau = B^+(\tau_1^{m_1} \cdots \tau_k^{m_k})$ with distinct trees $\tau_1, \dots, \tau_k \in T$ with multiplicities $m_1, \dots, m_k \in \mathbb{N}$ are defined as follows:
	\begin{enumerate}
		\item the \emph{size} is defined as the number of vertices in $\tau$, that is, $|\tau| := 1 + \sum_{i=1}^k m_i |\tau_i|$,
		\item the \emph{factorial} is defined as the product of the size of the tree and the factorials of the children of the root, that is, $\gamma(\tau) = |\tau| \prod_{i=1}^k \gamma(\tau_i)^{m_i}$ with $\gamma (\forestL) = 1$,
		\item the \emph{symmetry} is defined as the number of automorphisms of the tree, that is, $\sigma(\tau) := \prod_{i=1}^k m_i! \sigma(\tau_i)^{m_i}$ with $\sigma(\forestM) = 1$.
		\item the \emph{elementary differential} is defined as
			\[ F_{\Delta t f} (\tau) := \Delta t f^{(N)} \big( F_{\Delta t f} (\tau_1), \dots, F_{\Delta t f} (\tau_N) \big) \,, \]
			where $\tau = B^+(\tau_1 \cdots \tau_N)$ with $\tau_1 \cdots \tau_N$ not distinct, and $F_{\Delta t f} (\forestN) := f$.
	\end{enumerate}
\end{definition}
An alternative representation of an elementary differential $F_{\Delta t f} (\tau) : \mathbb{R}^d \to \mathbb{R}^d$ with $\tau = B^+ (\tau_1 \cdots \tau_N)$ where $\tau, \tau_1, \dots, \tau_N \in T$ is given by
\begin{equation}
 F_{\Delta t f} (\tau) = \Delta t \sum_{i_1, \dots, i_N = 1}^d F_{\Delta t f} (\tau_1)^{i_1} \cdots F_{\Delta t f} (\tau_N)^{i_N} \frac{\partial f}{\partial x_{i_1} \cdots \partial x_{i_N}}.
\end{equation}
We are now able to represent the Taylor expansion in~\eqref{eq:Taylor_expansion_example} using formal sums indexed by rooted trees~\cite{Butcher1964a,HairerEtAl1993_nonstiff,Bseries_intro,SanzSerna15fsa},
\begin{align}
	u_{n+1} &= u_n + \sum_{\tau \in T} \frac{\alpha(\tau)}{\sigma(\tau)} F_{\Delta t f} (\tau) (u_n), \nonumber \\
	\label{eq:Taylor_expansion_tree}
	u(t_{n+1}) &= u_n + \sum_{\tau \in T} \frac{1}{\gamma(\tau)\sigma(\tau)} F_{\Delta t f} (\tau) (u_n),
\end{align}
where $\alpha : T \to \mathbb{R}$ is a functional uniquely defined by the Butcher tableau of the Runge-Kutta method, for example
\[ \alpha(\bullet) = \sum_{i=1}^s b_i, \quad \alpha(\forestO) = \sum_{i,j=1}^s b_i a_{ij}, \quad \alpha(\forestP) = \sum_{i,j,k,l=1}^s b_i a_{ij} a_{ik} a_{kl}. \]
The formal sums in \eqref{eq:Taylor_expansion_tree} are called Butcher series. They give us explicit expressions for order conditions of Runge-Kutta methods of any order $p$. 
A Runge-Kutta method is of order $p$ if 
\begin{equation*}
\alpha(\tau) = \frac{1}{\gamma(\tau)} \; \text{for all} \; |\tau| \leq p.
\end{equation*}
\begin{definition}[Butcher series]
    \label{def:Butcher_series}
    Let $\alpha : T \to \mathbb{R}$ be a coefficient map over trees, $\Delta t$ a timestep, and $f : \mathbb{R}^d \to \mathbb{R}^d$ a vector field. Then,
	\[ B_{\Delta t f}(\alpha) = \sum_{\tau \in T} \frac{\alpha(\tau)}{\sigma(\tau)} F_{\Delta t f} (\tau) \]
	is a formal sum of vector fields and called a \emph{Butcher series}.
\end{definition}
We note that both $u_{n+1}$ and $u_n^{[i]}$ of a Runge-Kutta method of the form \eqref{ButcherUpdate},\eqref{ButcherForm} can be written using Butcher series as
\begin{align*}
	u_{n+1} &= u_n + B_{\Delta t f}(\beta) (u_n) \\
    u^{[i]}_n &= u_n + B_{\Delta t f}(\beta^{[i]}) (u_n) \,, \quad i=1,\dots,s \,.
\end{align*}
The coefficient maps $\beta$ and $\beta^{[i]}$ are defined recursively by
\begin{align}
    \beta (B^+(\tau_1 \cdots \tau_n)) &= \sum_{j=1}^s b_j \beta^{[j]}(\tau_1 \cdots \tau_n), \nonumber \\
    \beta^{[i]} (B^+(\tau_1 \cdots \tau_n)) &= \sum_{j=1}^s a_{ij} \beta^{[j]}(\tau_1 \cdots \tau_n), \label{eq:ai_prop}
\end{align}
where $\beta^{[j]}$ is extended to $S(T)$ using the multiplicative property,
\[ \beta^{[j]}(\tau_1 \cdots \tau_n) = \beta^{[j]}(\tau_1) \cdots \beta^{[j]}(\tau_n). \]
In the context of SDC methods defined by \eqref{eq:SDC_internal}, \eqref{eq:SDC_final}, let $\alpha^{[K]}$ and $\alpha^{[K,i]}$ denote the final and internal stages of SDC $(\bm{A}^0_\Delta, \dots, \bm{A}^K_\Delta, \bm{A}, \bm{b})$ obtained after $K$ iterations, precisely, 
\begin{align*}
	u_n^{[K,i]} &= u_n + B_{\Delta t f} (\alpha^{[K,i]})(u_n), \\
	u_{n+1} &= u_n + \sum_{i=1}^s b_i f(u^{[K,i]}_n) = u_n + B_{\Delta t f} (\alpha^{[K]})(u_n).
\end{align*}
Combining \eqref{eq:ai_prop} with the form of the Butcher tableau of SDC in \eqref{eq:SDCButcherTableau}, we have the following property of $\alpha^{[K]}$ and $\alpha^{[K,i]}$,
\begin{align}
	\alpha^{[K]}(B^+(\tau_1 \cdots \tau_n)) &= \sum_{i=1}^s b_i \alpha^{[K,i]}(\tau_1 \cdots \tau_n), \nonumber \\
	\label{eq:ai_prop_sdc}
	\alpha^{[K,i]} (B^+(\tau_1 \cdots \tau_n)) &= \sum_{j=1}^s \big( a_{ij} - a^\Delta_{ij} \big) \alpha^{[K-1,j]}(\tau_1 \cdots \tau_n) + \sum_{j=1}^s a^\Delta_{ij} \alpha^{[K,j]} (\tau_1 \cdots \tau_n),
\end{align}
that we use extensively in the proofs that follow.

\section{Convergence analysis of SDC methods}\label{sec:results}
Consider an SDC method $(\bm{A}_{\Delta}^0, \ldots, \bm{A}_{\Delta}^K, \bm{A}, \bm{b})$.
Since SDC approximates the internal stages of the Runge-Kutta method $(\bm{A}, \bm{b})$, it is not meant to achieve a higher order of accuracy for the exact solution than the RKM.
The order of an SDC method $(\bm{A}_{\Delta}^0, \ldots, \bm{A}_{\Delta}^K, \bm{A}, \bm{b})$ is therefore determined by the local error of final stage \eqref{eq:SDC_final} compared to the Runge-Kutta method $(\bm{A}, \bm{b})$ which motivates the following definition.
\begin{definition}[SDC order]
    \label{def:sdc_order}
    An SDC method $(\bm{A}_{\Delta}^0, \ldots, \bm{A}_{\Delta}^K, \bm{A}, \bm{b})$ is of order $p_K$ if it coincides with the Runge-Kutta method $(\bm{A}, \bm{b})$ after one step, up to terms of order $\mathcal{O}(\Delta t^{p_K+1})$, i.e.,
    \[ \alpha^{[K]}(\tau) = \beta (\tau) \,, \quad \text{for } |\tau| \leq p_K \,. \]
\end{definition}
The SDC order should not be confused with the order of the Runge-Kutta method \eqref{ButcherUpdate}, \eqref{ButcherForm} or the Runge-Kutta method with the Butcher tableau \eqref{eq:SDCButcherTableau} which in contrast is based on the local error compared to the exact solution.
Now consider the height of a tree and the corresponding concept of height order of SDC.
\begin{definition}[tree height]
    \label{def:tree_height}
    The \emph{height} of a tree $\tau = B^+(\tau_1 \cdots \tau_n)$ with trees $\tau_1, \dots, \tau_n \in T$ is defined as the number of vertices in the longest path from the root to a leaf, that is, $\text{ht}(\tau) := 1 + \max_{i=1}^n \text{ht}(\tau_i)$ with $ht(\forestQ) := 1$.
\end{definition}
We note that for any tree $\tau \in T$, the size of the tree $\tau$ is greater than its height, that is, $|\tau| \geq ht(\tau)$, see Figure~\ref{table:HeightAndSize}.
\begin{figure}
	\centering
	\bgroup
	\def\arraystretch{3}%
	\setlength\tabcolsep{0.5cm}
	\begin{tabular}{c||c|c|c|c|}
		& size 2 & size 3 & size 4 & size 5 \\
		\hline
		\hline
		height 2 & \forestR & \forestS & \forestT & \forestU \\
		\hline
		height 3 & & \forestV & \forestW, \forestX & \forestY, \forestAB, \forestBB, \forestCB \\
		\hline
		height 4 & & & \forestDB & \forestEB, \forestFB, \forestGB  \\
		\hline
		height 5 & & & & \forestHB \\
		\hline
	\end{tabular}
	\egroup
	\caption{Comparison of the size and height of trees.}
	\label{table:HeightAndSize}
\end{figure} 
\begin{definition}[height order]
    An SDC method $(\bm{A}_{\Delta}^0, \ldots, \bm{A}_{\Delta}^K, \bm{A}, \bm{b}, \bm{c})$  has \emph{height order} $h_K$ if
    \[ \alpha^{[K]} (\tau) = \beta (\tau), \quad \text{for } i = 1, \dots, s, \quad ht(\tau) \leq h_K \,. \]
\end{definition}
The internal stages of an SDC method are of order $\tilde{p}_K$ (which means it has height order $\tilde{h}_K$) if
\begin{equation} 
\alpha^{[K,i]}(\tau) = \beta^{[i]} (\tau) \,, \quad \text{for } i = 1, \dots, s \quad \text{and } |\tau| \leq \tilde{p}_K \, (\text{or } ht(\tau) \leq \tilde{h}_k) \,.
\end{equation}
Since $|\tau| \geq ht(\tau)$ for any given tree $\tau$, meaning its size is greater than its height, the set of all trees of height up to $h_k$ includes the set of all trees up to size $h_k$.
Therefore, an SDC method with height order $h_k$ is (at least) of order $h_k$. 
We also note that there is an infinite number of trees of a given height $ht(\tau) \geq 2$, so that the B-series of an SDC method with height order of at least $2$ coincides with the B-series of the Runge-Kutta method $(\bm{A}, \bm{b})$ for an infinite number of trees.

\subsection{Convergence order of SDC}
\label{sec:convergence_order_sdc}

The following theorem shows that an SDC iteration with an arbitrary EED increases the height order of SDC by $1$.
\begin{theorem}\label{thm:order_per_sweep}
    Let the internal stages of an SDC method $(\bm{A}_{\Delta}^0, \ldots, \bm{A}_{\Delta}^K, \bm{A}, \bm{b})$ be of height order $h_K - 1$.
    It then holds that
    
    \begin{enumerate}
        \item the method has height order $h_K$,
        \item the internal stages of $(\bm{A}_{\Delta}^0, \ldots, \bm{A}_{\Delta}^K, \bm{A}_{\Delta}^{K+1}, \bm{A}, \bm{b}, \bm{c})$ are of height order $h_K$ for any $\bm{A}^{K+1}_\Delta$.
    \end{enumerate}
\end{theorem}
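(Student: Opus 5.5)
The plan is a direct argument with the recursions \eqref{eq:ai_prop} and \eqref{eq:ai_prop_sdc}, using that the height of a forest $\tau_1\cdots\tau_n$ (the maximum height of its factors) is one less than the height of $B^+(\tau_1\cdots\tau_n)$. The assumption is that $\alpha^{[K,i]}(\tau)=\beta^{[i]}(\tau)$ for all $i$ and all trees with $ht(\tau)\le h_K-1$. Since both $\alpha^{[K,i]}$ and $\beta^{[i]}$ are extended multiplicatively to forests, this equality immediately extends to every forest $\omega=\tau_1\cdots\tau_n$ in which each factor has height at most $h_K-1$. This includes the empty forest $\mathbf{1}$, where both sides equal $1$.

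For the first claim, take a tree $\tau=B^+(\omega)$ with $ht(\tau)\le h_K$. Every factor of $\omega$ then has height at most $h_K-1$, so by \eqref{eq:ai_prop_sdc} and \eqref{eq:ai_prop},
\[
\alpha^{[K]}(\tau)=\sum_{i=1}^s b_i\,\alpha^{[K,i]}(\omega)=\sum_{i=1}^s b_i\,\beta^{[i]}(\omega)=\beta(\tau),
\]
which is exactly height order $h_K$.

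For the second claim, again take $\tau=B^+(\omega)$ with $ht(\tau)\le h_K$. The recursion \eqref{eq:ai_prop_sdc} at iteration $K+1$, with EED $\bm{A}^{K+1}_\Delta=(a^\Delta_{ij})$, gives
\[
\alpha^{[K+1,i]}(\tau)=\sum_{j}(a_{ij}-a^\Delta_{ij})\,\alpha^{[K,j]}(\omega)+\sum_j a^\Delta_{ij}\,\alpha^{[K+1,j]}(\omega).
\]
The first sum already equals $\sum_j(a_{ij}-a^\Delta_{ij})\beta^{[j]}(\omega)$ by the hypothesis. The difficulty is the implicit term $\alpha^{[K+1,j]}(\omega)$, because we do not yet know the new stages agree with $\beta^{[j]}$ on $\omega$.

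The main step is therefore to prove $\alpha^{[K+1,j]}(\sigma)=\beta^{[j]}(\sigma)$ for all trees $\sigma$ of height at most $h_K$ by strong induction on the size $|\sigma|$. The factors of $\omega$ are strictly smaller than $\tau$ and have height at most $h_K-1\le h_K$. So, by the induction hypothesis together with multiplicativity, $\alpha^{[K+1,j]}(\omega)=\beta^{[j]}(\omega)$. The base case is $\sigma=\bullet$, i.e.\ $\omega=\mathbf{1}$, where both forest values are $1$. Substituting gives
\[
\alpha^{[K+1,i]}(\tau)=\sum_j\bigl(a_{ij}-a^\Delta_{ij}+a^\Delta_{ij}\bigr)\beta^{[j]}(\omega)=\beta^{[i]}(\tau),
\]
so the EED coefficients cancel identically, which is why the conclusion holds for any $\bm{A}^{K+1}_\Delta$.

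One point needs care: the recursion \eqref{eq:ai_prop_sdc} for an implicit EED must be read as a well-defined identity of B-series coefficients. This is justified because the B-series of an implicit RK stage is uniquely determined tree by tree, with the right-hand side involving only strictly smaller trees. For the same reason the induction on size is legitimate.
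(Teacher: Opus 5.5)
Your proof is correct and follows the paper's route: part 1 is the same one-line computation via \eqref{eq:ai_prop_sdc}, and in part 2 the EED coefficients cancel once the implicit term $\alpha^{[K+1,j]}(\tau_1\cdots\tau_n)$ is handled inductively. The only difference is that you induct on size where the paper inducts on height, which changes nothing since the children of $\tau$ are both smaller and of lower height.
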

\begin{proof}
    Recall the property \eqref{eq:ai_prop_sdc} and note that $\alpha^{[K+1,i]} (\forestIB) = \beta(\forestJB)$.
    Let $\tau = B^+(\tau_1 \cdots \tau_n)$ be a tree of height up to $h_K$, $ht(\tau) \leq h_K$, then,
    \begin{align*}
        \alpha^{[K]}(\tau) &= \sum_{i=1}^s b_i \alpha^{[K,i]} (\tau_1 \cdots \tau_n) = \sum_{i=1}^s b_i \beta^{[i]} (\tau_1 \cdots \tau_n) = \beta(\tau), \\
        \alpha^{[K+1,i]} (\tau) &= \sum_{j=1}^s (a_{ij} - \tilde{a}_{ij}) \alpha^{[K,j]}(\tau_1 \cdots \tau_n) + \sum_{j=1}^s \tilde{a}_{ij} \alpha^{[K+1,j]}(\tau_1 \cdots \tau_n) \\
        &= \sum_{j=1}^s (a_{ij} - \tilde{a}_{ij}) \beta^{[j]}(\tau_1 \cdots \tau_n) + \sum_{j=1}^s \tilde{a}_{ij} \beta^{[j]}(\tau_1 \cdots \tau_n) = \beta^{[i]} (\tau).
    \end{align*}
    where $\alpha^{[K+1,i]} (\tau_1 \cdots \tau_n) = \beta^{[i]}(\tau_1 \cdots \tau_n)$ by induction on height.
\end{proof}

Note that Theorem \ref{thm:order_per_sweep} remains valid if we replace height order $h_K$ by order $p_K$. 
\begin{corollary}
    \label{corr:hk=K}
    Consider an SDC method $(\bm{A}_\Delta^0, \bm{A}_{\Delta}^1, \ldots, \bm{A}_{\Delta}^K, \bm{A}, \bm{b})$ with $K$ iterations, then it has height order $h_K \geq K$ and order $p_K \geq K$.
\end{corollary}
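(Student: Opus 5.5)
The corollary follows by induction on the number of iterations $K$, using Theorem \ref{thm:order_per_sweep} as the induction step. The quantity tracked is the height order of the internal stages. After $k$ iterations, the claim is that the internal stages $\alpha^{[k,i]}$ have height order at least $k$, i.e. $\alpha^{[k,i]}(\tau)=\beta^{[i]}(\tau)$ for all $i$ and all trees with $ht(\tau)\le k$.

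\emph{Base case.} For $k=0$ the statement is vacuous, since every tree has height at least $1$. So the internal stages of the initialization $(\bm{A}^0_\Delta,\bm{A},\bm{b})$ have height order $0$ whatever $\bm{A}^0_\Delta$ is. Taking $\bm{A}^0_\Delta$ arbitrary is what makes the result hold independently of the initialization; for instance, it covers the constant initial guess of Remark \ref{rmk:init}.

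\emph{Inductive step.} Assume the internal stages after $k$ iterations have height order $k$. Apply part 2 of Theorem \ref{thm:order_per_sweep} with $h_k=k+1$. It gives that, for any choice of $\bm{A}^{k+1}_\Delta$, the internal stages after $k+1$ iterations have height order $k+1$. Induction then shows that the internal stages after $K-1$ iterations have height order at least $K-1$. Part 1 of the theorem, applied with $h_K=K$ (the hypothesis being that the stages $\alpha^{[K,i]}$ have height order $K-1$), gives $\alpha^{[K]}(\tau)=\beta(\tau)$ for $ht(\tau)\le K$. In fact the stages $\alpha^{[K,i]}$ already have height order $K$, so part 1 could even give $K+1$, but $K$ is all that is claimed. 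Hence $h_K\ge K$.

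\emph{From height order to order.} Every tree satisfies $|\tau|\ge ht(\tau)$ (Figure~\ref{table:HeightAndSize}). So every tree with $|\tau|\le K$ also has $ht(\tau)\le K$, and therefore $\alpha^{[K]}(\tau)=\beta(\tau)$. By Definition \ref{def:sdc_order}, this means $p_K\ge K$.

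\emph{Main obstacle.} The only delicate point is bookkeeping. Theorem \ref{thm:order_per_sweep} indexes the internal-stage height order as ``$h_K-1$'' for the method with $K$ iterations, so one must align the indices correctly: the internal stages after $k$ iterations need height order $k$, not $k-1$. One must also check that the trivial base case really starts the induction. Beyond that, no computation is required, since the recursion \eqref{eq:ai_prop_sdc} has already been handled inside the theorem.
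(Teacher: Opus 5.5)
Your proof is correct and is essentially the paper's argument. The paper states the corollary as an immediate consequence of Theorem~\ref{thm:order_per_sweep}: induction on the number of iterations, with part~2 propagating the internal-stage height order from the vacuous base case and part~1 giving the output, and then $|\tau|\ge ht(\tau)$ to pass from height order to order. Your side remark that the quadrature output \eqref{eq:SDC_final} actually gives $h_K\ge K+1$ is also correct, though it is not needed for the stated bound.
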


\begin{remark}
    An immediate consequence of Corollary \ref{corr:hk=K} is that we obtain the SDC order $p_K \geq h_K \geq K$ for the SDC method $(\bm{A}_\Delta^0, \bm{A}_{\Delta}^1, \ldots, \bm{A}_{\Delta}^K, \bm{A}, \bm{b})$ which uses any of the EEDs introduced in \cite{CaklovicEtAl2025} and analysed using a linear algebra perspective,
    \begin{equation}
        \label{eq:Caklovic_EEDs}
        \bm{A}_{\Delta_{\mathtt{MIN-SR-NS}}}^k = \diag(\frac{\bm{c}}{s}) \,, \quad \bm{A}_{\Delta_{\mathtt{MIN-SR-FLEX}}}^k = \diag(\frac{\bm{c}}{k}) \,, \quad \bm{A}_{\Delta_{\mathtt{MIN-SR-S}}}^k = \mathop{argmin}_{\bm{A}_\Delta^k = \diag(\hat{\bm{c}})} \lambda_{\max}(\bm{K}^k_S) \,,
    \end{equation}
    where in the third case we minimise the spetral radius $\lambda_{\max}$ of the matrix $\bm{K}_S^k := \bm{I} - (\bm{A}_\Delta^k)^{-1} \bm{A}$.
\end{remark}

\begin{remark}
    \label{rmk:trap_order_jump_Lobatto}
    Table \ref{table:TRAPLOBATTO} shows the convergence order of SDC method with Lobatto nodes and $\bm{Q}_{\Delta_T}$ EED derived from the trapezoidal method. The table demonstrates that the iterations performed after an order jump do not increase the order of the SDC method. This can be explained by the fact the the order jumps do not increase the height order of all internal stages and, therefore, this phenomenon does not contradict Theorem \ref{thm:order_per_sweep}.
\end{remark}

Let us illustrate Theorem \ref{thm:order_per_sweep} with an example of an SDC method with $K=2$ iterations based on Runge-Kutta method with $s=2$ stages.
\begin{example}
    Consider an SDC method $(\bm{A}_\Delta^0, \bm{A}_\Delta^1, \bm{A}_\Delta^2, \bm{A}, \bm{b})$ with $K=2$ with lower triangular EEDs where $( \bm{A}, \bm{b})$ is a $2$ stage Runge-Kutta method. The Butcher tableau \eqref{eq:SDCButcherTableau} of the SDC method has the form
    \[
        \renewcommand\arraystretch{1.5}
        \begin{array}{c|cccccc}
            c_1 & \tilde{a}^{0}_{11} & & & & & \\
            c_2 & \tilde{a}^{0}_{21} & a^{0\Delta}_{22} & & & & \\
            c_1 & a_{11} - \tilde{a}^{1}_{11} & a_{12} & \tilde{a}^{1}_{11} & & & \\
            c_2 & a_{21} - \tilde{a}^{1}_{21} & a_{22} - \tilde{a}^{1}_{22} & \tilde{a}^{1}_{21} & \tilde{a}^{1}_{22} & & \\
            c_1 & & & a_{11} - \tilde{a}^{2}_{11} & a_{12} & \tilde{a}^{2}_{11} & \\
            c_2 & & & a_{21} - \tilde{a}^{2}_{21} & a_{22} - \tilde{a}^{2}_{22} & \tilde{a}^{2}_{21} & \tilde{a}^{2}_{22} \\
            \hline
            & & & & & b_1 & b_2
        \end{array}
    \]

    Expanding the SDC method in B-series,
    \[ u_{n+1} = u_n + B_{hf}(\alpha^{[2]})(u_n), \]
    and comparing with the B-series of the Runge-Kutta method $(\bm{A}, \bm{b})$, we obtain,
    \begin{align*}
        \alpha^{[2]}(\bullet) &= b_1 + b_2 = \beta(\bullet), \\
        \alpha^{[2]}(\forestKB) &= b_1 (a_{11} - \tilde{a}^{2}_{11} + a_{12} + \tilde{a}^{2}_{11}) + b_2 (a_{21} - \tilde{a}^{2}_{21} + a_{22} - \tilde{a}^{2}_{22} + \tilde{a}^{2}_{21} + \tilde{a}^{2}_{22}) \\
        &= b_1 (a_{11} + a_{12}) + b_2 (a_{21} + a_{22}) = \beta(\forestLB), \\
        \alpha^{[2]}(\forestMB) &= b_1 (a_{11} - \tilde{a}^{2}_{11} + a_{12} + \tilde{a}^{2}_{11})^2 + b_2 (a_{21} - \tilde{a}^{2}_{21} + a_{22} - \tilde{a}^{2}_{22} + \tilde{a}^{2}_{21} + \tilde{a}^{2}_{22})^2 \\
        &= b_1 (a_{11} + a_{12})^2 + b_2 (a_{21} + a_{22})^2 = \beta(\forestNB), \\
        \alpha^{[2]}(\forestOB) &= b_1 \big( (a_{11} - \tilde{a}^{2}_{11}) (a_{11} - \tilde{a}^{1}_{11} + a_{12} + \tilde{a}^{1}_{11}) \\
        &\quad + a_{12} (a_{21} - \tilde{a}^{1}_{21} + a_{22} - \tilde{a}^{1}_{22} + \tilde{a}^{1}_{21} + \tilde{a}^{1}_{22}) \\
        &\quad + \tilde{a}^{2}_{11} (a_{11} - \tilde{a}^{2}_{11} + a_{12} + \tilde{a}^{2}_{11}) \big) \\
        &\quad + b_2 \big( (a_{21} - \tilde{a}^{2}_{21}) (a_{11} - \tilde{a}^{1}_{11} + a_{12} + \tilde{a}^{1}_{11}) \\
        &\quad + (a_{22} - \tilde{a}^{2}_{22}) (a_{21} - \tilde{a}^{1}_{21} + a_{22} - \tilde{a}^{1}_{22} + \tilde{a}^{1}_{21} + \tilde{a}^{1}_{22}) \\
        &\quad + \tilde{a}^{2}_{21} (a_{11} - \tilde{a}^{2}_{11} + a_{12} + \tilde{a}^{2}_{11}) \big) \\
        &\quad + \tilde{a}^{2}_{22} (a_{21} - \tilde{a}^{2}_{21} + a_{22} - \tilde{a}^{2}_{22} + \tilde{a}^{2}_{21} + \tilde{a}^{2}_{22}) \big) \\
        &= b_1 \big( (a_{11} - \tilde{a}^{2}_{11}) (a_{11} + a_{12}) + a_{12} (a_{21} + a_{22}) + \tilde{a}^{2}_{11} (a_{11} + a_{12}) \big) \\
        &\quad + b_2 \big( (a_{21} - \tilde{a}^{2}_{21}) (a_{11} + a_{12}) + (a_{22} - \tilde{a}^{2}_{22}) (a_{21} + a_{22}) + \tilde{a}^{2}_{21} (a_{11} + a_{12}) \big) + \tilde{a}^{2}_{22} (a_{21} + a_{22}) \big) \\
        &= b_1 \big( a_{11} (a_{11} + a_{12}) + a_{12} (a_{21} + a_{22}) \big) + b_2 \big(a_{21} (a_{11} + a_{12}) + a_{22} (a_{21} + a_{22}) \big) = \beta(\forestPB).
    \end{align*}
    We see that the equalities are true for any $\bm{A}_\Delta^0, \bm{A}_\Delta^1, \bm{A}_\Delta^2$.
\end{example}

\subsection{Analysis of order jumps}
\label{sec:order_jumps}
In the following, we analyse the phenomenon when the order of accuracy with which the value $u_{n+1}$ is approximated increases by more than one in a single iteration. Such a phenonmenon is called an \emph{order jump}.

Recall that the Runge-Kutta coefficients of collocation methods of order $p$ using Gauss, Radau or Lobatto nodes were chosen to satisfy the following simplifying assumptions~\cite{HairerEtAl1993_nonstiff,HairerEtAl1996_stiff}, see Table \ref{fig:simplifying_assumptions}, called $B(p), C(\eta), D(\zeta)$:
\begin{align*}
    B(p) &:&\quad \sum_{i=1}^s b_i c_i^{q-1} &= \frac{1}{q}, \quad \text{for } q = 1, \dots, p; \\
    C(\eta) &:&\quad \sum_{j=1}^s a_{ij} c_j^{q-1} &= \frac{c_i^q}{q}, \quad \text{for } i = 1, \dots, s, \quad q = 1, \dots, \eta; \\
    D(\zeta) &:&\quad \sum_{i=1}^s b_i c_i^{q-1} a_{ij} &= \frac{b_j}{q} (1 - c_j^q), \quad \text{for } j = 1, \dots, s, \quad q = 1, \dots, \zeta; \\
\end{align*}
with $p \leq \eta + \zeta + 1$ and $p \leq 2 \eta + 2$.
In terms of trees, simplifying assumption $B(p)$ is equivalent to the functional $\beta$ of the collocation method satisfying the order conditions on trees of height $2$ up to size $p$, that is, 
\[ \beta(\forestQB) = \frac{1}{\gamma(\forestRB)}. \]
Simplifying assumption $C(\eta)$ is equivalent to the functional $\beta$ being invariant with respect to the transformation on trees that replaces a branch with $q - 1$ leaves by $q$ leaves for $1 \leq q \leq \eta$ and divides by $q$, that is, 
\[ \beta(\forestSB) = \frac{1}{q} \beta(\forestTB), \]
where $\forestUB$ is a placeholder for the rest of the tree.
Simplifying assumption $D(\zeta)$ is equivalent to the functional $\beta$ being invariant with respect to the following transformation on trees with $q-1$ leaves at the root,
\[ \beta(\forestVB) = \frac{1}{q} \beta(\forestWB - \forestXB),  \]
where $\forestYB$ is a placeholder for one or multiple branches. See \cite{HairerEtAl1993_nonstiff,HairerEtAl1996_stiff} for details.

\begin{figure}
    \centering
    \begin{tabular}{ | c || c c c | c | }
        \hline
        Method & \multicolumn{3}{c|}{Simplifying assumptions} & order \\
        \hline
        \hline
        Gauss & $B(2s)$ & $C(s)$ & $D(s)$  & $2s$ \\
        Radau IA & $B(2s - 1)$ & $C(s - 1)$ & $D(s)$  & $2s - 1$ \\
        Radau IIA & $B(2s - 1)$ & $C(s)$ & $D(s - 1)$  & $2s - 1$ \\
        Lobatto IIIA & $B(2s - 2)$ & $C(s)$ & $D(s - 2)$  & $2s - 2$ \\
        Lobatto IIIB & $B(2s - 2)$ & $C(s - 2)$ & $D(s)$  & $2s - 2$ \\
        Lobatto IIIC & $B(2s - 2)$ & $C(s - 1)$ & $D(s - 1)$  & $2s - 2$ \\
        \hline
    \end{tabular}
    \caption{Simplifying assumptions and corresponding order of collocation methods with $s$ stages.}
    \label{fig:simplifying_assumptions}
\end{figure}
Given an EED $\bm{A}_\Delta = (\tilde{a}_{ij})_{i,j=1}^s$ and two sets of constants $W = (W_q)_{q\in \mathbb{N}}$ and $Y = (Y_q)_{q\in \mathbb{N}}$, let us introduce assumptions on $\bm{A}_\Delta$ defined as 
\begin{align*}
    C_W(\eta) &:&\quad \sum_{j=1}^s \tilde{a}_{ij} c_j^{q-1} &= c_i^q W_q, \quad \text{for } i = 1, \dots, s, \quad q = 1, \dots, \eta \,; \\
    D_Y(\zeta) &:&\quad \sum_{i=1}^s b_i c_i^{q-1} \tilde{a}_{ij} &= \sum_{j=1}^s b_j c_j^q Y_q, \quad \text{for } j = 1, \dots, s, \quad q = 1, \dots, \zeta \,.
\end{align*}
We note that $C(\eta)$ is a particular case of $C_W(\eta)$, however, $D(\zeta)$ is not a particular case of $D_Y(\zeta)$.
The main result of this section is stated in Theorem \ref{thm:order_jump} which is proved in Section \ref{sec:collocation_order_jump}.

\begin{customthm}{\ref{thm:order_jump}}
    Consider SDC with $K+1$ iterations which approximates a collocation method. Let $K^{th}$ iteration of SDC be of order $p_K$. We have $p_{K+1} = \min\{p_K + 2, p\}$ if one of the following 
    assumptions is satisfied:
    \begin{enumerate}
        \item $p_K < \eta_{K+1}$ and $\bm{A}_\Delta^k$ satisfies $C_{W_k}(\eta_k)$ with
             \[ 1 \leq \eta_k \leq \eta, \quad \eta_{k+1} \leq \eta_k + 1, \quad \text{for } k = 1, \dots, K + 1. \]
             where $W_{K+1,p_K + 1} = \frac{1}{p_K + 1}$.
        \item $\eta_{K+1} \leq p_K < \eta_{K+1} + \zeta_{K+1}$ and $\bm{A}_\Delta^k$ satisfies $C_{W_k}(\eta_k)$ and $D_{Y_k}(\zeta_k)$ with
            \[ 1 \leq \eta_k \leq \eta, \quad \zeta_k \leq \zeta, \quad \eta_{k+1} \leq \eta_k + 1, \quad \zeta_k \leq \eta_k + 1, \quad \text{for } k = 1, \dots, K + 1. \]
            where $W_{K+1,p_K + 1} = \frac{1}{p_K + 1}$ and $Y_{K+1,q} = \frac{1}{p_K + 1}$ for all $q = 1, \dots, \zeta_{K+1}$.
    \end{enumerate}

    In particular, there exists a unique diagonal EED $\bm{A}_\Delta^{k} = \frac{\diag(\mathbf{c})}{2k}$ for all iterations $k$ such that the order of SDC increases by $2$ per iteration.
\end{customthm}

\subsubsection{Order jumps for linear problems}
In this subsection, we focus on understanding the phenomenon of order jumps of SDC approximating a general Runge-Kutta method. We prove Proposition \ref{prop:general_order_jump} which introduces a necessary and sufficient condition \eqref{eq:general_condition} on the EED $\bm{A}_\Delta$ which results in an order jump when the problem is linear, that is, $f : \mathbb{R}^d \to \mathbb{R}^d$ of \eqref{IVP} is a degree one polynomial. We note that for a general problem, the condition \eqref{eq:general_condition} is necessary, but not sufficient.

A bamboo tree is a tree which does not have any branches. 
It is the only tree whose size is equal to its height see Fig. \ref{table:HeightAndSize} and the only tree which appears in the B-series of Runge-Kutta methods and the exact solution when the problem is linear. 
A list of all bamboo trees up to size (and height) $5$ can be found below,
\[ \forestAC \,, \quad \forestBC \,, \quad \forestCC \,, \quad \forestDC \,, \quad \forestEC \,. \]
We use bamboo trees to derive the condition on $\bm{A}_\Delta$ for the order to be increased by $2$ per iteration. Let the bamboo tree of hight $h$ be denoted by
\[ \forestFC_{h}. \]
Let us define the coefficient error of the $i^{th}$ stage of SDC with $K$ iterations to be
\[ \epsilon^{[K,i]} := \beta^{[i]} - \alpha^{[K,i]}.\]
We use \eqref{eq:ai_prop_sdc} to prove Proposition \ref{prop:general_order_jump} which derives a condition on the EED $\bm{A}_\Delta$ that results in an order jump.
Note that for linear problems, order and height order agree.

\begin{proposition}\label{prop:general_order_jump}
    Let the internal stages of SDC with $K$ iterations be of order $\tilde{h}_K$. An EED $\bm{A}_\Delta = (\tilde{a}_{ij})_{i,j=1}^s$ which results in SDC with $K+1$ iterations and internal stages of order $\tilde{h}_K+2$ satisfies the following condition 
    \begin{equation}
        \label{eq:general_condition}
        \sum_{j=1}^s \tilde{a}_{ij} \epsilon^{[K,j]}(\forestGC_{\tilde{h}_K+1}) = \sum_{j=1}^s a_{ij} \epsilon^{[K,j]}(\forestHC_{\tilde{h}_K+1}).
    \end{equation}
    For a linear problem, condition \eqref{eq:general_condition} is sufficient to guarantee an order jump.
\end{proposition}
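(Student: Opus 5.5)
We propose to work entirely with the error functionals $\epsilon^{[K,i]}=\beta^{[i]}-\alpha^{[K,i]}$ and to evaluate them on bamboo trees. Write $\epsilon^{[K,j]}_h$ for $\epsilon^{[K,j]}$ evaluated at the bamboo tree of height $h$. The bamboo of height $h+1$ is $B^+$ applied to the bamboo of height $h$, so the products over children in \eqref{eq:ai_prop} and \eqref{eq:ai_prop_sdc} reduce to a single factor. Subtracting \eqref{eq:ai_prop_sdc} from \eqref{eq:ai_prop} then gives the linear recursion
\begin{equation*}
\epsilon^{[K+1,i]}_{h+1} = \sum_{j=1}^s (a_{ij}-\tilde a_{ij})\,\epsilon^{[K,j]}_{h} + \sum_{j=1}^s \tilde a_{ij}\,\epsilon^{[K+1,j]}_{h}.
\end{equation*}
This identity is the only computational input. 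In vector form it reads $\bm{\epsilon}^{K+1}_{h+1} = (\bm{A}-\bm{A}_\Delta)\bm{\epsilon}^{K}_h + \bm{A}_\Delta \bm{\epsilon}^{K+1}_h$.

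\textbf{Step 1 (the first gained order is automatic).} By hypothesis $\epsilon^{[K,j]}_h=0$ for $h\le \tilde h_K$. Theorem \ref{thm:order_per_sweep} gives $\epsilon^{[K+1,j]}_h=0$ for $h\le \tilde h_K+1$. Hence, for the stages of iteration $K+1$, the first possibly nonzero bamboo error is at height $\tilde h_K+2$.

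\textbf{Step 2 (evaluate the recursion at the critical height).} Put $h=\tilde h_K+1$ in the recursion. The term $\bm{A}_\Delta\bm{\epsilon}^{K+1}_{\tilde h_K+1}$ vanishes by Step 1, so
\begin{equation*}
\epsilon^{[K+1,i]}_{\tilde h_K+2} = \sum_{j=1}^s (a_{ij}-\tilde a_{ij})\,\epsilon^{[K,j]}_{\tilde h_K+1}.
\end{equation*}
If the internal stages after $K+1$ iterations have order $\tilde h_K+2$, the left side must vanish for every $i$. That is exactly \eqref{eq:general_condition}, which proves necessity. Necessity holds for general (nonlinear) $f$, since order $\tilde h_K+2$ in particular requires agreement on the bamboo of size $\tilde h_K+2$.

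\textbf{Step 3 (sufficiency for linear problems).} For linear $f$, only bamboo trees contribute to the B-series, because elementary differentials with a vertex of two or more children involve $f''=0$. Order and height order therefore coincide, and order $\tilde h_K+2$ of the stages is equivalent to $\epsilon^{[K+1,i]}_h=0$ for all $h\le \tilde h_K+2$. Heights up to $\tilde h_K+1$ are covered by Step 1, and height $\tilde h_K+2$ by Step 2 together with \eqref{eq:general_condition}. To state this as an order jump for the output, we then apply the first part of Theorem \ref{thm:order_per_sweep}, restricted to bamboos. Stages of height order $\tilde h_K+2$ give $\alpha^{[K+1]}=\beta$ on bamboos of height up to $\tilde h_K+3$, via $\alpha^{[K+1]}(B^+\tau)=\sum_i b_i\alpha^{[K+1,i]}(\tau)$. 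This is two orders beyond what $K$ iterations guaranteed.

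\textbf{Main obstacle.} The argument is short, so the main difficulty is bookkeeping rather than analysis. We must check that the restriction to bamboos is legitimate: in the linear case every non-bamboo tree has zero elementary differential. We must also fix precisely which indices are meant by $\tilde h_K$ and $\tilde h_K+1$ in \eqref{eq:general_condition}, so that the induction on height in Theorem \ref{thm:order_per_sweep} is applied to the $(K+1)$-th stages at exactly the height needed to kill the $\bm{A}_\Delta\bm{\epsilon}^{K+1}$ term. Finally, we should remark that for nonlinear $f$ sufficiency fails. Non-bamboo trees of size $\tilde h_K+2$ and height at most $\tilde h_K+1$ are handled by Theorem \ref{thm:order_per_sweep}, but trees such as a bamboo with an extra leaf are not controlled by \eqref{eq:general_condition}.
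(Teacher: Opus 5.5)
Your proof is correct and is essentially the paper's own argument: evaluate \eqref{eq:ai_prop_sdc} on the bamboo of height $\tilde h_K+2$, use Theorem~\ref{thm:order_per_sweep} to replace $\alpha^{[K+1,j]}$ by $\beta^{[j]}$ at height $\tilde h_K+1$, and compare with \eqref{eq:ai_prop}. Your $\epsilon$-recursion is just a compact repackaging of this, and your Step~3 spells out the linear sufficiency that the paper leaves implicit. One slip is in your closing aside: you say the non-bamboo trees of size $\tilde h_K+2$ are handled by Theorem~\ref{thm:order_per_sweep}, but from order (rather than height order) $\tilde h_K$ that theorem only guarantees order $\tilde h_K+1$ at iteration $K+1$. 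So it is exactly those trees (e.g.\ a bamboo with an extra leaf) that escape control for nonlinear $f$. Under height order $\tilde h_K$ they are covered, and one jump does follow, as in Proposition~\ref{prop:single_order_jump}.
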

\begin{proof}
    Assume iteration $K+1$ performs an order jump, then, the coefficient map of the resulting SDC method must satisfy
    \[
        \alpha^{[K+1,i]} (\forestIC_{\tilde{h}_K+2}) = \beta^{[i]} (\forestJC_{\tilde{h}_K+2}).
    \]
    We use the property \eqref{eq:ai_prop_sdc} to derive the condition on $\tilde{a}_{ij}$, 
    \[ 
        \sum_{j=1}^s \big( a_{ij} - \tilde{a}_{ij} \big) \alpha^{[K,j]}(\forestKC_{\tilde{h}_K+1}) + \sum_{j=1}^s \tilde{a}_{ij} \alpha^{[K+1,j]} (\forestLC_{\tilde{h}_K+1}) = \beta^{[i]}(\forestMC_{\tilde{h}_K+2}),
    \]
    We recall Theorem \ref{thm:order_per_sweep} and group the $\tilde{a}_{ij}$ terms which gives,
    \[ 
        \sum_{j=1}^s a_{ij} \alpha^{[K,j]}(\forestNC_{\tilde{h}_K+1}) + \sum_{j=1}^s \tilde{a}_{ij} \epsilon^{[K,j]}(\forestOC_{\tilde{h}_K+1}) = \beta^{[i]}(\forestPC_{\tilde{h}_K+2}),
    \]
    We use the property \eqref{eq:ai_prop} to obtain
    \[ 
        \sum_{j=1}^s \tilde{a}_{ij} \epsilon^{[K,j]}(\forestQC_{\tilde{h}_K+1}) = \sum_{j=1}^s a_{ij} \epsilon^{[K,j]}(\forestRC_{\tilde{h}_K+1}),
    \]
    and the statement is proved.
\end{proof}

\begin{remark}
    \label{rmk:unique_EED}
     If the EED $\bm{A}_\Delta$ is diagonal, then, the only EED satisfying \eqref{eq:general_condition} is 
    \[ \tilde{a}_{ii} = \sum_{j=1}^s a_{ij} \frac{\epsilon^{[K,j]}}{\epsilon^{[K,i]}} (\forestSC_{\tilde{h}_K+1}) \,. \]
    Since there exists a unique diagonal EED which results in an order $2$ jump, we cannot guarantee that the same EED results in an order $3$ jump, see Example \ref{ex:orderJumps}.
\end{remark}

\begin{example}\label{ex:orderJumps}
    Suppose an SDC method with $K$ iterations, internal stages of order $\tilde{h}_K$, and $\tilde{a}_{ii}^{k} = l(k,i)c_i$ for some constants $l(k,i) \in \mathbb{R}$ and $\tilde{a}_{ij}^{k} = 0$ for $i \neq j$. By Proposition \ref{prop:general_order_jump}, we require  $\alpha^{[K+1, i]}(\forestTC_{\tilde{h}_K+2}) = \beta^{[i]}(\forestUC_{\tilde{h}_K+2}) $ to increase the order of the method by $2$.
From the Butcher tableau we obtain
\begin{equation*}
    \alpha^{[k+1, i]}(\forestVC_{\tilde{h}_K+2}) = \sum_{j=1}^{s} a_{ij} \alpha^{[k, j]} (\forestWC_{\tilde{h}_K+1}) - \tilde{a}_{ii}^{k+1} \alpha^{[k, i]}(\forestXC_{\tilde{h}_K+1}) + \tilde{a}_{ii}^{k+1} \alpha^{[k+1, i]} (\forestYC_{\tilde{h}_K+1}).
\end{equation*}
Let's begin with the first iteration, given a copied initial state, i.e. $u_n^{[0,i]} = u_n$. We then require
\begin{equation*}
\alpha^{[0+1, i]}(\forestAD) = \beta^{[i]}(\forestBD).
\end{equation*}
From above we have
\begin{equation*}
\alpha^{[0+1, i]}(\forestCD) = \sum_{j=1}^s a_{ij} \alpha^{[0,j]}(\forestDD) - \tilde{a}_{ii}^{1}\alpha^{[0,i]}(\forestED) + \tilde{a}_{ii}^{1}\alpha^{[1, i]}(\forestFD).
\end{equation*}
Since $\alpha^{[0,j]}(\forestGD) = 0$ and $c_i = \sum_{j=1}^s a_{ij} = \beta^{[i]}(\forestHD)$ we obtain
\begin{equation*}
\alpha^{[0+1, i]}(\forestID) = l(1,i) c_i \alpha^{[1, i]}(\forestJD) = l(1,i) \sum_{j=1}^s a_{ij} \sum_{j=1}^s (a_{ij} -\tilde{a}_{ij}^{1} + \tilde{a}_{ij}^{1}) = l(1,i) \beta^{[i]}(\forestKD),
\end{equation*}
which is equivalent to
\begin{equation*}
	l(1,i) = \frac{\sum_{j=1}^s a_{ij} c_j}{c_i^2} =  \frac{1}{2} \frac{c_i^2}{c_i^2}  = \frac{1}{2} = l(1),
\end{equation*}
where the second equality uses the assumption that internal stages are of order at least $2$. Hence to increase the order by 2 in the first iteration we require $\tilde{a}_{ii}^{1} = c_i/2$. 
Let us check that we cannot perform an order $3$ jump,
\begin{equation*}
    \alpha^{[1, i]}(\forestLD) = l(1) c_i \alpha^{[1, i]}(\forestMD) = c_i^3 l^2(1) = \frac{c_i^3}{4} \neq \beta^{[i]} (\forestND) \,,
\end{equation*}
if we assume the internal stages of the collocation method to be of order at least $3$.

Next, consider an SDC method with order 1 after the first iteration and the same assumptions as above. We require 
\begin{equation*}
\alpha^{[1+1, i]}(\forestOD) = \beta^{[i]}(\forestPD),
\end{equation*}
which by looking at the Butcher tableau can be written as
\begin{align*}
\alpha^{[1+1, i]}(\forestQD) &= \sum_{j=1}^s a_{ij} \alpha^{[1,j]}(\forestRD) - \tilde{a}_{ii}^{2}\alpha^{[1,i]}(\forestSD) + \tilde{a}_{ii}^{2}\alpha^{[2, i]}(\forestTD) \\
&= \sum_{j=1}^s a_{ij} l(1) \beta^{[i]}(\forestUD) - l(2,i) \alpha^{[1,i]}(\forestVD) + l(2, i) \alpha^{[2,i]}(\forestWD) \\
&= \sum_{j=1}^s a_{ij} l(1) \beta^{[j]}(\forestXD) - l(2,i)l(1) \beta^{[i]}(\forestYD) + l(2,i)\alpha^{[2,i]}(\forestAE) \\
&= \sum_{j=1}^s a_{ij} l(1) \beta^{[j]}(\forestBE) - l(2,i)l(1) \beta^{[i]}(\forestCE) + l(2,i) \beta^{[i]}(\forestDE)\\
&= l(1) \beta^{[i]}(\forestEE) - l(2,i)l(1)\beta^{[i]}(\forestFE) + l(2,i)\beta^{[i]}(\forestGE) \\
&= \beta^{[i]}(\forestHE).
\end{align*}
Substituting the corresponding summations yields
\begin{align*}
	\sum_{j,k=1}^s a_{ij}a_{jk}c_k &= l(1) \sum_{j=1} a_{ij}c_j^2 - l(2,i)l(1)c_i^3 + l(2,i) c_i \sum_{j=1}^s a_{ij}c_j\\
	&= l(1)\frac{c_i^3}{3} + l(2,i)\left(\frac{c_i^3}{2} - l(1)c_i^3 \right).
\end{align*}
Note that the left hand side can be reduced to $\sum_{j,k=1}^s a_{ij}a_{jk}c_k = c_i^3/6$ by assuming order at least $3$ for the internal stages. There are two cases to be taken care of. The first one is $l(1)=0.5$. Then the equation above holds true and $l(2,i)$ vanishes due to independence of $\bm{A}_\Delta$. In the second case $l(1) \neq 1/2$. Solving for $l(2)$ yields
\begin{equation*}
	l(2) = \frac{1}{3}.
\end{equation*}
We note that the nodes $c_i$ and the constant $l(1)$ cancel, and hence, l(2) is independent of constants previously chosen. 
We finish this example by describing how to go from second to fourth order in the second iteration using a diagonal EED.
We require
\begin{equation}
	\alpha^{[2, i]}(\forestIE) = \sum_{j=1}^s a_{ij}\alpha^{[1,j]}(\forestJE) - \tilde{a}_{ii}^{2}\alpha^{[1,i]}(\forestKE) +  \tilde{a}_{ii}^{2}\alpha^{[2,i]}(\forestLE)
\label{req4}
\end{equation}
Since the tree in the last term is once again equal to that of the underlying collocation rule we only need to find
\begin{align*}
	\alpha^{[1,i]}(\forestME) &= \sum_{j=1}^s a_{ij}\alpha^{[0,j]}(\forestNE) - \tilde{a}_{ii}^{1}\alpha^{[0,i]}(\forestOE) + \tilde{a}_{ii}^{1}\alpha^{[1,i]}(\forestPE)\\
	 &= \tilde{a}_{ii}^{1}\beta^{[i]}(\forestQE)  \\
	 &= \tilde{a}_{ii}^{1} \frac{c_i^2}{2} = \frac{c_i^3}{4},
\end{align*}
where in the last equality we used $\tilde{a}_{ii}^{1} = c_i/2$. Hence \eqref{req4} can be simplified to 
\begin{align*}
	\frac{c_i^4}{24} &= \sum_{j=1}^s a_{ij}\frac{c_j^3}{4} - \tilde{a}_{ii}^{2} \frac{c_i^3}{4} + \tilde{a}_{ii}^{2} \frac{c_i^3}{6} \\
				&= \frac{c_i^4}{16} + \tilde{a}_{ii}^{2}(\frac{c_i^3}{6} - \frac{c_i^3}{4})
\end{align*}
Once again letting $\tilde{a}_{ii}^{2} = l(2,i)c_i$, yields
\begin{equation*}
	l(2,i) = l(2) = \frac{1}{4}.
\end{equation*}
A pattern seems to be emerging that requires $l(k)=1/(2k)$ in order to obtain the jumps in each iteration.
Continuing in this manner reveals an emerging pattern that requires $l(k)=1/(2k-v)$ in order to increase the order by 2 in the next iteration, where $v$ is number of iterations without order jumps.
\end{example}

For non-linear problems, the condition \eqref{eq:general_condition} is sufficient to perform an order jump only once. This is detailed in Proposition \ref{prop:single_order_jump}, see also Corollary \ref{corr:hk=K}.
\begin{proposition}
    \label{prop:single_order_jump}
    Consider SDC with $K$ iterations and internal stages of height order $\tilde{h}_k$, performing an iteration with an EED satisfying \eqref{eq:general_condition} results in SDC with $K+1$ iterations and internal stages of height order $\tilde{h}_{K+1} = \tilde{h}_K + 1$ and order $\tilde{p}_{K+1} \geq \tilde{h}_k + 2$.
\end{proposition}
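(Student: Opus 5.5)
The argument splits into the height-order statement and the order statement. Both use the recursion \eqref{eq:ai_prop_sdc} together with the error map $\epsilon^{[K,i]} = \beta^{[i]} - \alpha^{[K,i]}$.

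\emph{Height order.} The inequality $\tilde{h}_{K+1} \geq \tilde{h}_K + 1$ is Theorem~\ref{thm:order_per_sweep}, applied with $\bm{A}^{K+1}_\Delta$ the chosen EED. To get equality it suffices to show that, for a general EED satisfying \eqref{eq:general_condition}, some non-bamboo tree of height $\tilde{h}_K+2$ is not reproduced. I would write the recursion as
\[
\epsilon^{[K+1,i]}(B^+(\omega)) = \sum_j (a_{ij}-\tilde a_{ij})\,\epsilon^{[K,j]}(\omega) + \sum_j \tilde a_{ij}\,\epsilon^{[K+1,j]}(\omega)
\]
for forests $\omega$. Take $\tau = B^+(\forestGC_{\tilde h_K+1}\,\bullet)$, or more generally the bamboo of height $\tilde h_K+1$ with extra leaves attached below its top. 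Since $\epsilon^{[K+1,j]}$ vanishes on the bamboo of height $\tilde h_K+1$, expanding the product of the forest in $\epsilon$ (using multiplicativity of $\alpha$ and $\beta$ and the fact that $\epsilon$ vanishes on smaller trees) leaves
\[
\sum_j (a_{ij}-\tilde a_{ij})\, c_j\, \epsilon^{[K,j]}(\forestGC_{\tilde h_K+1}).
\]
Condition \eqref{eq:general_condition} only balances the unweighted sum, so this weighted sum is nonzero generically. This shows that equality is the generic statement; I would present it as ``exactly $+1$ in general'', supported by the trapezoidal or diagonal example of Example~\ref{ex:orderJumps}.

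\emph{Order.} For $\tilde p_{K+1} \geq \tilde h_K+2$, I need $\epsilon^{[K+1,i]}(\tau)=0$ for all $|\tau|\le \tilde h_K+2$. Trees of height $\le \tilde h_K+1$ are handled by the first part. A tree of size $\le \tilde h_K+2$ with height exactly $\tilde h_K+2$ must have size equal to its height, so it is the bamboo $\forestGC_{\tilde h_K+2}$. For that tree the computation in the proof of Proposition~\ref{prop:general_order_jump} applies verbatim and shows the error vanishes. That proof never used linearity: it only used the recursion and $\epsilon^{[K+1,j]}(\forestGC_{\tilde h_K+1})=0$, which comes from Theorem~\ref{thm:order_per_sweep}.

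I expect the main obstacle to be the precise meaning of ``order'' versus ``height order'' at the boundary. One must check that every tree of size $\tilde h_K+2$ other than the bamboo has height $\le \tilde h_K+1$, which is immediate from $|\tau|\ge ht(\tau)$ with equality only for bamboos. One must also check that the hypothesis ``internal stages of order $\tilde h_K$'' is strong enough to give height order $\tilde h_K$, which Theorem~\ref{thm:order_per_sweep} needs. I would state the hypothesis as height order $\tilde h_K$, which is how the proposition is phrased. Order $\tilde p_{K+1}\ge \tilde h_K+2$ for the stages then transfers to $u_{n+1}$ via the $\bm b$-recursion, exactly as in Theorem~\ref{thm:order_per_sweep}.
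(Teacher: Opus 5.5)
Your argument for $\tilde p_{K+1}\geq \tilde h_K+2$ is the paper's proof. Theorem~\ref{thm:order_per_sweep} gives agreement on every tree of height at most $\tilde h_K+1$. The only tree of size $\tilde h_K+2$ left over is the bamboo. The recursion \eqref{eq:ai_prop_sdc} together with \eqref{eq:general_condition} kills the error on that bamboo, with no use of linearity.

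On the equality $\tilde h_{K+1}=\tilde h_K+1$, the paper only invokes Theorem~\ref{thm:order_per_sweep}, which gives $\geq$ and nothing more. Your error formula on $B^+(\text{bamboo}_{\tilde h_K+1}\,\bullet)$, namely $\sum_j (a_{ij}-\tilde a_{ij})\,c_j\,\epsilon^{[K,j]}(\text{bamboo}_{\tilde h_K+1})$, is correct. It shows exactly why the condition does not force equality.

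Your ``generic'' hedge is not a gap in your argument but is forced: the equality is false in general. Take $\bm{A}_\Delta=\bm{A}$. It satisfies \eqref{eq:general_condition} trivially, and the recursion collapses to $\alpha^{[K+1,i]}(B^+(\pi))=\sum_j a_{ij}\,\alpha^{[K+1,j]}(\pi)$. Hence $\alpha^{[K+1,i]}=\beta^{[i]}$ on every tree, so the height order is infinite. The defensible statement is $\tilde h_{K+1}\geq \tilde h_K+1$ with equality generically, which is what you propose.
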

\begin{proof}
    We use Theorem \ref{thm:order_per_sweep} and the relation between hight order and order to see that $\tilde{p}_{K+1} \geq \tilde{h}_{K+1} = \tilde{h}_K + 1$. This implies that, $\alpha^{[K+1,i]} (\tau) = \beta^{[i]} (\tau)$ for all $ht(\tau) \leq \tilde{h}_K + 1$. We note that the only tree of size $\tilde{h}_K + 2$ whose height is greater than $\tilde{h}_K + 1$ is the bamboo tree. Condition \eqref{eq:general_condition} ensures that $\alpha^{[K+1,i]} (\tau) = \beta^{[i]} (\tau)$ for $\tau$ being a bamboo of size $\tilde{h}_K + 2$ which implies order $\tilde{p}_K \geq \tilde{h}_K + 2$.
\end{proof}

\subsubsection{Order jumps for non-linear problems}\label{sec:collocation_order_jump}
In this subsection, we derive the necessary and sufficient conditions on $\bm{A}_\Delta$ for an order jump using the simplifying assumptions $B(p), C(\eta), D(\zeta)$ of Gauss, Lobatto, and Radau collocation methods described in the beginning of Section \ref{sec:order_jumps}.
From now on, assume the SDC method $(\bm{A}_\Delta^0, \dots, \bm{A}_\Delta^K, \bm{A}, \bm{b})$ is used to approximate a collocation Runge-Kutta method $(\bm{A}, \bm{b})$ of order $p$ satisfying $B(p)$, $C(\eta)$, and $D(\zeta)$.

\begin{lemma}\label{lemma:ai_eta_form}
    Consider an SDC with $K$ iterations such that $\bm{A}_\Delta^k$ satisfies $C_{W_k}(\eta_k)$ with
    \[ 1 \leq \eta_k \leq \eta, \quad \eta_{k+1} \leq \eta_k + 1, \quad \text{for } k = 1, \dots, K.\]
    Then, given a tree $\tau$ with $|\tau| \leq \eta_K$, we have,
    \begin{equation}\label{eq:ai_eta_form}
        \alpha^{[K,i]} (\tau) = c_i^{|\tau|} \omega_K (\tau).
    \end{equation}
\end{lemma}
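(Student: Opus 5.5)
We argue by induction on the iteration index $k = 0, 1, \dots, K$ and, for fixed $k$, by induction on the tree $\tau = B^+(\tau_1 \cdots \tau_n)$. The first task is to make $\omega_k : T \to \mathbb{R}$ explicit, since the statement leaves it implicit. Guided by \eqref{eq:ai_prop_sdc}, $C(\eta)$ and $C_{W_k}(\eta_k)$, we define it recursively by
\[ \omega_k(\tau) = \Big(\frac{1}{|\tau|} - W_{k,|\tau|}\Big) \prod_{m=1}^n \omega_{k-1}(\tau_m) + W_{k,|\tau|} \prod_{m=1}^n \omega_k(\tau_m), \]
with $\omega_0 \equiv 0$ on trees, so that $\alpha^{[0,i]}(\tau)=0$ for the constant initial guess. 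The empty product equals $1$, so $\omega_k(\bullet) = 1$ for $k \ge 1$. In this recursion $W_{k,q}$ denotes the constant of $C_{W_k}$ for $q = |\tau|$. The base case $k=0$ is immediate when $\bm{A}^0_\Delta$ plays no role. If $\bm{A}^0_\Delta$ is used as in Remark \ref{rmk:init}, we treat iteration $1$ directly; its recursion has the same shape with $\alpha^{[0,j]}$ evaluated on the empty forest $\mathbf{1}$ equal to $1$.

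For the inductive step, fix $k \le K$ and a tree $\tau$ with $|\tau| \le \eta_k$. Its children satisfy $|\tau_m| \le |\tau| - 1 \le \eta_k - 1 \le \eta_{k-1}$, where we use $\eta_k \le \eta_{k-1}+1$. The induction hypothesis at level $k-1$ therefore gives $\alpha^{[k-1,j]}(\tau_1\cdots\tau_n) = c_j^{|\tau|-1}\prod_m \omega_{k-1}(\tau_m)$, using multiplicativity and $\sum_m|\tau_m| = |\tau|-1$. Similarly, the inner induction on trees at level $k$ gives $\alpha^{[k,j]}(\tau_1\cdots\tau_n) = c_j^{|\tau|-1}\prod_m\omega_k(\tau_m)$, because $|\tau_m| < |\tau| \le \eta_k$. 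We substitute these into \eqref{eq:ai_prop_sdc} and split $\sum_j (a_{ij} - \tilde a^k_{ij}) c_j^{q-1}$ with $q = |\tau|$. Then $C(\eta)$ with $q \le \eta_k \le \eta$ gives $\sum_j a_{ij}c_j^{q-1} = c_i^q/q$, and $C_{W_k}(\eta_k)$ gives $\sum_j\tilde a^k_{ij}c_j^{q-1} = c_i^q W_{k,q}$. Collecting terms yields $\alpha^{[k,i]}(\tau) = c_i^{|\tau|}\omega_k(\tau)$ with $\omega_k$ as defined above.

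The main obstacle is bookkeeping at the index boundaries. The hypothesis at level $k-1$ must be available for all trees of size up to $\eta_k - 1$, and this is exactly what $\eta_k \le \eta_{k-1}+1$ guarantees. At $k=1$ we need a convention for $\eta_0$: all sizes are allowed because $\alpha^{[0,\cdot]}$ vanishes on trees. We also need $\eta_k \le \eta$ so that $C(\eta)$ applies. I would state the result for every $k \le K$ simultaneously, so that the induction closes, and remark that $\omega_k$ depends only on $\tau$ and the constants $W$, not on $i$.
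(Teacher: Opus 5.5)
Your proof is correct and follows the same route as the paper. Both argue by double induction on the iteration index and the tree size, using \eqref{eq:ai_prop_sdc}, $C(\eta)$ and $C_{W_k}(\eta_k)$ to reach the paper's recursion $\omega_k(\tau) = \frac{1}{|\tau|}\omega_{k-1}(\pi) - W_{k,|\tau|}\omega_{k-1}(\pi) + W_{k,|\tau|}\omega_k(\pi)$, and your convention $\omega_0 \equiv 0$ on trees matches the paper's assumption $\bm{A}^0_\Delta = \bm{0}$. Your explicit check $|\tau_m| \le \eta_k - 1 \le \eta_{k-1}$ supplies bookkeeping the paper leaves implicit; your aside about a nonzero $\bm{A}^0_\Delta$ is unnecessary and, as phrased, imprecise, since it would need $\bm{A}^0_\Delta$ to satisfy a $C_{W_0}$-type condition.
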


In the case, when $\bm{A}_\Delta^K$ is a diagonal matrix, it must be equal to $\bm{A}_\Delta^K = \diag(\mathbf{c}) W_K$ for some constant $W_K$ to satisfy $C_{W_K}(\eta_K)$ for $\eta_K \geq 1$ where $W_{K,q} = W_K$ for all $q \in \mathbb{N}$.
Note that the statement \eqref{eq:ai_eta_form} can be extended multiplicatively from trees to forests with the functional $\omega_K$ respecting the concatenation product, that is, $\omega_K(\pi_1 \cdot \pi_2) = \omega_K(\pi_1) \omega_K(\pi_2)$.

\begin{proof}
    Let us prove the statement for $K = 1$ taking into account that $\bm{A}_\Delta^0 = \bm{0}$ ($\alpha^{[0,i]} = 0$), and, therefore, given a tree $\tau = B^+(\pi)$ such that $1 < |\tau| \leq \eta_1$, we use \eqref{eq:ai_prop_sdc} to obtain
    \[ \alpha^{[1,i]} (\tau) = \sum_{j=1}^s \tilde{a}^{1}_{ij} \alpha^{[1,i]} (\pi) = c_i^{|\tau|} \prod_{l=1}^{|\tau|} W_{1,l}, \]
    where we used induction on the size of the tree and $C_{W_1}(\eta_1)$ to prove the statement for $K=1$.
    We note that for any $K$, $\alpha^{[K,i]}(\bullet) = c_i$. Let $\tau = B^+(\pi)$ such that $|\tau| \leq \eta_K$, we use \eqref{eq:ai_prop_sdc} to write
    \begin{align*}
        \alpha^{[K,i]} (\tau) &= \sum_{j=1}^s a_{ij} \alpha^{[K-1,j]}(\pi) - \sum_{j=1}^s \tilde{a}^{K}_{ij} \alpha^{[K-1,j]} (\pi) + \sum_{j=1}^s \tilde{a}^{K}_{ij} \alpha^{[K,j]} (\pi), \\
        \shortintertext{by induction on the size of the tree and $K$, we get}
        \alpha^{[K,i]} (\tau) &= \sum_{j=1}^s a_{ij} c_j^{|\pi|} \omega_{K-1}(\pi) - \sum_{j=1}^s \tilde{a}^{K}_{ij} c_j^{|\pi|} \omega_{K-1}(\pi) + \sum_{j=1}^s \tilde{a}^{K}_{ij} c_j^{|\pi|} \omega_K (\pi), \\
        \shortintertext{using $C(\eta)$ and $C_{W_K}(\eta_K)$, we get}
        \alpha^{[K,i]} (\tau) &= \frac{1}{|\tau|} c_i^{|\tau|} \omega_{K-1}(\pi) - c_i^{|\tau|} W_{K,|\tau|} \omega_{K-1}(\pi) + c_i^{|\tau|} W_{K,|\tau|} \omega_K (\pi), \\
        &= c_i^{|\tau|} \big(\frac{1}{|\tau|} \omega_{K-1}(\pi) - W_{K,|\tau|} \omega_{K-1}(\pi) + W_{K,|\tau|} \omega_K (\pi) \big).
    \end{align*}
    We obtain the statement \eqref{eq:ai_eta_form} with
    \[ \omega_K (\tau) :=\frac{1}{|\tau|} \omega_{K-1} (\pi) - W_{K,|\tau|} \omega_{K-1} (\pi) + W_{K,|\tau|} \omega_K (\pi),\]
    and the proof is finished.
\end{proof}

\begin{lemma}\label{lemma:ai_gen_form}
    Consider an SDC with $K$ iterations and a tree $\tau = B^+(\pi)$, we have
    \begin{equation}\label{eq:ai_gen_form}
        \alpha^{[K]} (\tau) = \frac{1}{|\tau|} \omega_K (\pi),
    \end{equation}
    if one of the following assumptions is satisfied:
    \begin{enumerate}
        \item $|\tau| \leq \eta_K + 1$ and $\bm{A}_\Delta^k$ satisfies $C_{W_k}(\eta_k)$ with
             \[ 1 \leq \eta_k \leq \eta, \quad \eta_{k+1} \leq \eta_k + 1, \quad \text{for } k = 1, \dots, K. \]
        \item $\eta_K + 1 < |\tau| \leq \eta_K + \zeta_K + 1$ and $\bm{A}_\Delta^k$ satisfies $C_{W_k}(\eta_k)$ and $D_{Y_k}(\zeta_k)$ with
            \[ 1 \leq \eta_k \leq \eta, \quad \zeta_k \leq \zeta, \quad \eta_{k+1} \leq \eta_k + 1, \quad \zeta_k \leq \eta_k + 1, \quad \text{for } k = 1, \dots, K. \]
    \end{enumerate}
\end{lemma}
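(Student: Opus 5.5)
Let $\tau = B^+(\pi)$ with $\pi = \tau_1 \cdots \tau_n$. The starting point is the first line of \eqref{eq:ai_prop_sdc}, which gives
\[ \alpha^{[K]}(\tau) = \sum_{i=1}^s b_i \alpha^{[K,i]}(\pi) = \sum_{i=1}^s b_i \prod_{l=1}^n \alpha^{[K,i]}(\tau_l) . \]
We then treat the two cases of the statement separately. In both, the goal is to show that this weighted sum collapses to $\frac{1}{|\tau|}\omega_K(\pi)$, where $\omega_K$ is the multiplicative functional produced by Lemma \ref{lemma:ai_eta_form}, extended to trees of larger size where necessary.

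\emph{Case 1: $|\tau| \leq \eta_K + 1$.} Every subtree $\tau_l$ satisfies $|\tau_l| \leq |\pi| = |\tau|-1 \leq \eta_K$. Lemma \ref{lemma:ai_eta_form}, extended multiplicatively to forests, therefore gives $\alpha^{[K,i]}(\pi) = c_i^{|\pi|}\omega_K(\pi)$. Substituting,
\[ \alpha^{[K]}(\tau) = \omega_K(\pi)\sum_i b_i c_i^{|\tau|-1} = \frac{1}{|\tau|}\omega_K(\pi). \]
The last step uses $B(p)$, which is valid because $|\tau| \leq \eta_K+1 \leq \eta+1 \leq p$ (recall $p \geq \eta+1$ for the collocation methods in Figure \ref{fig:simplifying_assumptions}; more generally $p \ge \eta$, and one checks the table). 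This case is routine.

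\emph{Case 2: $\eta_K + 1 < |\tau| \leq \eta_K + \zeta_K + 1$.} Here $\pi$ may contain subtrees too large for Lemma \ref{lemma:ai_eta_form}. The plan is to mirror the classical proof that $B$, $C$, $D$ imply order. Inside $\pi$ there is at most one subtree $\tau_1 = B^+(\pi_1)$ with $|\tau_1| > \eta_K$, because $|\pi| \le \eta_K+\zeta_K$ and $\zeta_K \le \eta_K+1$. All remaining subtrees are small, so together they contribute a factor $c_i^{q-1}\omega_K(\cdot)$ with $q-1 = |\pi|-|\tau_1| \leq \zeta_K - 1$.

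We expand $\alpha^{[K,i]}(\tau_1)$ with the recursion \eqref{eq:ai_prop_sdc}:
\[ \alpha^{[K,i]}(\tau_1) = \sum_j (a_{ij}-\tilde a^K_{ij})\,\alpha^{[K-1,j]}(\pi_1) + \sum_j \tilde a^K_{ij}\,\alpha^{[K,j]}(\pi_1). \]
Multiplying by $b_i c_i^{q-1}$ and summing over $i$ lets us apply $D(\zeta)$ to the $a_{ij}$ term and $D_{Y_K}(\zeta_K)$ to the $\tilde a^K_{ij}$ terms. This yields sums over $j$ of $b_j(1-c_j^q)$ times $\alpha^{[K-1,j]}(\pi_1)$ and $\alpha^{[K,j]}(\pi_1)$. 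We recognise each such sum as $\alpha^{[K-1]}$ or $\alpha^{[K]}$ evaluated on the trees $B^+(\pi_1)$ and $B^+(\pi_1 \bullet^q)$. Both trees have size at most $|\tau|$, and each has one fewer "large level" than before, so we apply induction on $|\tau|$, and on $K$ for the $\alpha^{[K-1]}$ terms (the constraint $\zeta_{K-1}$ vs. $\zeta_K$ follows from $\eta_{k+1}\le\eta_k+1$ and needs checking). The induction gives expressions of the form $\frac{1}{m}\omega(\cdot)$. We then define (or verify) $\omega_K(\pi)$ for such large forests by the same recursive formula used in Lemma \ref{lemma:ai_eta_form}, namely $\omega_K(B^+\pi_1) = \frac{1}{|\cdot|}\omega_{K-1} - W\omega_{K-1} + W\omega_K$, and check that the resulting combinations match $\frac{1}{|\tau|}\omega_K(\pi)$.

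The main obstacle is this bookkeeping in Case 2. The constants $Y_{K,q}$ produced by $D_{Y_K}$ must combine with the $D(\zeta)$ terms $\frac{1}{q}(1-c_j^q)$ into the single factor $1/|\tau|$. I would first try the simplest instance, $\pi = \tau_1 \bullet^{q-1}$ with $\tau_1$ just beyond the $C$-range, to find the exact identity needed. If the statement only holds with $\omega_K$ suitably defined on large trees, I would adopt that definition, so that the lemma becomes the consistency statement used later in Theorem \ref{thm:order_jump}.
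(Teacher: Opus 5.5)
Your plan follows the paper's route. Case 1 is exactly its argument. In Case 2 you isolate the unique large child $\tau_1=B^+(\pi_1)$, expand it with \eqref{eq:ai_prop_sdc}, apply $D(\zeta)$ and $D_{Y_K}(\zeta_K)$, and rewrite the result through $\alpha^{[K-1]}$ and $\alpha^{[K]}$ on $\tau_1$ and $B^+(\bullet^q\pi_1)$, as the paper does (its $q+1$ is your $q$). The gap is that you stop exactly at the step that carries the content of Case 2, and both concrete prescriptions you give for completing it would fail.
\begin{itemize}
\item Induction on $|\tau|$ does not reach $B^+(\bullet^q\pi_1)$, whose size is $|\pi_1|+q+1=|\tau_1|+q=|\tau|$. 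The paper inducts on the size of the large branch ($|\pi_1|<|\tau_1|$); that is what your ``one fewer large level'' has to become.
\item $\omega_K$ on the large branch is not given by the $W$-recursion of Lemma~\ref{lemma:ai_eta_form}. The $\tilde a^K_{ij}$ terms have been processed by $D_{Y_K}(\zeta_K)$, not by $C_{W_K}$, so the constant that enters is $Y_{K,q}$.
\end{itemize}

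The missing identity is a telescoping of the $D(\zeta)$ term. The induction hypotheses (together with $\omega(\bullet)=1$ and multiplicativity) give $\alpha^{[K-1]}(\tau_1)=\frac{1}{|\tau_1|}\omega_{K-1}(\pi_1)$, $\alpha^{[K-1]}(B^+(\bullet^q\pi_1))=\frac{1}{|\tau|}\omega_{K-1}(\pi_1)$ and $\alpha^{[K]}(B^+(\bullet^q\pi_1))=\frac{1}{|\tau|}\omega_{K}(\pi_1)$. Then
\[ \frac{1}{q}\Big(\frac{1}{|\tau_1|}-\frac{1}{|\tau_1|+q}\Big)=\frac{1}{|\tau_1|\,|\tau|}, \]
so every term carries the factor $1/|\tau|$ and
\[ \sum_{i=1}^s b_i c_i^{q-1}\alpha^{[K,i]}(\tau_1)=\frac{1}{|\tau|}\Big(\frac{1}{|\tau_1|}\omega_{K-1}(\pi_1)-Y_{K,q}\,\omega_{K-1}(\pi_1)+Y_{K,q}\,\omega_K(\pi_1)\Big). \]
The bracket is the definition of $\omega_K(\tau_1)$, and multiplying by $\omega_K$ of the small children gives \eqref{eq:ai_gen_form}.

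Two of your side claims are also incorrect, though the paper silently passes over the same points.
\begin{itemize}
\item Invoking the statement at level $K-1$ for $B^+(\bullet^q\pi_1)$ needs $|\tau|\le\eta_{K-1}+\zeta_{K-1}+1$. This does \emph{not} follow from $\eta_{k+1}\le\eta_k+1$, because nothing relates $\zeta_{K-1}$ to $\zeta_K$. A further point, which neither you nor the paper addresses: the bracket depends on $q$. So when $\pi_1$ contains a subtree that is large at level $K-1$, the two values of $\omega_{K-1}(\pi_1)$ produced inside $\tau_1$ and inside $B^+(\bullet^q\pi_1)$ need not coincide unless $Y_{K-1,q}$ is independent of $q$.
\item $p\ge\eta+1$ fails, e.g.\ for Lobatto IIIA with $s=2$, where $p=\eta=2$ and $\sum_i b_ic_i^2=\frac12$. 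So Case 1 tacitly requires $|\tau|\le p$.
\end{itemize}
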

\begin{proof}
    For a tree $\tau = B^+(\pi)$ of size $|\tau| \leq \eta_K + 1$, we use Lemma \ref{lemma:ai_eta_form} and $B(p)$ to obtain,
    \begin{equation}\label{eq:use_ai_eta_form}
        \alpha^{[K]}(\tau) = \sum_{i=1}^s b_i \alpha^{[K,i]}(\pi) = \sum_{i=1}^s b_i c_i^{|\pi|} \omega_K (\pi) = \frac{1}{|\tau|} \omega_K (\pi),
    \end{equation}
    which proves the statement.
    For any tree $\tau = B^+(\pi)$ of size $\eta_K + 1 < |\tau| \leq \eta_K + \zeta_K + 1$, if all trees in $\pi$ have size less or equal to $\eta_K$, then \eqref{eq:use_ai_eta_form} proves the statement.

    Assume $\tau = B^+(\pi)$ of size $\eta_K + 1 < |\tau| \leq \eta_K + \zeta_K + 1$ and assume there exists a tree $\gamma$ in $\pi$ such that $|\gamma| \geq \eta_K + 1$, then, $|\pi \setminus \gamma| \leq \zeta_K - 1 \leq \eta_K$, which implies that $\gamma$ is the only tree in $\pi$ such that $|\gamma| \geq \eta_K + 1$.
    Let $q := |\pi \setminus \gamma|$. We use Lemma \ref{lemma:ai_eta_form} to obtain
    \[ \alpha^{[K]} (\tau) = \sum_{i=1}^s b_i \alpha^{[K,i]} (\pi) = \sum_{i=1}^s b_i c_i^q \alpha^{[K,i]} (\gamma) \omega_K (\pi \setminus \gamma) = \alpha^{[K]} \big( B^+(\bullet^q \gamma) \big) \omega_K (\pi \setminus \gamma). \]
    This implies that it is enough to prove the statement for trees of the form $B^+(\bullet^q \gamma)$. We have $q < \zeta_K \leq \zeta$, therefore, we can apply $D_{Y_K}(\zeta_K)$ and $D(\zeta)$. Let $\gamma = B^+(\pi_\gamma)$, then, using \eqref{eq:ai_prop_sdc},
    \begin{align*}
        \alpha^{[K]} \big( B^+(\bullet^q \gamma) \big) &= \sum_{i,j=1}^s b_i c_i^q a_{ij} \alpha^{[K-1,j]}(\pi_\gamma) - \sum_{i,j=1}^s b_i c_i^q \tilde{a}^{K}_{ij} \alpha^{[K-1,j]} (\pi_\gamma) \\
        &\quad\quad + \sum_{i,j=1}^s b_i c_i^q \tilde{a}^{K}_{ij} \alpha^{[K,j]}(\pi_\gamma) \\
        \shortintertext{using $D(\zeta)$ and $D_{Y_K}(\zeta_K)$, we get}
        \alpha^{[K]} \big( B^+(\bullet^q \gamma) \big) &= \frac{1}{q+1} \sum_{i=1}^s b_i (1 - c_i^{q+1}) \alpha^{[K-1,i]}(\pi_\gamma) - \sum_{i=1}^s b_i c_i^{q+1} Y_{K,q+1} \alpha^{[K-1,i]} (\pi_\gamma) \\
        &\quad\quad + \sum_{i=1}^s b_i c_i^{q+1} Y_{K,q+1} \alpha^{[K,i]}(\pi_\gamma) \\
        \shortintertext{the definition of $\alpha^{[K-1]}$ and $\alpha^{[K]}$ gives}
        \alpha^{[K]} \big( B^+(\bullet^q \gamma) \big)&= \frac{1}{q+1} \big( \alpha^{[K-1]} (\gamma) - \alpha^{[K-1]} (B^+(\bullet^{q+1} \pi_\gamma) \big) \\
        &\quad \quad - Y_{K,q+1} \alpha^{[K-1]} \big( B^+(\bullet^{q+1} \pi_\gamma) \big) + Y_{K,q+1} \alpha^{[K]} \big( B^+(\bullet^{q+1} \pi_\gamma) \big) \\
        \shortintertext{we use induction on the size of $\pi_\gamma$ to get}
        \alpha^{[K]} \big( B^+(\bullet^q \gamma) \big) &= \frac{1}{(q+1)|\gamma|} \omega_{K-1} (\pi_\gamma) - \frac{1}{(q + 1)(|\gamma| + q + 1)} \omega_{K-1} (\bullet^{q+1} \pi_\gamma) \\
        &\quad \quad - \frac{1}{|\gamma| + q + 1} Y_{K,q+1} \omega_{K-1} (\bullet^{q+1} \pi_\gamma) + \frac{1}{|\gamma| + q + 1} Y_{K,q+1} \omega_K (\bullet^{q+1} \pi_\gamma) \\
        \shortintertext{we use the property $\omega_K (\bullet\pi) = \omega_K (\pi)$ for any $\pi$ to obtain}
        \alpha^{[K]} \big( B^+(\bullet^q \gamma) \big) &= \frac{1}{|\gamma| + q + 1} \big( \frac{|\gamma| + q + 1}{(q+1)|\gamma|} \omega_{K-1} (\pi_\gamma) - \frac{1}{q+1} \omega_{K-1} (\pi_\gamma) \\
        &\quad \quad - Y_{K,q+1} \omega_{K-1} (\pi_\gamma) + Y_{K,q+1} \omega_K (\pi_\gamma) \big) \\
        \shortintertext{which reduces to}
        \alpha^{[K]} \big( B^+(\bullet^q \gamma) \big) &= \frac{1}{|\gamma| + q + 1} \big( \frac{1}{|\gamma|} \omega_{K-1} (\pi_\gamma) - Y_{K,q+1} \omega_{K-1} (\pi_\gamma) + Y_{K,q+1} \omega_K (\pi_\gamma) \big)
    \end{align*}
    We obtain \eqref{eq:ai_gen_form} with
    \begin{align*}
        \omega_K (\bullet^q \gamma) &= \omega_K (\gamma) = \frac{1}{|\gamma|} \omega_{K-1} (\pi_\gamma) - Y_{K,q+1} \omega_{K-1} (\pi_\gamma) + Y_{K,q+1} \omega_K (\pi_\gamma), \quad \text{and} \\
        \omega_K (\pi) &= \omega_K (\gamma) \cdot \omega_K (\pi \setminus \gamma),
    \end{align*}
    which proves the statement of the lemma.
\end{proof}

We can now prove the following theorem.

\begin{theorem}
    \label{thm:order_jump}
    Consider SDC with $K+1$ iterations. 
    Let $K^{th}$ iteration of SDC be of order $p_K$. 
    We have $p_{K+1} = \min\{ p_K + 2, p \}$ if one of the following assumptions is satisfied:
    \begin{enumerate}
        \item $p_K < \eta_{K+1}$ and $\bm{A}_\Delta^k$ satisfies $C_{W_k}(\eta_k)$ with
             \[ 1 \leq \eta_k \leq \eta, \quad \eta_{k+1} \leq \eta_k + 1, \quad \text{for } k = 1, \dots, K + 1. \]
             where $W_{K+1,p_K + 1} = \frac{1}{p_K + 1}$.
        \item $\eta_{K+1} \leq p_K < \eta_{K+1} + \zeta_{K+1}$ and $\bm{A}_\Delta^k$ satisfies $C_{W_k}(\eta_k)$ and $D_{Y_k}(\zeta_k)$ with
            \[ 1 \leq \eta_k \leq \eta, \quad \zeta_k \leq \zeta, \quad \eta_{k+1} \leq \eta_k + 1, \quad \zeta_k \leq \eta_k + 1, \quad \text{for } k = 1, \dots, K + 1. \]
            where $W_{K+1,p_K + 1} = \frac{1}{p_K + 1}$ and $Y_{K+1,q} = \frac{1}{p_K + 1}$ for all $q = 1, \dots, \zeta_{K+1}$.
    \end{enumerate}

    In particular, there exists a unique diagonal EED $\bm{A}_\Delta^{k} = \frac{\diag(\mathbf{c})}{2k} \eqcolon \bm{A}_{\Delta_\mathtt{J}}^{k}$ for all iterations $k$ such that the order of SDC increases by $2$ per iteration.
\end{theorem}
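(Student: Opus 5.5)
The plan is to reduce both the order conditions of the SDC method and those of the collocation method to statements about the functional $\omega_{K+1}$ from Lemma \ref{lemma:ai_eta_form} and Lemma \ref{lemma:ai_gen_form}. Since the collocation method $(\bm{A},\bm{b})$ has order $p$, we have $\beta(\tau)=1/\gamma(\tau)$ for $|\tau|\le p$. Hence, for $|\tau|\le p$, the SDC order condition $\alpha^{[K+1]}(\tau)=\beta(\tau)$ is, by \eqref{eq:ai_gen_form}, equivalent to $\omega_{K+1}(\pi)=|\tau|/\gamma(\tau)=1/\gamma(\pi)$, where $\tau=B^+(\pi)$ and $\gamma$ is extended multiplicatively to forests. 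The same rewriting turns the hypothesis ``the $K$-th iteration has order $p_K$'' into $\omega_K(\pi)=1/\gamma(\pi)$ for all forests with $|\pi|\le p_K-1$. By Theorem \ref{thm:order_per_sweep} (in its order version) we already know $p_{K+1}\ge p_K+1$, i.e.\ $\omega_{K+1}(\pi)=1/\gamma(\pi)$ for $|\pi|\le p_K$. It therefore remains to show $\omega_{K+1}(\pi)=1/\gamma(\pi)$ for forests with $|\pi|=p_K+1$, as long as $p_K+2\le p$. The cap $\min\{\cdot,p\}$ appears because $\beta$ and $1/\gamma$ stop agreeing beyond size $p$.

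For a forest of size $p_K+1$ with at least two trees, multiplicativity of $\omega_{K+1}$ reduces the claim to trees of size at most $p_K$, which are already settled. So the essential case is a single tree $\sigma=B^+(\rho)$ with $|\sigma|=p_K+1$ and $|\rho|=p_K$.

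\emph{Case 1} ($p_K<\eta_{K+1}$). Here $|\sigma|\le\eta_{K+1}$, so the recursion in the proof of Lemma \ref{lemma:ai_eta_form} applies and can be rewritten as
\[
\omega_{K+1}(\sigma)=\tfrac{1}{|\sigma|}\,\omega_K(\rho)+W_{K+1,|\sigma|}\bigl(\omega_{K+1}(\rho)-\omega_K(\rho)\bigr).
\]
The quantity $\omega_K(\rho)$ may be wrong, since $|\rho|=p_K$, but $\omega_{K+1}(\rho)=1/\gamma(\rho)$ is correct. Choosing $W_{K+1,p_K+1}=1/(p_K+1)=1/|\sigma|$ makes the unknown $\omega_K(\rho)$ cancel, and we get $\omega_{K+1}(\sigma)=\omega_{K+1}(\rho)/|\sigma|=1/\gamma(\sigma)$. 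Applying Lemma \ref{lemma:ai_gen_form} case 1 then gives $\alpha^{[K+1]}=\beta$ up to size $p_K+2$.

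\emph{Case 2} ($\eta_{K+1}\le p_K<\eta_{K+1}+\zeta_{K+1}$). This case proceeds identically but uses the second branch of Lemma \ref{lemma:ai_gen_form}. For a tree $\gamma$ of size exceeding $\eta_{K+1}$, the recursion is
\[
\omega_{K+1}(\gamma)=\tfrac{1}{|\gamma|}\,\omega_K(\pi_\gamma)+Y_{K+1,q+1}\bigl(\omega_{K+1}(\pi_\gamma)-\omega_K(\pi_\gamma)\bigr).
\]
Setting $Y_{K+1,q}=1/(p_K+1)$ for every $q$ produces the same cancellation, independently of the number $q$ of leaves at the root. The smaller trees attached to the root are handled by $C_{W}$ exactly as in Case 1.

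I expect the main obstacle to be the bookkeeping in this step. One must check that every tree of the relevant size falls into one of the two regimes of Lemma \ref{lemma:ai_gen_form}, and that the inequalities $\eta_{k+1}\le\eta_k+1$ and $\zeta_k\le\eta_k+1$ are exactly what make the induction on $k$ in the lemmas valid. One must also keep track that the off-by-one indices (size of $\sigma$ versus size of $\tau$) match the condition $W_{K+1,p_K+1}$ as stated. Equality $p_{K+1}=p_K+2$, rather than merely $\ge$, should follow from Remark \ref{rmk:unique_EED} and Example \ref{ex:orderJumps}: on the bamboo tree a jump by three generally fails, so the order is not higher than claimed.

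For the diagonal statement, a diagonal $\bm{A}^k_\Delta$ satisfies $C_{W_k}$ for all $q$ only if it has the form $W_k\diag(\bm{c})$ with $W_{k,q}\equiv W_k$, so $\eta_k$ can be taken as large as $\eta$. I would then run the induction $p_k=2k$. With this induction, the required value $W_k=1/(p_{k-1}+1)$ from the argument above should, after aligning the size indexing with the explicit computation in Example \ref{ex:orderJumps} (which gives $\tilde a^1_{ii}=c_i/2$ and $\tilde a^2_{ii}=c_i/4$), reduce to $W_k=1/(2k)$. Uniqueness follows from Remark \ref{rmk:unique_EED}, because already on the bamboo tree the diagonal entries are forced.
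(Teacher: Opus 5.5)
Your treatment of the two cases is the paper's proof. Lemma~\ref{lemma:ai_gen_form} reduces everything to $\omega_{K+1}(\pi)=1/\gamma(\pi)$ for $|\pi|=p_K+1$, and forests with several trees are settled by multiplicativity. For a single tree $B^+(\rho)$, the choice $W_{K+1,p_K+1}=1/(p_K+1)$, resp.\ $Y_{K+1,q}=1/(p_K+1)$, cancels the unknown $\omega_K(\rho)$. Your iteration indices there are in fact cleaner than the paper's.

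The diagonal claim, however, fails as you set it up, and this is not an indexing convention to be aligned later. With the induction $p_k=2k$, the requirement $W_k=1/(p_{k-1}+1)$ gives $W_k=1/(2k-1)$. That is the sequence $\diag(\bm{c})/(2k-1)$ of the middle panel of Fig.~\ref{JumperComparison}, not $\bm{A}^{k}_{\Delta_\mathtt{J}}$. The base case is what is wrong. Here $p_k$ is the order of the quadrature output \eqref{eq:SDC_final}, and with the copied initial value the zeroth iterate is explicit Euler. So $p_0=1$, hence $p_k=2k+1$ and $W_k=1/(p_{k-1}+1)=1/(2k)$. The values $c_i/2$, $c_i/4$ in Example~\ref{ex:orderJumps} are internal-stage computations, where the stage order is $p_k-1=2k$. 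Moreover, Case 1 is unavailable once $p_K\ge\eta$. You must therefore also check that $W\diag(\bm{c})$ satisfies $D_Y(\zeta)$ with $Y_q\equiv W$; indeed $\sum_i b_i c_i^{q-1}\tilde a_{ij}=b_j c_j^{q}W$. This makes $Y_{K+1,q}=1/(p_K+1)$ automatic in Case 2.

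Your step towards exact equality $p_{K+1}=p_K+2$ does not work either. Remark~\ref{rmk:unique_EED} and Example~\ref{ex:orderJumps} concern a diagonal EED on bamboo trees of the internal stages. They give no upper bound for an arbitrary EED satisfying the hypotheses, and in fact no such bound holds. The choice $\bm{A}^{K+1}_\Delta=\bm{A}$ satisfies $C_W(\eta)$ with $W_q=1/q$, so $W_{K+1,p_K+1}=1/(p_K+1)$ holds automatically. Yet it makes iteration $K+1$ reproduce the collocation stages exactly. For example, take $K=0$ ($p_0=1$) and $s\ge2$ Gauss nodes: Case 1 applies with $\eta_1=s$, but $p_1=2s>3=\min\{p_0+2,p\}$. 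So only $p_{K+1}\ge\min\{p_K+2,p\}$ is provable, which is exactly what the paper's proof (and your main argument) delivers. Drop the equality step.
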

\begin{proof}
    To show that $p_{K+1} = p_K + 2$ assuming $p_K + 2 \leq p$, we need to prove for all $\tau \in T$ with $|\tau| \leq p_K + 2$ that 
    \[ \alpha^{[K+1]}(\tau) = \frac{1}{\gamma(\tau)}. \]
    This follows for all trees $\tau \in T$ with $|\tau| \leq p_K + 1$ due to Theorem \ref{thm:order_per_sweep}. It remains to show that the same is true for all trees $\tau$ of size $|\tau| = p_K + 2$. 
    We recall from Lemma \ref{lemma:ai_gen_form} that, given $\tau = B^+(\pi)$ of size $p_K + 2$, we have,
    \[ \alpha^{[K+1]}(\tau) = \frac{1}{|\tau|} \omega_{K+1}(\pi), \]
    and it remains to show that $\omega_{K+1} (\pi) = \frac{1}{\gamma(\pi)}$.
    For $\pi = \gamma_1 \cdots \gamma_n$, we have,
    \[ \omega_{K+1}(\pi) = \omega_{K+1}(\gamma_1) \cdots \omega_{K+1}(\gamma_n). \]
    If $n > 1$, then all trees $\gamma_i$ have size less or equal to $p_K$ and the statement is proved. Assume $n = 1$, then let $\gamma_1$ be denoted by $\gamma$. We note that $|\gamma| = p_K + 1$. If $p_K < \eta_{K+1}$, then, from the proof of Lemma \ref{lemma:ai_eta_form}, we have
    \[ \omega_K (\gamma) :=\frac{1}{|\gamma|} \omega_{K-1} (\pi_\gamma) - W_{K,p_K + 1} \omega_{K-1} (\pi_\gamma) + W_{K,p_K + 1} \omega_K (\pi_\gamma) = \frac{1}{|\gamma|} \omega_{K + 1}(\pi_\gamma),\]
    where $\gamma = B^+(\pi_\gamma)$. 
    If $\eta_{K+1} \leq p_K < \eta_{K+1} + \zeta_{K+1}$, then, according to the proof of Lemma \ref{lemma:ai_gen_form} and depending on the structure of $\gamma$, we either apply Lemma \ref{lemma:ai_eta_form}, or there exists some $q \in {1, \dots, \zeta_{K+1}}$ such that
    \[ \omega_{K+1} (\gamma) = \frac{1}{|\gamma|} \omega_K (\pi_\gamma) - Y_{K+1,q} \omega_K (\pi_\gamma) + Y_{K+1,q} \omega_{K+1} (\pi_\gamma) = \frac{1}{|\gamma|} \omega_{K + 1}(\pi_\gamma). \]
    Since $|\pi_\gamma| = p_K$, $\omega_{K+1} (\pi_\gamma) = \frac{1}{\gamma(\pi_\gamma)}$ and we obtain
    \[ \alpha^{[K+1]} (\tau) = \frac{1}{|\tau|} \frac{1}{|\gamma|} \frac{1}{\gamma(\pi_\gamma)} = \frac{1}{\gamma(\tau)}, \]
    and the statement is proved.
\end{proof}

\begin{corollary}
    \label{corr:MIN-SR-NS_jump}
    Choosing the $\bm{A}_{\Delta_{\mathtt{MIN-SR-NS}}}^k= \diag(c_i/s)$ introduced in  \cite{CaklovicEtAl2025} for all $k$ results in an SDC method that performs an order jump on $(s-1)^{th}$ iteration.
\end{corollary}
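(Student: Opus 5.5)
The plan is to treat $\bm{A}_{\Delta_{\mathtt{MIN-SR-NS}}}^k=\diag(\bm{c})/s$ as a special case of Theorem \ref{thm:order_jump}, case 1. A diagonal EED $\diag(\bm{c})W$ satisfies $C_{W}(\eta_k)$ for every $\eta_k$ with the constant sequence $W_q = W$. Indeed, $\sum_j \tilde{a}_{ij}c_j^{q-1} = W c_i\, c_i^{q-1} = c_i^q W$. Here $W_{k,q}=1/s$ for all $k,q$, so I am free to pick $\eta_k$. For the collocation methods considered (Gauss, Radau IIA, Lobatto IIIA), Figure \ref{fig:simplifying_assumptions} gives $\eta = s$. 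I would therefore take $\eta_k = s$ for all $k$, which trivially satisfies $1\le \eta_k\le \eta$ and $\eta_{k+1}\le \eta_k+1$.

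The key condition of case 1 at the jumping iteration $K+1$ is $W_{K+1,p_K+1} = \frac{1}{p_K+1}$, i.e.\ $\frac1s = \frac{1}{p_K+1}$, so $p_K = s-1$. This must hold together with $p_K<\eta_{K+1}=s$, which it does. Thus an order jump occurs exactly at the first iteration $K+1$ for which the previous order equals $s-1$.

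The remaining step is to locate that iteration. By Corollary \ref{corr:hk=K} we have $p_K\ge K$. I would argue that before the jump the order is exactly $K$, so $p_K=K$. For $K < s-1$, the weight $1/s\neq 1/(K+1)$ fails the matching condition. Using the explicit recursion for $\omega_K$ from the proof of Lemma \ref{lemma:ai_eta_form} on the bamboo tree of size $K+1$ (or via Remark \ref{rmk:unique_EED}, uniqueness of the diagonal EED giving a jump), I would show the bamboo of size $K+2$ is not matched, so no earlier jump happens. Hence $p_{s-2}=s-2$. Then the $(s-1)^{th}$ iteration has $p_{s-2}=s-2$, which does not yet match $s-1$. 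This signals an index bookkeeping issue I must resolve.

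Here the paper indexes iterations so that "the $(s-1)^{th}$ iteration performs the jump" means the step from $p=s-1$ to $p=\min(s+1,p)$. Equivalently, with $\eta$-conventions where $p_K$ counts the initialization, the order before the jump is $s-1$. I would check this against the tables (e.g.\ Gauss $s=3$: orders $1,2,4$) and fix the convention accordingly.

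The main obstacle is this off-by-one matching, together with showing that no spurious earlier jump occurs. The latter amounts to a careful evaluation of $\omega_K$ on bamboo trees, where $\omega_K(\text{bamboo}_{m}) = \frac{1}{m}\omega_{K-1} - \frac1s\omega_{K-1}+\frac1s\omega_K$ evaluated on the bamboo of size $m-1$. I expect an induction giving $\omega_K = 1/\gamma$ for sizes $\le K$, with a mismatch at size $K+1$ proportional to $(\frac1{K+1}-\frac1s)$, which vanishes only when $K+1=s$.
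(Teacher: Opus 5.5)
\paragraph{Gap: the iteration at which the jump occurs.} Your specialization is the right one, and it is how the corollary follows from Theorem \ref{thm:order_jump}. The EED $\diag(\bm{c})/s$ satisfies $C_{W}(\eta_k)$ with $W_{k,q}=1/s$ for all $q$. Every collocation method satisfies $C(s)$, so $\eta_k=s$ is admissible. Case 1 then applies at iteration $K+1$ exactly when $\frac1s=\frac{1}{p_K+1}$, i.e.\ $p_K=s-1$.

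The gap is in locating that $K$. You claim the order before the jump is exactly $K$, so $p_{s-2}=s-2$. This is false for the output \eqref{eq:SDC_final}. Taken at face value it would put the jump at iteration $s$ and contradict the statement, and reinterpreting the indexing (``$p_K$ counts the initialization'') does not repair this. The missing observation is that the quadrature output is one order better than the internal stages. Starting from $\alpha^{[0,i]}=0$, part 2 of Theorem \ref{thm:order_per_sweep} gives internal stages of height order $K$ after $K$ iterations. Part 1 then gives $h_K\ge K+1$, hence $p_K\ge K+1$; the bound of Corollary \ref{corr:hk=K} is not sharp here. Your own Gauss check already shows this: $1,2,4$ are $p_0,p_1,p_2$, so $p_1=2=K+1$.

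\paragraph{Closing the argument.} You still need $p_K=K+1$ exactly for $K\le s-2$, i.e.\ no earlier jump. Your bamboo computation gives this once you link the internal stages to the output at the right size. Let $\theta_m$ be the bamboo with $m$ vertices and set $r_K:=\omega_K(\theta_{K+1})-1/\gamma(\theta_{K+1})$. Since $\omega_K(\theta_K)=1/\gamma(\theta_K)$, the recursion in the proof of Lemma \ref{lemma:ai_eta_form} gives $r_K=(\frac{1}{K+1}-\frac1s)\,r_{K-1}$ with $r_0=-1$. Hence $r_K=-\prod_{k=1}^{K}(\frac{1}{k+1}-\frac1s)$, which is nonzero for $K\le s-2$ and vanishes at $K=s-1$.

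By $B(K+2)$, $\alpha^{[K]}(\theta_{K+2})-\beta(\theta_{K+2})=\sum_i b_i c_i^{K+1}r_K=r_K/(K+2)$. So the first unmatched output tree has size $K+2$, not $K+1$. Therefore $p_{s-2}=s-1$, and case 1 of Theorem \ref{thm:order_jump} at $K=s-2$ gives $p_{s-1}=\min\{s+1,p\}$.

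\paragraph{The tables.} The Radau IIA and Lobatto tables (pattern $1,\dots,s-2,s$) do not point to a shifted convention either. For $s\ge 3$ they show order $1$ after one iteration, which part 1 of Theorem \ref{thm:order_per_sweep} rules out for \eqref{eq:SDC_final}. So they use the last stage $u_n^{[K,s]}$ as output, whose order is the internal-stage order $K$. There, $r_{s-1}=0$ shows that the internal stages jump from $s-2$ to $s$, again at iteration $s-1$.
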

\begin{remark}
Note that the prove of Theorem~\ref{thm:order_jump} only holds if $\bm{A}$ is a collocation method. 
However, numerical experiments not reported here suggest that many Runge–Kutta methods can serve as the underlying method. 
A possible explanation is that most high-order Runge–Kutta methods are constructed using simplifying assumptions analogous to $B(p), C(\eta), D(\zeta)$.
\end{remark}

\section{Stability and numerical experiments}\label{sec:numexp}
This section addresses numerically the stability properties of the new SDC methods and the convergence of the iterations themselves towards underlying RKM as $k \to \infty$, outside the asymptotic limit $\Delta t \to 0$.
We also confirm numerically the order jumps of the SDC iterations as analysed in Section \ref{sec:order_jumps} by considering the linear Dahlquist's test equation
\begin{equation}
	u' = \lambda u, \qquad u(t_n) = u_n, \qquad t \in [t_n, t_{n+1}],
	\label{Dahlquist}
\end{equation}
with $\lambda \in \mathbb{C}$ and $Re(\lambda) \leq 0$ and by considering the nonlinear Euler's rigid body dynamics \cite{Vilmart2015}
\begin{equation}
	\label{eq:euler}
	\frac{1}{D_1 D_2 D_3} \nabla C \times \nabla H = \begin{pmatrix}
	\dot{Y}_1 \\
	\dot{Y}_2 \\
	\dot{Y}_3
	\end{pmatrix} = \begin{pmatrix}
	Y_2Y_3 \\
	Y_1Y_3 \\
	-Y_1Y_2
	\end{pmatrix}, \, \bm{Y}_0 = \begin{pmatrix}1/\sqrt{3} \\ 1 \\0 \end{pmatrix}, \, t \in [t_0 = 0, t_{end} = 10],
\end{equation}
where $D_1 = N_2 - N_3$, $D_2 = N_3 - N_1$, $D_3 = N_2 - N_1$, $C = 0.5(N_1D_1Y_1^2 + N_2D_2 Y_2^2 + N_3D_3Y_3^2)$ is the Casimir, $H = 0.5(D_1 Y_1^2 + D_2 Y_2^2 + D_3 Y_3^2)$ the Hamiltonian, $Y_i = \sqrt{N_i/D_i}\tilde{Q} A_i$ a normalised angular acceleration and $N_n$ are constants satisfying $N_1 < N_3 < N_2$ and $\tilde{Q}= \sqrt{D_1D_2D_3/(N_1N_2N_3)}$. 
We set $N_1 = 1$, $N_2 = 3$ and $N_3 = 2$. 
The initial conditions in the non-normalised amplitudes $A_i$ are given by $(1,1,0)^\intercal$.

\subsection{Stability and convergence in the limit $k \to \infty$}
To analyse convergence of SDC towards the collocation solution outside the asymptotic limit we establish some preliminaries. 
Consider an SDC method $(\bm{A}_\Delta^0, \ldots ,\bm{A}^K_\Delta, \bm{A}, \bm{b}, \bm{c})$.
Let $\bm{u}$ denote the stages of the collocation problem~\eqref{ButcherForm} and $\bm{u}^k$ the SDC stages after $k$ iterations.
The error $\bm{e}^{k} := \bm{u}^k - \bm{u}$ is governed by 
\begin{equation}
\bm{e}^{k+1} = \bm{B}^k(z) \bm{e}^k =  \bm{B}^k \bm{e}^{k} = \bm{B}^k \dotsm \bm{B}^1 \bm{e}^1
\label{errorEvolution}
\end{equation}
 with $\bm{B}^k(z) = z(\bm{I} - z\bm{A}{_\Delta^k})^{-1}(\bm{A}- \bm{A}_\Delta^k)$.  $\bm{B}^k(z)$ is called the iteration matrix. If it is stationary, i.e. $\bm{B}^k(z) = \bm{B}(z)$ and its spectral radius is $<1$ the method $(\bm{A}_\Delta, \bm{A}, \bm{b}, \bm{c})$ converges towards the collocation solution of $(\bm{A}, \bm{b}, \bm{c})$. In the non stationary case we need another measure of convergence.

Let $ \Sigma_K := \left\{ \bm{B}^K(z) \dotsm \bm{B}^1(z) : \bm{B}^i(z) \in \Sigma \right\} $, where $\Sigma$ is the set of all possible iteration matrices, and let $\hat{\rho}_K (\Sigma, \Vert \cdot \Vert)\coloneq \sup\{\Vert \tilde{\bm{B}} \Vert^{\frac{1}{K}}: \tilde{\bm{B}} \in \Sigma_K \}$. 
Define the joint spectral radius~\cite{rota1960}
\begin{equation*}
	\hat{\rho}(\Sigma) \coloneq \lim_{k \to \infty} \hat{\rho}_k (\Sigma, \Vert \cdot \Vert).
\end{equation*}
By Theorem 1 in \cite{Berger1992} \eqref{errorEvolution} will converge for an arbitrary $\tilde{\bm{B}}$ if and only if $\hat{\rho}(\Sigma) < 1$. 
Due to the supremum in $\hat{\rho}_k$ it is the worst case growth rate of \eqref{errorEvolution}. 
However, in most cases we know $\tilde{\bm{B}}$ beforehand and, thus, we can relax the convergence criterion as follows.
Let $\bar{\rho}_k (\tilde{\bm{B}}, \Vert \cdot \Vert)\coloneq \{\Vert \tilde{\bm{B}} \Vert^{\frac{1}{k}}: \tilde{\bm{B}} \in \Sigma_k \}$ and define the growth rate to be 
\begin{equation*}
	\bar{\rho}(\tilde{\bm{B}}) \coloneq \lim_{k \to \infty} \bar{\rho}_k (\tilde{\bm{B}}, \Vert \cdot \Vert).
\end{equation*}
Hence,  \eqref{errorEvolution} converges if $\bar{\rho}(\tilde{\bm{B}}) < 1$ or if $\bar{\rho}_k(\tilde{\bm{B}}, \Vert \cdot \Vert) = 0$ for $k \leq K$. Note, the similarity of $\bar{\rho}_k (\tilde{\bm{B}}, \Vert \cdot \Vert)$ and Equation 4.4 in \cite{Weiser2014}.

We analyse convergence of the SDC method $(\bm{A}_{\Delta_\mathtt{J}}^1, \dotsc, \bm{A}_{\Delta_\mathtt{J}}^K, \bm{A}, \bm{b}, \bm{c})$ outside the asymptotic limit. The iteration matrix in iteration $k$ reads
\begin{equation*}
\bm{B}^k = z\left(\bm{I} - \frac{z}{2k} \bm{A}_{\Delta}^{\text{IP}}\right)^{-1}\left(\bm{A} - \frac{1}{2k} \bm{A}_{\Delta}^{\text{IP}}\right)
\end{equation*}
and 
\begin{equation*}
\lim_{k \to \infty} \bm{B}^k = z\bm{A}.
\end{equation*}
Hence, for a fixed $z$, the iteration matrix $\bm{B}^k$ approaches the iteration matrix of Picard iterations in the limit $k \to \infty$. 
Since the Picard iterations define an explicit RKM , we can not expect the method $(\bm{A}_{\Delta_\mathtt{J}}^1, \dotsc, \bm{A}_{\Delta_\mathtt{J}}^K, \bm{A}, \bm{b}, \bm{c})$ to converge for $\vert z \vert \gg 1$ and large $K$. 
Fig. \ref{fig:stabilityInfty} compares $\bar{\rho}_k$ and stability domains of SDC methods using either $\bm{A}_{\Delta_{\mathtt{J}}}^k$ or $\bm{A}_{\Delta_{\mathtt{P}}}^k$. 
The stability domain of $\bm{A}_{\Delta_{\mathtt{J}}}^k$ shrinks with increasing $k$.  
This is in contrast to implicit-explicit SDC for which stability regions increase with more iterations~\cite{RuprechtSpeck2016}.
 \begin{figure}[t]
 \begin{minipage}[t]{.59\textwidth}
 \centering
\includegraphics{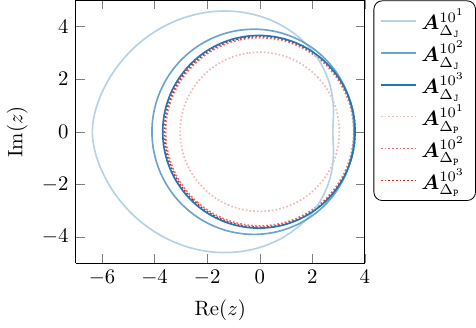}
\end{minipage} \begin{minipage}[t]{.39\textwidth}
 \centering
\includegraphics{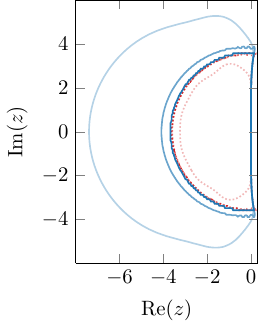}
\end{minipage}
\caption{$M=3$ Radau IIA nodes were used for the figure. \textbf{Left}: $\bar{\rho}_k(\tilde{\bm{B}}, \Vert \cdot \Vert_\infty) \leq 1$ inside enclosed contour lines for $k=10^1, 10^2, 10^3$ and $(\bm{A}_{\Delta_{\mathtt{J}}}^k, \bm{A}, \bm{b}, \bm{c})$ (blue solid lines) as well as Picard iterations $(\bm{A}_{\Delta_{\mathtt{P}}}^k, \bm{A}, \bm{b}, \bm{c})$ (dotted red lines). \textbf{Right}: Stability domain for $(\bm{A}_{\Delta_{\mathtt{J}}}^k, \bm{A},  \bm{b},\bm{c})$ (blue solid lines) and $(\bm{A}_{\Delta_{\mathtt{P}}}^k, \bm{A}, \bm{b}, \bm{c})$ (red dotted lines) after $k=10^1, 10^2, 10^3$ iterations.  Stability domain is inside enclosed contour lines. For both plots the contours of $(\bm{A}_{\Delta_{\mathtt{J}}}^k, \bm{A}, \bm{b}, \bm{c})$ approach the contours of $(\bm{A}_{\Delta_{\mathtt{P}}}^k, \bm{A}, \bm{b}, \bm{c})$ with increasing iteration count $k$. }
\label{fig:stabilityInfty}
\end{figure}

In the stiff limit we find  $\lim_{z \to \infty} \bm{e}^{k+1} = \bm{B}_{\text{S}}^k \bm{e}^{k}$ with
\begin{equation*}
	 \lim_{z \to \infty} \bm{B}(z) = \lim_{z \to \infty} \frac{z}{z} \left(\left(\frac{1}{z}\bm{I} - \bm{A}{_\Delta^k}\right)^{-1}\right)\left(\bm{A}- \bm{A}_\Delta^k\right) = \bm{I} - (\bm{A}_\Delta^k)^{-1}\bm{A} \eqcolon \bm{B}_{\text{S}}^k.
\end{equation*}
\cite{CaklovicEtAl2025} find that using $M$ iterations of $\bm{A}_{\Delta_{\mathtt{MIN-SR-FLEX}}}$ in the stiff limit results in a nilpotent iteration matrix and, hence, convergence towards the collocation solution is ensured. Note, that their proof of Theorem 2.12 in \cite{CaklovicEtAl2025}, does not depend on the order of $\bm{A}_\Delta^k$. This allows us to ensure convergence for $\bm{A}_{\Delta_{\mathtt{J}}}$ (and other EED's) as follows.   
\begin{proposition}
	 Let $A_\Delta = \{\bm{A}_\Delta^0, \dots, \bm{A}_\Delta^K: \bm{A}_\Delta^i \in \diag{(\frac{\bm{c}}{c})} \}$ with $c \in \mathbb{R}\setminus \{0\}$ and let ${A}_{\Delta_{\mathtt{MIN-SR-FLEX}}} = \{ \diag(\frac{\bm{c}}{1}), \dots,  \diag(\frac{\bm{c}}{M})\}$ such that ${A}_{\Delta_{\mathtt{MIN-SR-FLEX}}} \subseteq A_\Delta$.  Consider the SDC method $(A_\Delta, \bm{A}, \bm{b}, \bm{c})$ then $\lim_{z \to \infty }\bm{e}^{K+1} = 0$.
\label{prop:zeroerror}
\end{proposition}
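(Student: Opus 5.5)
In the stiff limit the error recursion \eqref{errorEvolution} becomes $\bm{e}^{K+1} = \bm{B}_{\text{S}}^K \cdots \bm{B}_{\text{S}}^1 \bm{e}^1$, where $\bm{B}_{\text{S}}^k = \bm{I} - (\bm{A}_\Delta^k)^{-1}\bm{A}$. Every EED in $A_\Delta$ is diagonal with nonzero entries $c_i/c$, assuming the nodes satisfy $c_i \neq 0$ so that $(\bm{A}_\Delta^k)^{-1}$ exists. The plan is therefore to show that the product of stiff iteration matrices over the iterations in $A_\Delta$ is the zero matrix. We do this by exploiting the nilpotency result of \cite{CaklovicEtAl2025} (Theorem 2.12) for the MIN-SR-FLEX sequence $\diag(\bm{c}/1),\dots,\diag(\bm{c}/M)$. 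As noted before the statement, that proof does not use any order property of the EED.

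\textbf{Step 1.} Compute the stiff iteration matrices explicitly. For $\bm{A}_\Delta^k = \diag(\bm{c})/m$ we have $\bm{B}_{\text{S}}^k = \bm{I} - m\,\diag(\bm{c})^{-1}\bm{A} =: \bm{I} - m\bm{G}$, with $\bm{G} := \diag(\bm{c})^{-1}\bm{A}$. All these matrices are polynomials in the single matrix $\bm{G}$, so they commute. Hence the product $\prod_k \bm{B}_{\text{S}}^k$ does not depend on the order in which the EEDs of $A_\Delta$ are applied.

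\textbf{Step 2.} Recall the key fact behind Theorem 2.12 of \cite{CaklovicEtAl2025}: for a collocation matrix, $\bm{G}$ has eigenvalues $1/1,\dots,1/M$. To see this, use the polynomial basis: by $C(M)$, $\bm{A}\bm{c}^{q-1} = \bm{c}^q/q$, so $\bm{G}\bm{c}^{q-1} = \bm{c}^{q-1}/q$ for $q=1,\dots,M$. The vectors $\bm{c}^{0},\dots,\bm{c}^{M-1}$ form a Vandermonde basis, so $\bm{G}$ is diagonalizable with eigenvalue $1/q$ on $\bm{c}^{q-1}$. Consequently $\bm{I} - q\bm{G}$ annihilates $\bm{c}^{q-1}$. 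Since all the factors commute, $\prod_{q=1}^M (\bm{I} - q\bm{G})$ annihilates every basis vector, and is therefore zero. This is the nilpotency statement.

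\textbf{Step 3.} Conclude. Because ${A}_{\Delta_{\mathtt{MIN-SR-FLEX}}} \subseteq A_\Delta$, the product $\bm{B}_{\text{S}}^K\cdots \bm{B}_{\text{S}}^1$ contains every factor $\bm{I} - q\bm{G}$ for $q=1,\dots,M$. By commutativity we can regroup it as (remaining factors)$\cdot\prod_{q=1}^M(\bm{I}-q\bm{G})$, which is zero. Hence $\lim_{z\to\infty}\bm{e}^{K+1}=0$.

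The main obstacle is justifying that the limit $z\to\infty$ may be taken factor by factor. Each $\bm{B}^k(z)\to\bm{B}_{\text{S}}^k$ holds because $(\bm{A}_\Delta^k)^{-1}$ exists, and a finite product of convergent matrices converges to the product of the limits. This also requires that $\bm{e}^1$ remain bounded as $z\to\infty$; if needed we absorb the first iteration into the product starting from $\bm{e}^0$. The remaining care point is the requirement $c_i\neq 0$: it fails for Lobatto nodes, where $c_1=0$. In that case one restricts to the nonzero nodes, or notes that the first stage is exact. I would state this caveat explicitly.
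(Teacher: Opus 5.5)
Your proposal is correct and follows the paper's route. Like the paper, you take the stiff limit factor by factor to get $\lim_{z\to\infty}\bm{e}^{K+1} = \bm{B}_{\text{S}}^K\cdots\bm{B}_{\text{S}}^1\bm{e}^1$, and then show the product vanishes via the nilpotency argument of Theorem 2.12 in \cite{CaklovicEtAl2025}. You make that argument explicit through the eigenvectors $\bm{c}^{q-1}$ of $\diag(\bm{c})^{-1}\bm{A}$, which come from $C(M)$. Your commutativity observation (every factor is a polynomial in $\diag(\bm{c})^{-1}\bm{A}$) is what justifies the paper's word ``analogous'' when the MIN-SR-FLEX factors are interleaved with other diagonal EEDs. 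Your caveat about $c_1=0$ addresses a point the paper leaves implicit.
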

\begin{proof}
In the stiff limit we find $\lim_{z \to \infty} \bm{e}^{K+1} =  \bm{B}_{\text{S}}^K  \dotsm \bm{B}_{\text{S}}^1 \bm{e}^{1} = \tilde{\bm{B}}_{\text{S}} \bm{e}^{1}$. Hence, we require $\tilde{\bm{B}}_{\text{S}} = 0$ if $\bm{e}^{1} \neq 0$. The proof that indeed $ \tilde{\bm{B}}_{\text{S}} = 0$ works analogous to the proof of Theorem 2.12. in \cite{CaklovicEtAl2025}.
\end{proof}

\begin{corollary}
For the method $(A_\Delta, \bm{A}, \bm{b}, \bm{c})$ it holds that $\bar{\rho}_k(\tilde{\bm{B}}_\text{S}, \Vert \cdot \Vert) = 0$.
\end{corollary}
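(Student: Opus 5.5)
The plan is to read the corollary off Proposition~\ref{prop:zeroerror}. The proposition gives more than $\lim_{z\to\infty}\bm{e}^{K+1}=0$: its proof shows that the stiff-limit product $\tilde{\bm{B}}_{\text{S}}=\bm{B}_{\text{S}}^K\dotsm\bm{B}_{\text{S}}^1$ is the zero matrix. Once that is established, every norm gives $\Vert\tilde{\bm{B}}_{\text{S}}\Vert=0$, so $\bar{\rho}_K(\tilde{\bm{B}}_{\text{S}},\Vert\cdot\Vert)=0^{1/K}=0$. This is exactly the second convergence criterion stated before Proposition~\ref{prop:zeroerror}.

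\textbf{Step 1: the factors commute.} I would make the nilpotency argument from \cite{CaklovicEtAl2025} explicit. Every EED in $A_\Delta$ has the form $\diag(\bm{c})/c$ with $c\neq 0$. Hence each stiff iteration matrix is
\[
\bm{B}_{\text{S}}^k=\bm{I}-c_k\,\bm{Q},\qquad \bm{Q}:=\diag(\bm{c})^{-1}\bm{A}.
\]
This requires $c_i\neq 0$, which is the same invertibility already implicit in $\bm{B}_{\text{S}}^k$. All factors are therefore polynomials in the single matrix $\bm{Q}$, so they commute and can be reordered freely.

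\textbf{Step 2: the eigenvalues of $\bm{Q}$.} Using the collocation property $C(s)$, $\bm{A}$ maps the vector of values of a polynomial $p$ of degree $<s$ at the nodes to the values of $\int_0^{x}p$ at the nodes. For the monomial $p(x)=x^{q-1}$ with $q=1,\dots,s$ this gives
\[
\bm{Q}\,\bm{c}^{q-1}=\bm{c}^{q-1}/q .
\]
So $\bm{Q}$ has the $s=M$ distinct eigenvalues $1,1/2,\dots,1/M$, with eigenvectors the node power vectors $\bm{c}^{q-1}$ (componentwise powers). In particular $\bm{Q}$ is diagonalizable.

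\textbf{Step 3: the product vanishes.} Since ${A}_{\Delta_{\mathtt{MIN-SR-FLEX}}}\subseteq A_\Delta$, the product contains the factors $\bm{I}-m\bm{Q}$ for every $m=1,\dots,M$. The factor $\bm{I}-m\bm{Q}$ annihilates the eigenvector belonging to $1/m$. Commuting each such factor to the front, the whole product annihilates every eigenvector of $\bm{Q}$. Because these eigenvectors form a basis, $\tilde{\bm{B}}_{\text{S}}=0$. The remaining factors with other constants $c$ do not matter, since they commute with the annihilating ones.

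\textbf{Step 4: larger $k$.} For any $k\geq K$ the product of the first $k$ stiff matrices contains the zero product as a factor, so it is also zero and $\bar{\rho}_k=0$ for all such $k$. Consequently the limit $\bar{\rho}(\tilde{\bm{B}}_{\text{S}})$ is $0$ as well.

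\textbf{Main obstacle.} The step needing most care is the precise meaning of the symbol $\bar{\rho}_k$, which in the paper is written as a set. I would interpret it as the value $\Vert\tilde{\bm{B}}_{\text{S}}\Vert^{1/k}$ for the given product, with $k$ at least the index by which all MIN-SR-FLEX factors have appeared. For smaller $k$ the claim fails in general, because not all eigenvalues of $\bm{Q}$ have yet been annihilated. I would therefore state explicitly that $k\geq K$.
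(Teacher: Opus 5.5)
Your proposal is correct and matches the paper's argument. The corollary is read off from $\tilde{\bm{B}}_{\text{S}}=0$ in Proposition~\ref{prop:zeroerror}. Your eigen-decomposition of $\diag(\bm{c})^{-1}\bm{A}$ via $C(s)$ and Vandermonde eigenvectors is exactly the Theorem 2.12 nilpotency argument of \cite{CaklovicEtAl2025}, which the paper cites without writing out. Your caveats ($c_i\neq 0$, and $k$ large enough that every MIN-SR-FLEX factor has entered the product) sharpen the paper's loosely stated assumptions rather than change the route.
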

Note, that if the assumptions in proposition \ref{prop:zeroerror} hold $\bar{\rho}(\tilde{\bm{B}}_\text{S}) = 0$ in the limit $k \to \infty $. Hence, despite the equality of  $\bm{A}_{\Delta_{\mathtt{J}}}^k$ and  $\bm{A}_{\Delta_{\mathtt{P}}}^k$ in the stiff limit, we can, using the approach above, ensure convergence towards the collocation solution in the stiff limit (see Fig. \ref{fig:rzinfty}).

 \begin{figure}[t]
 \begin{minipage}[t]{.49\textwidth}
 \centering
\includegraphics{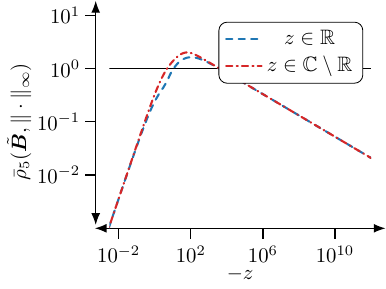}
\end{minipage} \begin{minipage}[t]{.49\textwidth}
 \centering
\includegraphics{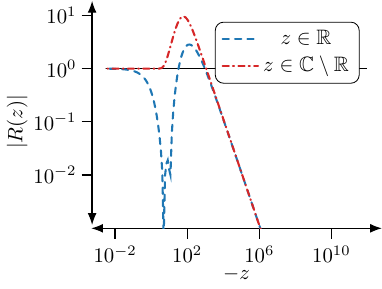}
\end{minipage}
\caption{SDC method  $(\bm{A}_{\Delta_\text{J}}, \bm{A}_{\Delta_\text{J}}, \bm{A}_{\Delta_\text{J}}, \diag(\bm{c}), \diag(\bm{c}/3), \bm{A}, \bm{b}, \bm{c})$ with $M=5$ and RadauIIA nodes. \textbf{Left}: $\bar{\rho}_5(\tilde{\bm{B}}, \Vert \cdot \Vert_\infty)$ along the real (dashed, blue) and imaginary (dash dotted, red) axes.  \textbf{Right}: $|R(z)|$ along the real (dashed, blue) and imaginary (dash dotted, red) axes. 
For both plots the graphs approach zero with increasing $z$, which is to be expected by proposition \ref{prop:zeroerror}.}
\label{fig:rzinfty}
\end{figure}

\subsection{Stability and convergence for a fixed number $k$ of iterations}
Fig. \ref{fig:linConvergence} confirms that  $\bm{A}_{\Delta_{\mathtt{J}}}^k$ reaches the expected convergence orders for up to 5 iterations (10th order) for linear and non-linear problems. 
 \begin{figure}[h]
 \begin{minipage}[t]{.49\textwidth}
 \centering
\includegraphics{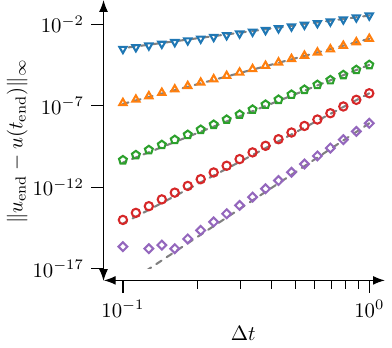}
\end{minipage} \begin{minipage}[t]{.49\textwidth}
 \centering
\includegraphics{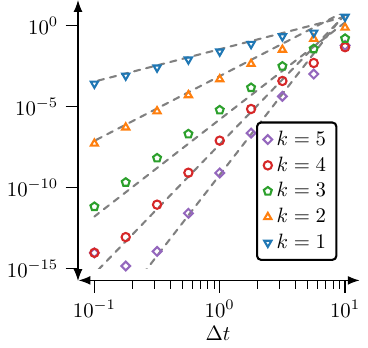}
\end{minipage}
\caption{Numerical convergence of SDC with $M=6$ Radau IIA nodes and the EED introduced in Theorem \ref{thm:order_jump}. The last node was used as the solution in each timestep. The grey dashed lines serve as a guide to the eye with slopes of 2, 4, 6, 8 and 10. The method shows the expected 2nd, 4th, 6th, 8th, and 10th order convergence for $k=1,2,3,4,5$ iterations (see \ref{thm:order_jump}). \textbf{Left}: Dahlquist test equation with $t_0=0$, $t_{\text{end}}=1$, $\lambda = -1$ and $u_0=1$.  The data point of $k=5$ close to $\Delta t = 10^{-1}$ that appears to be missing is below the scale shown. Within this area the error of the method with $k=5$ is close to machine precision, which is why slope appears to be 0. \textbf{Right}: Euler rigid body equations. The implementation using SciPy \texttt{fsolve} limits the precision to $\sim 10^{-14}$. See Fig. \ref{table:JUMPERRADAU}.}
\label{fig:linConvergence}
\end{figure}

Figure \ref{JumperComparison} shows stability regions for a selection of parallel EEDs using 5 and 3 Radau IIA nodes. Exclusively using $\bm{A}_{\Delta_{\mathtt{J}}}$ yields methods that are only suited to non-stiff problems.  However, even for non-stiff problems the methods will be expensive due to the implicit solves. To overcome these limitations we need to mix different EEDs. If we include $\bm{A}_\Delta^1 = \diag(\bm{c})$ and then proceed with  $\bm{A}_\Delta^k= \diag(\bm{c})/(2k-1)$ we can see that the stability regions are larger compared to the previous case. However, the stability regions get smaller with increasing $k$. Along the imaginary axis the method won't be stable for $k \geq 3$ and, hence, is not suited to integrate hyperbolic problems. The best results we could find for parallel SDC use $s$ iterations of $\bm{A}_{\Delta_{\mathtt{MIN-SR-FLEX}}}$, which results in a nilpotent iteration matrix in the stiff limit, and then proceed with $\bm{A}_\Delta = \diag{\bm{c}}/(2k-s)$ as in the third example on the right hand side of Fig. \ref{JumperComparison}. This approach often results in L-stable methods for more than just 2 subsequent iterations using  $\bm{A}_{\Delta_{\mathtt{J}}}$, nonetheless, this is not always the case and requires further studying. For serial SDC, using  $\bm{A}_{\Delta_{\mathtt{J}}}$, followed by $\bm{A}_{\Delta_{\mathtt{LU}}}$ allows even more order jumps without sacrificing stability. These results show that combining different EEDs can result in useful methods that include order jumps and, hence, reduce computational work. 

\begin{figure}[h]
\centering
\begin{minipage}[t]{.42\textwidth}
\flushright 
\includegraphics{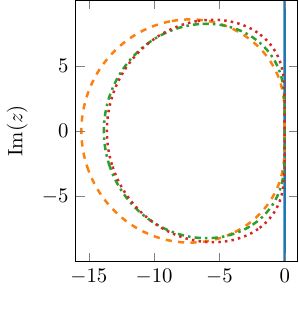} 
\end{minipage} \begin{minipage}[t]{.19\textwidth}
\centering 
\includegraphics{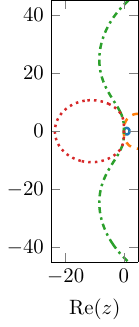}
\end{minipage} \begin{minipage}[t]{.29\textwidth}
\flushleft 
\includegraphics{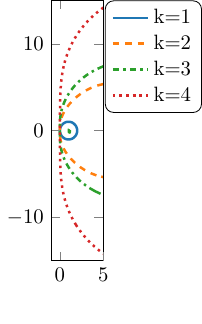}
\end{minipage}
\caption{Stability regions for different SDC methods using $s=5$ (left, middle) and $s=3$  (right) Radau IIA nodes. A method is either stable within its corresponding enclosed contour or left of a non enclosed contour. \textbf{Left}: SDC using $\bm{A}_\Delta^k= \diag(\bm{c})/(2k)$. $k=1$ shows the stability region of the trapezoidal rule. Despite the implicit nature of the methods, the stability regions for $k>1$ are comparable to that of explicit methods due to the closed contour lines in the left half of the complex plane. The convergence order after $k=1,2,3,4$ is 2, 4, 6, 8, respectively. \textbf{Middle}: SDC using$\bm{A}_\Delta^1= \diag(\bm{c})$ and $\bm{A}_\Delta^k= \diag(\bm{c})/(2k-1)$ for $k \in \{2,3,4\}$. $k=1$ shows the contour of the backward Euler method. $k=2 $ is A- and L-stable, $k=3$ is L($\alpha \approx 67.57^\circ$)-stable (less suited to hyperbolic problems), $k=4$ has a larger stability region compared to methods with $k>1$ on the left hand side but looks like an explicit method due to the closed contour in the left half of the complex plane. Note that $k=1,2$ are barely visible as the contours open to the right half of the complex plane. The convergence order after $k=1,2,3,4$ is 1, 3, 5, 7, respectively. \textbf{Right}: SDC using $\bm{A}_{\Delta_{\mathtt{MIN-SR-FLEX}}}^k$ for $k \leq s$ and $\bm{A}_\Delta^k= \diag(\bm{c})/(2k-3)$ for $k =4$. $k=1$ shows the contour of the backward Euler method. All iterations are L-stable. The convergence order after $k=1,2,3,4$ is 1, 2, 3, 5, respectively. The three plots do not share the same axis.} 
\label{JumperComparison}
\end{figure}
Note, that if an EED that does order jumps is built such that it is a lower triangular matrix, there are many more degrees of freedom which may allow for fine tuning the convergence properties discussed above. While parallel EEDs have a unique set of coefficients on the diagonal, non-parallel EEDs allow to freely choose $n-1$ coefficients for the $n^{\text{th}}$ node. Exploring these types of lower triangular EEDs is left for future work.

\subsection{An S-conservative SDC method}
In \cite{DelBuono2002} an approach to conserve quadratic invariants within arbitrary RKM of order $\geq 2$ is outlined. 
We will use their approach to enforce conservation of a single quadratic quantity in arbitrary SDC methods. 
Note that while the underlying collocation method can possess properties like symmetry or even symplecticity that ensure conservation of certain properties, SDC typically does not inherit these properties~\cite{WinkelEtAl2015}.
More precisely we want $u_{n+1}^\intercal S u_{n+1} = u_{n}^\intercal S u_{n}$, where $S \in \mathbb{R}^{q \times q}$ is a symmetric Matrix. 
This property is called S-conservative and requires $m_{ij} = b_i a_{ij} + b_j a_{ji} - b_ib_j = 0$ which a RKM in general violates.
To make SDC S-conservative, we change~\eqref{ButcherForm} to
\begin{equation}
	u_{n+1} = u_n + \Delta t \gamma_n \sum_{i=1}^s b_i f(t_n + c_i \Delta t, u_n^{[i]} ),
	\label{GRKM}
\end{equation}
with 
\begin{equation}
	\gamma_n = \frac{2 \sum_{ij}^s b_i a_{ij} \langle S f_i, f_j\rangle}{\sum_{ij}^s b_i b_{j} \langle S f_i, f_j\rangle}.
	\label{gamma}
\end{equation}
where  $ \langle \cdot, \cdot \rangle$ defines some inner product~\cite{DelBuono2002,Ketcheson2019,Ranocha2020a}. 
If the SDC method is of order $p$ the solution of the relaxation RKM will be of order $p-1$ if interpreted as a numerical approximation of $u(t_n +  \Delta t)$ and of order $p$ if interpreted as a numerical approximation of $u(t_n + \gamma_n \Delta t )$~ \cite[Thm 3 and Cor 5]{Ketcheson2019}). 
In order to conserve the quadratic Hamiltonian $H$ of~\eqref{eq:euler}, we set
\begin{equation*}
	S = \frac{1}{2}\begin{pmatrix}
	D_1 & 0 & 0 \\
	0 & D_2 & 0 \\
	0 & 0 & D_3
	\end{pmatrix}
\end{equation*}
and calculate $\gamma_n$ according to \eqref{gamma}. 
Fig~\ref{fig:conservation} (left) shows that SDC with relaxation conserves the $H$ up to machine precision. 
Fig~\ref{fig:conservation} (right) shows the global error over time for standard SDC and SDC with relaxation.
For standard SDC the error grows linearly at first and then at around $t=40$ starts to grow proportional to $t^2$, in line with the analysis by e.g. \cite{Calvo2011,Ranocha2020b}.
By contrast, for SDC with relaxation the error growth only linearly with $t$ and remains small even for simulations over very long times.
\begin{figure}[t]
 \begin{minipage}[t]{.49\textwidth}
 \centering
\includegraphics{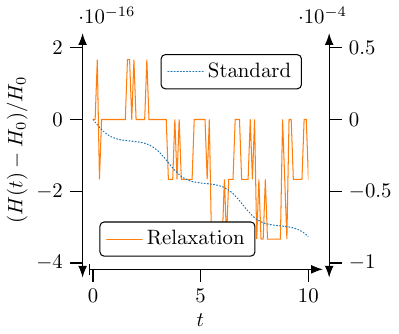}
\end{minipage} \begin{minipage}[t]{.49\textwidth}
 \centering
\includegraphics{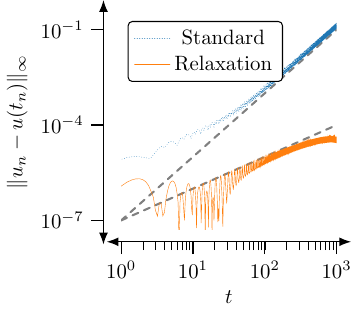}
\end{minipage}
\caption{SDC with $s=3$ Gauss Legendre nodes with $K=2$ iterations using an explicit Euler EED $\bm{Q}_{\Delta_{\text{E}}}$. \textbf{Left}: The blue dotted line shows the normalised Hamiltonian without relaxation and corresponds to the axis on the right. The orange line shows the normalised Hamiltonian with relaxation and corresponds to the axis on the left.  \textbf{Right}: Global error of the method with (orange) and without (blue, dotted) relaxation with $t_{end}=10^3$.}
\label{fig:conservation}
\end{figure}
Projection methods~\cite{Ranocha2020a} could be used to build SDC methods that also conserve non-quadratic quantities but this is left for future work.

\section{Summary}\label{sec:conclusions}
We provide a new derivation of spectral deferred corrections using the perspective of a Runge-Kutta method applied to a partitioned initial value problem.
This allows to formulate a wide range of different flavours of SDC methods as RKM represented by Butcher tables and to analyze their convergence properties using the theory of B-series and the methodology of simplifying assumptions for order conditions.
A general proof is given that any error equation discretization or ``sweeper'' in SDC parlance increases the overall order of the method by at least one, independent of the underlying nodes.
Importantly, the proof covers non-stationary EEDs where the sweeper changes with every iteration which is required for optimal convergence in some parallelizable SDC variants.
Our framework also extends to EEDs that are algebraically motivated and are not consistent discretizations of the error equation.
We provide a rigorous explanation of when parallel SDC methods can benefit from order jumps and deliver an increase of two orders in one iteration, a phenomenon that was observed numerically before but not yet explained theoretically.
We also investigated the stability properties of the new methods and showed how the interpretation of SDC as RKM allows to construct variants that conserve quadratic invariants.
It would be interesting to investigate in future work if the favourable stability properties of the underlying RKM could be preserved by the iterations of the SDC combined with order jumps in SDC iterations.

\bmhead{Acknowledgements}
JF and DR thankfully acknowledge funding from the German Federal Ministry for Research, Technology and Aeronautics (BMFTR) under grant 16ME0679K. Supported by the European
Union - NextGenerationEU.
EB and GV thankfully acknowledge the support of the Swiss National Science Foundation, projects No 200020\_214819, and No. 200020\_192129. EB thankfully acknowledges the support of the Knut and Alice Wallenberg Foundation (grant number KAW 2023.0433).

\begin{appendices}


\section{Order of various SDC methods}\label{secA2}
\begin{figure}[h]
	\centering
	\bgroup
	\def\arraystretch{1}%
	\begin{tabular}{c||c|c|c|c|c|c|c|c|c|c|c|c|c|c|c|}
		 $s$ $\backslash$ $k$ & 1 & 2 & 3 & 4 & 5 & 6 & 7 & 8 & 9 & 10 & 11 & 12 & 13 & 14 & 15 \\
		\hline
		\hline
		2 & $\bm{2}$ & 2 & 2 & 2 & 2 & 2 & 2 & 2 & 2 & 2 & 2 & 2 & 2 & 2 & 2 \\
		\hline
		3 & $\bm{2}$ & $\bm{4}$ &  4 & 4 & 4 & 4 & 4 & 4 & 4 & 4 & 4 & 4 & 4 & 4 & 4\\
		\hline
		4 & $\bm{2}$ & $\bm{4}$ & 4 & $\bm{6}$ & 6 & 6 & 6 & 6 & 6 & 6 & 6 & 6 & 6 & 6 & 6 \\
		\hline
		5 & $\bm{2}$ & $\bm{4}$ & 4 & $\bm{6}$ & 6 & $\bm{8}$ & 8 & 8 & 8 & 8 & 8 & 8 & 8 & 8 & 8 \\
		\hline
		6 & $\bm{2}$ & $\bm{4}$ & 4 & $\bm{6}$ & 6 & $\bm{8}$ & 8 &$\bm{10}$&10&10&10&10&10&10&10 \\
		\hline
		7 & $\bm{2}$ & $\bm{4}$ & 4 & $\bm{6}$ & 6 & $\bm{8}$ & 8 &$\bm{10}$&11&12&12&12&12&12&12 \\
		\hline
		8 & $\bm{2}$ & $\bm{4}$ & 4 & $\bm{6}$ & 6 & $\bm{8}$ & 9 &10&$\bm{12}$&13&14&14&14&14&14 \\
		\hline
	\end{tabular}
	\egroup
	\caption{Convergence order of SDC using $\bm{Q}_\Delta^{T}$, $s$ Lobatto nodes (lines) and $k$ iterations (columns). Thick numbers indicate order jumps.}
	\label{table:TRAPLOBATTO}
\end{figure}

\begin{figure}[h]
	\centering
	\bgroup
	\def\arraystretch{1}%
	\begin{tabular}{c||c|c|c|c|c|c|c|c|c|c|c|c|c|c|c|}
		 $s$ $\backslash$ $k$ & 1 & 2 & 3 & 4 & 5 & 6 & 7 & 8 & 9 & 10 & 11 & 12 & 13 & 14 & 15 \\
		\hline
		\hline
		1 & 2 & 2 & 2 & 2 & 2 & 2 & 2 & 2 & 2 & 2 & 2 & 2 & 2 & 2 & 2 \\
		\hline
		2 & $\bm{3}$ & 4 & 4 & 4 & 4 & 4 & 4 & 4 & 4 & 4 & 4 & 4 &4 &4 & 4 \\
		\hline
		3 & $\bm{3}$ & 4 &  5 & 6 & 6 & 6 & 6 & 6 & 6 & 6 & 6 & 6 & 6 & 6 & 6\\
		\hline
		4 & $\bm{3}$ & 4 & 5 & 6 & 7 & 8 & 8 & 8 & 8 & 8 & 8 & 8 & 8 & 8 & 8 \\
		\hline
		5 & $\bm{3}$ & 4 & 5 & 6 & 7 & 8 & 9 &10&10&10&10&10&10&10&10 \\
		\hline
		6 & $\bm{3}$ & 4 & 5 & 6 & 7 & 8 & 9 &10&$\bm{12}$&12&12&12&12&12&12 \\
		\hline
		7 & $\bm{3}$ & 4 & 5 & 6 & 7 & 8 &$\bm{10}$&11&12&13&14&14&14&14&14 \\
		\hline
		8 & $\bm{3}$ & 4 & 5 & 6 & 7 & $\bm{9}$ &10&$\bm{12}$&13&14&15&16&16&16&16 \\
		\hline
	\end{tabular}
	\egroup
	\caption{Convergence order of SDC using $\bm{Q}_\Delta^{T}$, $s$ Gauss nodes (lines) and $k$ iterations (columns). Thick numbers indicate order jumps.}
	\label{table:TRAPGAUSS}
\end{figure}

\begin{figure}[h]
	\centering
	\bgroup
	\def\arraystretch{1}%
	\begin{tabular}{c||c|c|c|c|c|c|c|c|c|c|c|c|c|c|c|}
		 $s$ $\backslash$ $k$ & 1 & 2 & 3 & 4 & 5 & 6 & 7 & 8 & 9 & 10 & 11 & 12 & 13 & 14 & 15 \\
		\hline
		\hline
		1 & 2 & 2 & 2 & 2 & 2 & 2 & 2 & 2 & 2 & 2 & 2 & 2 & 2 & 2 & 2 \\
		\hline
		2 & $\bm{3}$ & 4 & 4 & 4 & 4 & 4 & 4 & 4 & 4 & 4 & 4 & 4 &4 &4 & 4 \\
		\hline
		3 & 2 & $\bm{4}$ &  5 & 6 & 6 & 6 & 6 & 6 & 6 & 6 & 6 & 6 & 6 & 6 & 6\\
		\hline
		4 & 2 & 3 & $\bm{5}$ & 6 & 7 & 8 & 8 & 8 & 8 & 8 & 8 & 8 & 8 & 8 & 8 \\
		\hline
		5 & 2 & 3 & 4 & $\bm{6}$ & 7 & 8 & 9 &10&10&10&10&10&10&10&10 \\
		\hline
		6 & 2 & 3 & 4 & 5 & $\bm{7}$ & 8 & 9 &10&11&12&12&12&12&12&12 \\
		\hline
		7 & 2 & 3 & 4 & 5 & 6 & $\bm{8}$ & 9 &10&11&12&13&14&14&14&14 \\
		\hline
		8 & 2 & 3 & 4 & 5 & 6 & 7 & $\bm{9}$ &10&11&12&13&14&15&16&16 \\
		\hline
	\end{tabular}
	\egroup
	\caption{Convergence order of SDC using $\bm{A}_{\Delta_{\mathtt{MIN-SR-NS}}}^k$, $s$ Gauss nodes (lines) and $k$ iterations (columns). Thick numbers indicate order jumps.}
	\label{table:MINGAUSS}
\end{figure}

\begin{figure}[h]
	\centering
	\bgroup
	\def\arraystretch{1}%
	\begin{tabular}{c||c|c|c|c|c|c|c|c|c|c|c|c|c|c|c|}
		 $s$ $\backslash$ $k$ & 1 & 2 & 3 & 4 & 5 & 6 & 7 & 8 & 9 & 10 & 11 & 12 & 13 & 14 & 15 \\
		\hline
		\hline
		1 & 1 & 1 & 1 & 1 & 1 & 1 & 1 & 1 & 1 & 1 & 1 & 1 & 1 & 1 & 1 \\
		\hline
		2 & $\bm{2}$ & 3 & 3 & 3 & 3 & 3 & 3 & 3 & 3 & 3 & 3 & 3 & 3 &3 & 3 \\
		\hline
		3 & 1 & $\bm{3}$ & 4 & 5 & 5 & 5 & 5 & 5 & 5 & 5 & 5 & 5 & 5 & 5 & 5\\
		\hline
		4 & 1 & 2 & $\bm{4}$ & 5 & 6 & 7 & 7 & 7 & 7 & 7 & 7 & 7 & 7 & 7 & 7 \\
		\hline
		5 & 1 & 2 & 3 & $\bm{5}$ & 6 & 7 & 8 & 9 & 9 & 9 & 9 & 9 & 9 & 9 & 9  \\
		\hline
		6 & 1 & 2 & 3 & 4 & $\bm{6}$ & 7 & 8 & 9 &10&11&11&11&11&11&11 \\
		\hline
		7 & 1 & 2 & 3 & 4 & 5 & $\bm{7}$ & 8 & 9 &10&11&12&13&13&13&13 \\
		\hline
		8 & 1 & 2 & 3 & 4 & 5 & 6 & $\bm{8}$ & 9 &10&11&12&13&14&15&15 \\
		\hline
	\end{tabular}
	\egroup
	\caption{Convergence order of SDC using $\bm{A}_{\Delta_{\mathtt{MIN-SR-NS}}}^k$, $s$ Radau IIA nodes (lines) and $k$ iterations (columns). Thick numbers indicate order jumps.}
	\label{table:MINRADAU}
\end{figure}

\begin{figure}[h]
	\centering
	\bgroup
	\def\arraystretch{1}%
	\begin{tabular}{c||c|c|c|c|c|c|c|c|c|c|c|c|c|c|c|}
		 $s$ $\backslash$ $k$ & 1 & 2 & 3 & 4 & 5 & 6 & 7 & 8 & 9 & 10 & 11 & 12 & 13 & 14 & 15 \\
		\hline
		\hline
		2 & $\bm{2}$ & 2 & 2 & 2 & 2 & 2 & 2 & 2 & 2 & 2 & 2 & 2 & 2 & 2 & 2 \\
		\hline
		3 & 1 & $\bm{3}$ &  4 & 4 & 4 & 4 & 4 & 4 & 4 & 4 & 4 & 4 & 4 & 4 & 4\\
		\hline
		4 & 1 & 2 & $\bm{4}$ & 5 & 6 & 6 & 6 & 6 & 6 & 6 & 6 & 6 & 6 & 6 & 6 \\
		\hline
		5 & 1 & 2 & 3 & $\bm{5}$ & 6 & 7 & 8 & 8 & 8 & 8 & 8 & 8 & 8 & 8 & 8 \\
		\hline
		6 & 1 & 2 & 3 & 4 & $\bm{6}$ & 7 & 8 & 9 &10&10&10&10&10&10&10 \\
		\hline
		7 & 1 & 2 & 3 & 4 & 5 & $\bm{7}$ & 8 & 9 &10&11&12&12&12&12&12 \\
		\hline
		8 & 1 & 2 & 3 & 4 & 5 & 6 & $\bm{8}$ & 9 &10&11&12&13&14&14&14 \\
		\hline
	\end{tabular}
	\egroup
	\caption{Convergence order of SDC using $\bm{A}_{\Delta_{\mathtt{MIN-SR-NS}}}^k$, $s$ Lobatto nodes (lines) and $k$ iterations (columns). Thick numbers indicate order jumps.}
	\label{table:MINLOBATTO}
\end{figure}

\begin{figure}[h]
	\centering
	\bgroup
	\def\arraystretch{1}%
	\begin{tabular}{c||c|c|c|c|c|c|c|c|c|c|c|c|c|c|c|}
		 $s$ $\backslash$ $k$ & 1 & 2 & 3 & 4 & 5 & 6 & 7 & 8 & 9 & 10 & 11 & 12 & 13 & 14 & 15 \\
		\hline
		\hline
		1 & $ 1 $ & 1 & 1 & 1 & 1 & 1 & 1 & 1 & 1 & 1 & 1 & 1 & 1 & 1 & 1 \\
		\hline
		2 & $\bm{2}$ & 3 & 3 & 3 & 3 & 3 & 3 & 3 & 3 & 3 & 3 & 3 & 3 & 3 & 3 \\
		\hline
		3 & $\bm{2}$ & $\bm{4}$ &  5 & 5 & 5 & 5 & 5 & 5 & 5 & 5 & 5 & 5 & 5 & 5 & 5\\
		\hline
		4 & $\bm{2}$ & $\bm{4}$ & $\bm{6}$ & 7 & 7 & 7 & 7 & 7 & 7 & 7 & 7 & 7 & 7 & 7 & 7 \\
		\hline
		5 & $\bm{2}$ & $\bm{4}$ & $\bm{6}$ & $\bm{8}$ & 9 & 9& 9 & 9 & 9 & 9 & 9 & 9 & 9 & 9 & 9 \\
		\hline
		6 & $\bm{2}$ & $\bm{4}$& $\bm{6}$ & $\bm{8}$ & $\bm{10}$ &11&11&11&11&11&11&11&11&11&11 \\
		\hline
		7 & $\bm{2}$ & $\bm{4}$ & $\bm{6}$ & $\bm{8}$ & $\bm{10}$ & $\bm{12}$ & 13 & 13 &13&13&13&13&13&13&13 \\
		\hline
		8 & $\bm{2}$ & $\bm{4}$ & $\bm{6}$ & $\bm{8}$ & $\bm{10}$ & $\bm{12}$ & $\bm{14}$ & 15 &15&15&15&15&15&15&15 \\
		\hline
	\end{tabular}
	\egroup
	\caption{Convergence order of SDC using $\bm{A}_{\Delta_{\mathtt{Jumper}}}^k$, $s$ Radau IIA nodes (lines) and $k$ iterations (columns). Thick numbers indicate order jumps.}
	\label{table:JUMPERRADAU}
\end{figure}




\end{appendices}


\clearpage
\bibliography{sdc,refs}

\end{document}